\newcommand{\Real}{\mathbb{R}}
\newcommand{\N}{\mathbb{N}}
\newcommand{\Z}{\mathbb{Z}}
\newcommand{\F}{\mathbb{F}}
\newcommand{\Matrix}[1]{\mathbf{#1}}
\newcommand{\trans}{\intercal}
\newcommand{\diag}[2][]{\mathrm{diag}#1( #2 #1)}
\newcommand{\Norm}[3][]{#1\| #2 #1\|_{\mathrm{#3}}}
\newcommand{\hap}{\mathbin{\odot}}
\newcommand{\krp}{\mathbin{\otimes}}
\newcommand{\vect}{\mathrm{vec}}
\newcommand{\Vect}[2][]{\vect#1( #2 #1)}
\newcommand{\One}{\Matrix{1}}
\newcommand{\Layer}[1]{\mathsf{#1}}
\newcommand{\Id}{\Layer{Id}}
\newcommand{\dist}[3][]{\rho_{\mathrm{#2}} #1( #3 #1)}
\newcommand{\cond}[3][]{\kappa_{\mathrm{#2}} #1( #3 #1)}
\newcommand{\condabs}[3][]{\tilde{\kappa}_{\mathrm{#2}} #1( #3 #1)}
\newcommand{\fl}[2][]{\mathsf{fl}#1( #2 #1)}
\newcommand{\uh}{\mathsf{u}}
\newcommand{\Oh}[2][]{\mathcal{O}#1( #2 #1)}
\newcommand{\set}[3][]{#1\{ #2 : #3 #1\}}
\newcommand{\supp}[2][]{\mathrm{supp}#1( #2 #1)}
\newtheorem{theorem}{Theorem}[section]
\newtheorem{proposition}[theorem]{Proposition}%
\newtheorem{lemma}[theorem]{Lemma}
\newtheorem{model}[theorem]{Model}
\numberwithin{equation}{section}
\begin{document}

\newif\ifpreprint
\preprinttrue

\ifpreprint
\input{preprint_maketitle}
\else
\fi

\DOI{DOI HERE}
\copyrightyear{2021}
\vol{00}
\pubyear{2021}
\access{Advance Access Publication Date: Day Month Year}
\appnotes{Paper}
\copyrightstatement{Published by Oxford University Press on behalf of the Institute of Mathematics and its Applications. All rights reserved.}
\firstpage{1}


\title[Numerical stability analysis of large language models]{Numerical stability analysis of large language models}

\author{Stanislav Budzinskiy \ORCID{0000-0001-7983-7900}
\address{\orgdiv{Faculty of Mathematics}, \orgname{University of Vienna}, \orgaddress{\street{Kolingasse 14-16}, \postcode{1090}, \state{Vienna}, \country{Austria}}}}

\author{Wenyi Fang and Longbin Zeng
\address{\orgname{Huawei Technologies}}}

\author{Philipp Petersen* \ORCID{0000-0003-3566-1020}
\address{\orgdiv{Faculty of Mathematics and Research Network Data Science @ Uni Vienna}, \orgname{University of Vienna}, \orgaddress{\street{Kolingasse 14-16}, \postcode{1090}, \state{Vienna}, \country{Austria}}}}

\corresp[*]{Corresponding author: \href{email:philipp.petersen@univie.ac.at}{philipp.petersen@univie.ac.at}}

\received{Date}{0}{Year}
\revised{Date}{0}{Year}
\accepted{Date}{0}{Year}



\abstract{
Transformers are the state-of-the-art architecture for large language models, and a key to their scalability is the strategic usage of low-precision arithmetic.
We develop a mixed-precision analysis of transformer inference, deriving bounds for the condition numbers and forward error of the architecture's constituent parts.
Notably, we compare the numerical stability of LayerNorm and RMSNorm in the massive-outlier regime, tighten the error bound of softmax in the presence of attention sinks, and quantify the impact of its shifted evaluation on the sensitivity to perturbations.
Furthermore, we derive novel sequence-length-independent bounds on the local Lipschitz constant of self-attention.
Our worst-case error bound for transformer inference suggests that its numerical stability is determined by the interplay between weight magnitude and the growth of the residual stream.
Crucially, and as validated by experiments with GPT-2, our analysis establishes that the scaling of residual-projection weights preserves the propagation of the relative rounding error unless it forces a qualitative transition in the dynamics of the residual stream.
}

\keywords{deep learning; transformer; inference; numerical stability; mixed precision; condition number.}

\maketitle


\section{Introduction}
\emph{Transformers} \citep{vaswani2017attention} constitute a family of neural-network architectures that serves as the backbone for \emph{large language models} (LLMs).
While specific transformer architectures have evolved over time (see Section~\ref{subsec:transformer_architecture}), their core design principle persists: this is a deep composition of alternating \emph{attention} and \emph{feedforward} (FF) blocks.
Originally tailored for natural language processing \citep{devlin2019bert}, transformers have been successfully adapted to tasks ranging from computer vision \citep{dosovitskiy2021image} to computational biology \citep{ying2021transformers, lin2023evolutionary} and mathematical reasoning \citep{frieder2023mathematical}.
This versatility stems largely from the scaling of models to colossal numbers of parameters.

The practical realisation of massive LLMs is enabled by the strategic usage of low-precision number formats for both training and inference \citep{micikevicius2018mixed, nagel2021white}.
Consider model deployment.
First, weight-only quantisation, implemented as quantisation-aware training \citep{jacob2018quantization} or post-training quantisation \citep{frantar2023gptq, lin2024awq}, reduces the disk-storage requirements of the deployed model.
Second, weight-activation quantisation, using zero-shot \citep{dettmers2022gpt3, rouhani2023microscaling} or calibration-based \citep{xiao2023smoothquant} methods, optimises the runtime accelerator-memory demands.
Third, mixed-precision \emph{general matrix multiplication} (GEMM) kernels exploit hardware parallelism and cache locality to maximise inference throughput, relying on quantised low-precision operands \citep{sze2017efficient, markidis2018nvidia}.

Despite these substantial efficiency gains, quantisation introduces numerical perturbations into the GEMM operands, which propagate through the entire transformer architecture and are compounded by the rounding errors of \emph{floating-point} (FP) operations.
The established practical heuristic, exemplified by frontier LLMs \citep{liu2024deepseek}, prescribes that while quantisation can be aggressive, both intermediate GEMM accumulation and the evaluation of non-linear functions require higher bit-widths.
The extended dynamic range of wider number formats prevents swamping and overflows \citep{micikevicius2018mixed, colbert2024accumulator}, and their increased precision preserves the training and predictive accuracies \citep{kim2021bert, shah2024flashattention, qi2025defeating}.

While the impact of quantisation and rounding errors in transformers has been a subject of extensive empirical studies, theoretical analyses remain scarce---our work addresses this deficit.

\subsection{Contributions and outline}
We develop a numerical analysis of transformer inference, bounding the condition numbers and forward error of its constituent parts.
Our rounding error analysis is deterministic \citep{higham2002accuracy} and mixed-precision, distinguishing between quantisation, accumulation, residual, and working precisions.

We provide a numerical-stability perspective on the transition from \emph{LayerNorm} \citep{ba2016layer} to \emph{RMSNorm} \citep{zhang2019root} in modern LLMs, focusing on the regime of \emph{massive outliers}.
While this architectural shift was introduced to reduce the computational complexity of normalisation, a comparison of condition numbers shows that neither function is strictly superior in terms of sensitivity.
Meanwhile, the forward error of LayerNorm is governed by the magnitude of the outlier's background entries, whereas RMSNorm exhibits unconditional forward stability.
See Section~\ref{sec:layernorm}.

The rounding error analysis of FF blocks in Section~\ref{sec:ffn} shows that the FP evaluation of both \emph{GELU} \citep{hendrycks2016gaussian} and \emph{SwiGLU} \citep{shazeer2020glu} mechanisms is forward stable.
We bound the condition numbers for general FF blocks with smooth ReLU-like activation (or gating) functions.

In Section~\ref{sec:softmax}, we focus on the practical aspects of evaluating \emph{softmax}.
Our analysis of the unshifted and shifted algorithms shows that the shift, while preventing overflow, at most quadruples the sensitivity to perturbations.
For concentrated softmax probability distributions, we prove a rounding-error bound that takes absorption effects into account and scales with the \emph{effective} sequence length in long-context scenarios; this is particularly relevant in the presence of \emph{attention sinks} \citep{xiao2024efficient}.

Section~\ref{sec:attention} is devoted to \emph{self-attention}.
We derive a collection of bounds on its local Lipschitz constant that are independent of the input sequence length, improving on the mean-field bound from \cite{castin2024smooth} in the discrete case.\footnote{We independently developed these bounds during the revision of our paper. After finalising the analysis, we became aware of concurrent work \cite{emadi2026exact} deriving a special case.}
An even smaller bound holds in the presence of a massive outlier.

The forward error for the entire deep transformer architecture is analysed in Section~\ref{sec:transformer}.
Our bound suggests that the numerical stability of inference is determined not only by the magnitude of weights---depth-dependent scaling at initialisation has become standard since GPT-2 \citep{radford2019language}---but also by the growth dynamics of the residual stream.
Crucially, the relative-error bound is invariant to the scaling of residual-projection weights when the residual stream responds linearly to this scaling, a mathematical property we validate empirically via experiments with GPT-2.

Notation and background material on transformers, condition numbers, and rounding error analysis are provided in Section~\ref{sec:background}.
Appendix~\ref{appendix:llm_comparison} describes the evolution of architectural details across generations of LLMs, and Appendix~\ref{appendix:layernorm-cond-comparison} contains auxiliary formulas for Section~\ref{sec:layernorm}.

\subsection{Limitations}
First, our derivation focuses on a representative transformer architecture.
Although the analysis is more broadly applicable,\footnote{For example, all bounds hold for grouped-query and multi-query attention \citep{shazeer2019fast, ainslie2023gqa}.} we introduce simplifying structural assumptions to maintain clarity of exposition: we exclude parallel architectures \citep{chowdhery2023palm, malartic2024falcon2} in favour of a standard sequential architecture; exclude all bias terms; exclude sparse \citep{beltagy2020longformer} and latent \citep{liu2024deepseek} attention mechanisms in favour of standard causal self-attention; and exclude key-value caching \citep{dai2019transformer}.
The architecture we study is a decoder-only deep transformer with pre-normalisation, GELU or SwiGLU FF mechanism, and causal multi-head self-attention with \emph{RoPE} \citep{su2024roformer}.
Find a detailed description of the architecture in Section~\ref{subsec:transformer_architecture}.

Second, we adopt deterministic rounding \citep{higham2002accuracy}.
Although the worst-case error bounds it yields are conservative compared to average-case bounds of stochastic rounding \citep{croci2022stochastic}, the deterministic setting allows for tractable error analysis of non-trivial compositions.
Rigorous stochastic-rounding error analysis of nonlinear functions remains in the early stages of development \citep{el2024bounds}, necessitating auxiliary probabilistic assumptions to make it feasible \citep{beuzeville2026deterministic}.

Third, we treat quantisation errors strictly as deterministic rounding errors to abstract away specific quantisation schemes \citep{gholami2022survey}---such as outlier-aware integer quantisation \citep{dettmers2022gpt3} or micro-scaled FP quantisation \citep{rouhani2023microscaling}---and thereby facilitate the analysis.
A more fine-grained analysis could be performed for any particular quantisation scheme using specialised techniques \citep{abdelfattah2025analysis}.

Fourth, we assume that the four precisions are fixed throughout the entire transformer.
That is, each mixed-precision GEMM rounds all operand entries to the same quantisation precision and accumulates every inner product in the same accumulation precision; likewise, each nonlinearity is evaluated in the same working precision and every residual update is rounded to the same residual precision.
While this is a standard theoretical assumption, we note that there exist featurewise mixed-precision quantisation schemes \citep{dettmers2022gpt3} in practice, whereas the precisions used to accumulate inner products and evaluate nonlinearities remain fixed in inference kernels.\footnote{In a follow-up paper \citep{budzinskiy2026lamp}, which appeared after the first version of the present work was made public, we investigate adaptive mixed-precision accumulation for transformer inference.}

\subsection{Related work}
The generalisation properties of a neural network and its robustness with respect to input perturbations can be characterised via the Lipschitz constants of its layers: the smaller, the better \citep{bartlett2017spectrally, cisse2017parseval, weng2018evaluating, gouk2021regularisation}.
Meanwhile, there are investigations into the robustness-expressivity trade-off, as the representation power of a neural network with small Lipschitz constant may be limited in some cases \citep{tsipras2018robustness, anil2019sorting, bethune2022pay}.

Upper bounds for the local Lipschitz constant of self-attention were derived in \cite{kim2021lipschitz, castin2024smooth, yudin2025pay}.
For \emph{residual} architectures, including transformers, the (upper bound of the) Lipschitz constant of the neural network necessarily grows with depth as the product of Lipschitz constants of its residual blocks.
Modified residual connections can mitigate the exponential growth of the Lipschitz constant, e.g., by proper weight normalisation \citep{bachlechner2021rezero, wang2024deepnet, newhouse2025training}.
When the residual connections are excluded from a transformer, its Lipschitz constant decays rapidly with depth, leading to rank collapse and a loss of expressivity \citep{dong2021attention}.
Similarly, improper weight scaling, or lack thereof, not only causes the Lipschitz constant to explode exponentially but can also lead to the same rank collapse \citep{noci2022signal}.

In the context of adversarial stability, backward error analysis of FF networks was used to design adversarial attacks \citep{beuzeville2021adversarial, beerens2024adversarial}.
This rounding error analysis of inference was further extended to stochastic rounding in \cite{beuzeville2026deterministic} and motivated a mixed-precision accumulation strategy in \cite{el2025mixed}.

The propagation of quantisation errors, treated as random variables, was analysed theoretically in \cite{lin2016fixed, sakr2017analytical} with a focus on signal-to-noise ratio and classification-mismatch probability, respectively.
The propagation was explicitly taken into account for layer-wise post-training quantisation in \cite{nagel2020up, arai2025quantization}.
In \cite{malinovskii2025higgs}, the expected perplexity of a neural network was bounded via the expected relative quantisation errors of its weights.
\section{Background and preliminaries}
\label{sec:background}

\subsection{Notation}
\label{subsec:notation}
We denote vectors and matrices by $\Matrix{x} \in \Real^n$ and $\Matrix{Y} \in \Real^{m \times n}$.
The $j$th entry of $\Matrix{x}$ will be written as $x_j$, the $j$th column of $\Matrix{Y}$ as $\Matrix{y}_j$, and the $(i,j)$th entry of $\Matrix{Y}$ as $y_{i,j}$.
For an index set $J \subseteq \{ 1, \ldots, n \}$, we denote by $\Matrix{x}_{J}$ and $\Matrix{Y}_{J}$ the restriction of $\Matrix{x}$ and $\Matrix{Y}$ to the entries and columns indexed by $J$, respectively, preserving their natural order.
We will write $[i,j] = \{ i, i + 1, \ldots, j \}$ for $i \leq j$.
We denote by $\Matrix{e}_1, \ldots, \Matrix{e}_m \in \Real^m$ the columns of the identity matrix $\Matrix{I}_m \in \Real^{m \times m}$ and let $\One = \sum_{i = 1}^{m} \Matrix{e}_i$ be the vector of all ones (the lengths of $\Matrix{e}_i$ and $\One$ will be deducible from the context). 
The absolute value of a vector or matrix is understood in the entrywise sense.
The Hadamard product of matrices is denoted by $\hap$ and the Kronecker delta by $\delta_{i,j}$.

For each $1 \leq p,q \leq \infty$, the $\ell_p$ vector norm will be written as $\Norm{\cdot}{p}$ and the matrix $\ell_p \to \ell_q$ operator norm as $\Norm{\cdot}{p,q}$.
Recall that $\Norm{\Matrix{Y}}{2,2}$ is the largest singular value of $\Matrix{Y}$. 
Less known are the equalities
\begin{equation*}
    \Norm{\Matrix{Y}}{1,q} = \max_{\Matrix{x} \in \{ \Matrix{e}_1, \ldots, \Matrix{e}_n\} } \Norm{\Matrix{Y} \Matrix{x}}{q} = \max_{1 \leq j \leq n} \Norm{\Matrix{y}_j}{q}, \quad \Norm{\Matrix{Y}}{\infty,q} = \max_{\Matrix{x} \in \{ -1,1 \}^n} \Norm{\Matrix{Y} \Matrix{x}}{q},
\end{equation*}
which follow from maximising a convex function on a polytope.
It holds that $\Norm{\Matrix{Y}^\trans}{p,q} = \Norm{\Matrix{Y}}{q*, p*}$ with $1/q + 1/q* = 1/p + 1/p* = 1$ \cite[Theorem~5.6.35]{horn2012matrix}.
With an abuse of notation, we define $\Norm{\Matrix{x}}{-\infty} = \min_{1 \leq i \leq d} |x_i|$.
The Kronecker product $\Matrix{Y} \krp \Matrix{Z} \in \Real^{mk \times nl}$ with $\Matrix{Z} \in \Real^{k \times l}$ is defined as
\begin{equation*}
    \Matrix{Y} \krp \Matrix{Z} = \begin{bmatrix}
        y_{1,1} \Matrix{Z} & \cdots & y_{1,n} \Matrix{Z} \\
        \vdots & \ddots & \vdots \\
        y_{m,1} \Matrix{Z} & \cdots & y_{m,n} \Matrix{Z} \\
    \end{bmatrix},
\end{equation*}
and it holds that $\Norm{\Matrix{Y} \krp \Matrix{Z}}{p,q} = \Norm{\Matrix{Y}}{p,q} \Norm{\Matrix{Z}}{p,q}$ \cite[Theorem~8]{lancaster1972norms}.
For conformable matrices, the following mixed-product property holds \cite[Lemma~4.2.10]{horn1994topics}:
\begin{equation*}
    (\Matrix{Y}_1 \krp \Matrix{Z}_1)(\Matrix{Y}_2 \krp \Matrix{Z}_2) = \Matrix{Y}_1\Matrix{Y}_2 \krp \Matrix{Z}_1\Matrix{Z}_2.
\end{equation*}

For a set $G$, let $G^\ast = \bigcup_{n \in \N} G^{n}$ be the set of $G$-valued finite-length sequences.
For a function $f: G \to H$, we denote by $f^\ast : G^\ast \to H^\ast$ its extension to sequences that acts entrywise as
\begin{equation*}
    f^\ast : (g_1, \ldots, g_n) \mapsto (f(g_1), \ldots, f(g_n)).
\end{equation*}
There are two main applications of such extensions for our purposes.
First, when $G = H = \Real$ and $\Matrix{x} \in \Real^{d}$, we can apply $f$ to $\Matrix{x}$ entrywise as $f^\ast(\Matrix{x}) \in \Real^{d}$.
Next, if $G = \Real^d$ and $H = \Real^D$, then the isomorphism $\Real^{d \times n} \cong (\Real^d)^n$ allows us to apply $f$ to $\Matrix{X} \in \Real^{d \times n}$ columnwise as $f^\ast(\Matrix{X}) \in \Real^{D \times n}$.

\subsection{Transformer architectures}
\label{subsec:transformer_architecture}
The fundamental task performed by generative LLMs is the prediction of the next \emph{token} in a sequence.
A token is an element of a finite vocabulary set $\Omega \cong \{ 1, \ldots, |\Omega| \}$, which is fixed for a given LLM.
In the notation introduced above, an LLM maps sequences from $\Omega^\ast$ to probability distributions on $\Omega$.

Denote by $N$ the length of the input sequence of tokens $(t_1, \ldots, t_N) \in \{ 1, \ldots, |\Omega| \}^N$.
First, the tokens are embedded into the vector space $\Real^{d}$ via a look-up in the \emph{embedding} matrix $\Matrix{W}_{E} \in \Real^{d \times |\Omega|}$, that is, via a matrix product $\Matrix{X} = \Matrix{W}_{E}~[\Matrix{e}_{t_1}~\ldots~\Matrix{e}_{t_N}] \in \Real^{d \times N}$.
Then, the embedded vectors pass through a transformer $\Layer{T} : (\Real^d)^\ast \to (\Real^d)^\ast$ to become $\Layer{T}(\Matrix{X}) \in \Real^{d \times N}$. 
Next, these vectors return to their original dimensions using a \emph{language-model} matrix $\Matrix{W}_{LM} \in \Real^{|\Omega| \times d}$, i.e, $\Matrix{W}_{LM} \Layer{T}(\Matrix{X}) \in \Real^{|\Omega| \times N}$.
Finally, the \emph{softmax} function
\begin{equation}
\label{eq:softmax}
    \Layer{S} : \Real^\ast \to \Real^\ast, \quad 
    \Layer{S}(\Matrix{y}) = \frac{1}{\sum_{i = 1}^n \exp(y_i)} \begin{bmatrix}
        \exp(y_1) & \cdots & \exp(y_n)
    \end{bmatrix}^\trans, \quad \Matrix{y} \in \Real^n,
\end{equation}
is applied columnwise to produce the output matrix $\Matrix{P} = \Layer{S}^\ast(\Matrix{W}_{LM} \Layer{T}(\Matrix{X})) \in \Real^{|\Omega| \times N}$.
Every column of $\Matrix{P}$ is a probability distribution on $\Omega$, and when the transformer $\Layer{T}$ is \emph{decoder-only}---the case considered in our paper---the $j$th output column $\Matrix{p}_j$ depends only on the first $j$ input columns $\Matrix{x}_1, \ldots, \Matrix{x}_j$ and corresponds to the prediction of the $(j+1)$th token.
For the specific task of inference, or next-token generation, only the last output column $\Matrix{p}_N$ is used.

We focus our analysis exclusively on the transformer $\Layer{T}$ itself.
A decoder-only sequential transformer of depth $L$ with pre-normalisation is a deep composition of residual blocks
\begin{equation}
\label{eq:transformer-LN}
    \Layer{T} = (\Id + \Layer{F}_L^\ast \circ \Layer{LN}_{L,2}^\ast) \circ (\Id + \Layer{A}_L \circ \Layer{LN}_{L,1}^\ast) \circ \cdots \circ (\Id + \Layer{F}_1^\ast \circ \Layer{LN}_{1,2}^\ast) \circ (\Id + \Layer{A}_1 \circ \Layer{LN}_{1,1}^\ast),
\end{equation}
where $\Id$ is the identity map.
Layer normalisation $\Layer{LN}_{l,i} : \Real^d \to \Real^d$, FF mechanism $\Layer{F}_l : \Real^d \to \Real^d$, and self-attention mechanism $\Layer{A}_l : (\Real^d)^\ast \to (\Real^d)^\ast$ will be defined below.
We demonstrate the practical relevance of the architecture \eqref{eq:transformer-LN} by comparing it with industrial LLMs in Appendix~\ref{appendix:llm_comparison}.

\emph{Layer normalisation} \citep{ba2016layer} is traditionally defined with a positive stabilisation parameter:
\begin{equation}
\label{eq:layernorm}
    \Layer{LN}(\Matrix{x}) = \frac{\Matrix{g} \hap (\Matrix{I}_d - \tfrac{1}{d} \One \One^\trans)\Matrix{x}}{\sqrt{\Norm{(\Matrix{I}_d - \tfrac{1}{d} \One \One^\trans)\Matrix{x}}{2}^2 + \epsilon}}.
\end{equation}
It subtracts the sample mean from $\Matrix{x}$, normalises it to `unit' norm, and scales with an entrywise non-zero \emph{gain} $\Matrix{g} \in \Real^d$, whose `default' value is $\sqrt{d} \One$.
Omitting the subtraction of mean, we obtain a function
\begin{equation}
\label{eq:rms-layernorm}
    \Layer{RN}(\Matrix{x}) = \frac{\Matrix{g} \hap \Matrix{x}}{\sqrt{\Norm{\Matrix{x}}{2}^2 + \epsilon}}
\end{equation}
called \emph{root-mean-square} layer normalisation \citep{zhang2019root}.
In defining the two functions, we have omitted an additive bias term, which can be learned together with the gain.
As Table~\ref{tab:LLM-LN} shows, the choice of normalisation and the inclusion of bias vary across generations of LLMs.
Following the more recent trend, we omit bias and, in addition to \eqref{eq:transformer-LN}, consider an architecture with $\Layer{RN}$:
\begin{equation}
\label{eq:transformer-RN}
    \Layer{T} = (\Id + \Layer{F}_L^\ast \circ \Layer{RN}_{L,2}^\ast) \circ (\Id + \Layer{A}_L \circ \Layer{RN}_{L,1}^\ast) \circ \cdots \circ (\Id + \Layer{F}_1^\ast \circ \Layer{RN}_{1,2}^\ast) \circ (\Id + \Layer{A}_1 \circ \Layer{RN}_{1,1}^\ast).
\end{equation}

Transformer architectures also differ in where the normalisation is placed within each residual block \citep{xiong2020layer}.
The original approach proposed in \cite{vaswani2017attention} is \emph{post-normalisation}: the transformer blocks are organised as $(\Layer{LN}^\ast \circ (\Id + \Layer{F}^\ast)) \circ (\Layer{LN}^\ast \circ (\Id + \Layer{A}))$ with normalisation applied after the residual connection.
\emph{Pre-normalisation} is predominantly used in recent LLMs, and we adopt it in the architectures \eqref{eq:transformer-LN} and \eqref{eq:transformer-RN}.
A different approach was taken in \cite{walsh20252}, where the outputs of residual branches themselves are normalised before addition, i.e., $(\Id + \Layer{LN}^\ast \circ \Layer{F}^\ast) \circ (\Id + \Layer{LN}^\ast \circ \Layer{A})$, and \cite{riviere2024gemma} combines this with pre-normalisation.
See Table~\ref{tab:LLM-LN}.

Consider FF mechanisms next.
We use the standard two-layer mechanism with a componentwise activation function $\sigma : \Real \to \Real$.
Specifically, a two-layer mechanism is a composition
\begin{equation}
\label{eq:ffn-gelu}
    \Layer{F}(\Matrix{x}) = \Matrix{W}_{down} \sigma^\ast(\Matrix{W}_{up} \Matrix{x})
\end{equation}
with \emph{up-projection} and \emph{down-projection} matrices $\Matrix{W}_{up} \in \Real^{D \times d}$ and $\Matrix{W}_{down} \in \Real^{d \times D}$ ($D \geq d$).
In general, up-projection and down-projection can be affine operators with an additive bias term, though we omit it in accordance with recent trends (Table~\ref{tab:LLM-FFN}).
The typical choices for the activation function are ReLU, $\sigma(x) = \max\{ 0, x \}$, and GELU, $\sigma(x) = x \Phi(x)$, defined via the standard Gaussian cumulative distribution function $\Phi$ \citep{hendrycks2016gaussian}.
We will use smooth activations in the analysis (e.g., GELU).

Recent models adopt a \emph{gated} FF mechanism that consists of \emph{three} linear layers.
It is defined as
\begin{equation}
\label{eq:ffn-swiglu}
    \Layer{FG}(\Matrix{x}) = \Matrix{W}_{down} \Big( \sigma^\ast(\Matrix{W}_{gate}\Matrix{x}) \hap \Matrix{W}_{up} \Matrix{x} \Big),
\end{equation}
where we omit biases again and introduce the \emph{gate-projection} matrix $\Matrix{W}_{gate} \in \Real^{D \times d}$ \citep{shazeer2020glu}.
The gating function $\sigma$ is typically the \emph{Swish} function, $\sigma(x) = x / (1 + \exp(-\beta x))$.
Most modern LLMs select $\beta = 1$, reducing Swish to the \emph{SiLU} function and yielding the SwiGLU mechanism (Table~\ref{tab:LLM-FFN}).
We will analyse this prevailing architecture as a modification of \eqref{eq:transformer-LN} and \eqref{eq:transformer-RN}:
\begin{equation}
\label{eq:transformer-SwiGLU}
    \Layer{T} = (\Id + \Layer{FG}_L^\ast \circ \Layer{RN}_{L,2}^\ast) \circ (\Id + \Layer{A}_L \circ \Layer{RN}_{L,1}^\ast) \circ \cdots \circ (\Id + \Layer{FG}_1^\ast \circ \Layer{RN}_{1,2}^\ast) \circ (\Id + \Layer{A}_1 \circ \Layer{RN}_{1,1}^\ast).
\end{equation}

The two FF mechanisms described above are \emph{dense}, meaning the same weights are applied to every input.
In contrast, the sparse \emph{mixture-of-experts} maintains multiple dense FF mechanisms, or \emph{experts} \citep{shazeer2017outrageously}.
For each input, a gating mechanism selects a subset of experts and computes a weighted sum of their outputs.
In this work, we focus on architectures with dense FF mechanisms.

\emph{Causal multi-head self-attention} \citep{vaswani2017attention} is the only part of a transformer that `mixes' tokens; specifically, the $j$th column of $\Layer{A}(\Matrix{X})$ is determined by the first $j$ columns of $\Matrix{X} \in \Real^{d \times N}$.
Attention depends on three weight matrices $\Matrix{W}_{Q}, \Matrix{W}_{K}, \Matrix{W}_{V} \in \Real^{d \times d}$, which are used to compute the \emph{query} matrix $\Matrix{Q} = \Matrix{W}_{Q} \Matrix{X}$, the \emph{key} matrix  $\Matrix{K} = \Matrix{W}_{K} \Matrix{X}$, and the \emph{value} matrix $\Matrix{V} = \Matrix{W}_{V} \Matrix{X}$.
These are then split into blocks of $d_{head}$ rows across $n_{head}$ attention \emph{heads} with $d = n_{head} d_{head}$ and, e.g., $\Matrix{Q}_h \in \Real^{d_{head} \times N}$ for the $h$th head.
Attention heads compute $\Matrix{A}_h = \Layer{AH}(\Matrix{Q}_h, \Matrix{K}_h, \Matrix{V}_h) \in \Real^{d_{head} \times N}$ column by column according to
\begin{equation}
\label{eq:sh-attention}
    \Matrix{a}_{h, n} = \Matrix{V}_{h, [1,n]} \Layer{S}\bigg( \frac{(\Matrix{K}_{h, [1,n]})^\trans \Matrix{q}_{h,n}}{\sqrt{d_{head}}} \bigg) \in \Real^{d_{head}}, \quad 1 \leq n \leq N,
\end{equation}
where $\Matrix{V}_{h, [1,n]}, \Matrix{K}_{h, [1,n]} \in \Real^{d_{head} \times n}$ denotes the `causal' restriction of values and keys.
Finally, individual attention heads are combined via the \emph{output} weight matrix $\Matrix{W}_O \in \Real^{d \times d}$ to yield
\begin{equation}
\label{eq:mh-attention}
    \Layer{A}(\Matrix{X}) = \Matrix{W}_O \begin{bmatrix}
        \Matrix{A}_1^\trans & \cdots & \Matrix{A}_{n_{head}}^\trans
    \end{bmatrix}^\trans \in \Real^{d \times N}.
\end{equation}
The linear operators induced by the weights can be extended to affine operators by adding bias vectors. However, Table~\ref{tab:LLM-ATTN} shows that many modern LLMs omit them, and so do we to preserve clarity.

As described above, each input column $\Matrix{x}_n$ is transformed into key and query vectors in the same way.
\emph{Positional encodings} (PE) add a dependence on $n$ to these transformations.
\emph{Absolute} PE were used in earlier models and amount to adding a vector before weight multiplication, e.g., $\Matrix{q}_{n} = \Matrix{W}_{Q} (\Matrix{x}_n + \Matrix{p}_n)$.
The ALiBi method introduced in \cite{press2021train} injects positional information into the key-query product $(\Matrix{K}_{h, [1,n]})^\trans \Matrix{q}_{h,n}$ by adding a fixed vector to it.
The standard approach adopted in modern LLMs, as Table~\ref{tab:LLM-ATTN} demonstrates, is \emph{rotary} PE (RoPE, \cite{su2024roformer}). 
RoPE relies on a pre-defined, fixed multiplicative group of rotation matrices $\{ \Matrix{R}_n \}_{n \in \Z}$ and applies them to both keys and queries:
\begin{equation*}
    \Matrix{q}_{n} = \Matrix{R}_n \Matrix{W}_{Q} \Matrix{x}_n, \quad \Matrix{k}_{n} = \Matrix{R}_n \Matrix{W}_{K} \Matrix{x}_n.
\end{equation*}
Following the modern convention, we will study the effects of RoPE on our bounds.

Another major trend in recent models is the use of \emph{grouped-query} attention \citep{shazeer2019fast, ainslie2023gqa}.
This modification involves reusing the same key and value matrices, $\Matrix{K}_h$ and $\Matrix{V}_h$, across multiple attention heads to reduce runtime memory requirements.
For the purposes of our analysis, grouped-query attention requires no separate treatment, and hence we omit it without loss of generality.
The \emph{latent} attention of \cite{liu2024deepseek} relies on a different technique to compress keys and values, which we do not take into consideration to keep the analysis more focused.
For the same reason, we do not explicitly study \emph{sliding-window} attention \citep{jiang2023mistral, jiang2024mixtral, riviere2024gemma}.

\subsection{Condition numbers}
\label{subsec:condition_numbers}
Let us recall the basics of differentiable functions and their sensitivity to perturbations of the arguments.
Consider normed linear spaces $V, Z$ over $\Real$ and a function $f : V \to Z$ defined on an open domain.
The function $f$ is said to be \emph{Fr\'echet differentiable} at $v \in V$ if there exists a continuous linear operator $\Layer{D}_f(v)$ from $V$ to $Z$ such that
\begin{equation*}
    \lim_{\Norm{\delta}{V} \to 0} \frac{\Norm{f(v + \delta) - f(v) - \Layer{D}_f(v)\delta}{Z}}{\Norm{\delta}{V}} = 0.
\end{equation*}
The operator $\Layer{D}_f(v)$ is unique, if exists, and is called the \emph{Fr\'echet derivative} of $f$ at $v$.
When $V = \Real^n$ and $Z = \Real^m$, the Fr\'echet derivative is an $m \times n$ matrix of partial derivatives.
In what follows, we consider only finite-dimensional $V$ and $Z$.

In the finite-dimensional setting, the definition above is independent of the choice of norms.
Given bases $v_1, \ldots, v_{\dim V} \in V$ and $z_1, \ldots, z_{\dim Z} \in Z$, we can represent the Fr\'echet derivative as a $\dim Z \times \dim V$ matrix.
Denote by $\Layer{C}_V : V \to \Real^{\dim V}$ the linear operator that evaluates the expansion coefficients of its argument in the fixed basis and consider a function $g : \Real^{\dim V} \to \Real^{\dim Z}$ defined by $g(\Matrix{x}) = \Layer{C}_Z f(\Layer{C}_V^{-1} \Matrix{x})$.
By the chain rule, its Fr\'echet derivative is a matrix $\Layer{D}_g(\Matrix{x}) = \Layer{C}_Z \circ \Layer{D}_f(\Layer{C}_V^{-1} \Matrix{x}) \circ \Layer{C}_V^{-1}$.
This matrix is the \emph{Jacobian} of $f$ at $v = \Layer{C}_V^{-1} \Matrix{x}$ in the selected bases, and we denote it by $\Matrix{J}_f(v) \in \Real^{\dim Z \times \dim V}$.
The normed spaces we will encounter in this article are spaces of vectors and matrices, and we shall use the standard orthonormal bases $\Matrix{e}_1, \ldots, \Matrix{e}_n \in \Real^n$ and $\Matrix{E}_{1,1}, \ldots, \Matrix{E}_{m,n} \in \Real^{m \times n}$ for them.

Computations can become confusing when spaces of matrices are involved \citep{magnus2019matrix, magnus2024matrix}.
To circumvent the confusion, we define the \emph{vectorisation} of $\Matrix{X} \in \Real^{m \times n}$ as
\begin{equation*}
    \Vect{\Matrix{X}} = \begin{bmatrix}
        \Matrix{x}_1^\trans & \cdots & \Matrix{x}_n^\trans
    \end{bmatrix}^\trans \in \Real^{mn},
\end{equation*}
and the vectorisation of a matrix function $f : \Real^{m \times n} \to \Real^{p \times q}$ as
\begin{equation*}
    f_{\vect} : \Real^{mn} \to \Real^{pq}, \quad f_{\vect} = \vect \circ f \circ \vect^{-1}.
\end{equation*}
Note that $\Layer{C}_{\Real^n} \Matrix{x} = \Matrix{x}$ and $\Layer{C}_{\Real^{m \times n}} \Matrix{X} = \Vect{\Matrix{X}}$, so that the Jacobian of a matrix function $f$ is the Fr\'echet derivative of its vectorisation, or $\Matrix{J}_{f}(\Matrix{X}) = \Matrix{J}_{f_{\vect}}(\Vect{\Matrix{X}})$.\footnote{The matrix $\Matrix{J}_{f_{\vect}}(\Vect{\Matrix{X}})$ is also known as the Kronecker form of the Fr\'echet derivative $\Layer{D}_f(\Matrix{X})$ \cite[\S 3]{higham2008functions}.}
As a useful example, consider the GEMM function $f : \Real^{m \times k} \times \Real^{k \times n} \to \Real^{m \times n}$ defined by $f(\Matrix{X}, \Matrix{Y}) = \Matrix{X} \Matrix{Y}$.
Its Jacobian equals
\begin{equation*}
    \Matrix{J}_{f}(\Matrix{X}, \Matrix{Y}) = \begin{bmatrix}
        \Matrix{Y}^\trans \krp \Matrix{I}_m & \Matrix{I}_n \krp \Matrix{X}
    \end{bmatrix} \in \Real^{mn \times (mk + kn)}.
\end{equation*}

The Jacobian of a smooth function determines its local sensitivity to input perturbations \citep{rice1966theory}, and it is important to specify how the input and output perturbations are measured.
Let $v \in V$.
We define the \emph{relative normwise} and \emph{relative componentwise} distances from $\hat{v} \in V$ to $v$ as
\begin{equation*}
    \dist{V}{\hat{v}, v} = \frac{\Norm{\hat{v} - v}{V}}{\Norm{v}{V}}, \quad \dist{c}{\hat{v}, v} = \max_{1 \leq i \leq \dim V} \left| \frac{(\Layer{C}_V \hat{v})_i - (\Layer{C}_V v)_i}{(\Layer{C}_V v)_i} \right|,
\end{equation*}
where $x / 0 = \infty$ for $x \neq 0$ and $0 / 0 = 0$ by convention.
It follows from the definition that $\dist{c}{\hat{v}, v} < \infty$ if and only if $\supp{\Layer{C}_V \hat{v}} \subseteq \supp{\Layer{C}_V v}$, while $\dist{V}{\hat{v}, v} = \infty$ if and only if $v = 0$ and $\hat{v} \neq 0$.
(Relative) condition numbers reflect the sensitivity of relative output perturbations to relative input perturbations \citep{gohberg1993mixed}.
For a function $f : V \to Z$ and $v \in V$, the condition numbers of $f$ at $v$ are
\begin{equation*}
    \cond{\theta_{in},\theta_{out}}{f,v} = \lim_{\epsilon \to 0} \sup \set[\Bigg]{\frac{\dist{\theta_{out}}{f(\hat{v}), f(v)}}{\dist{\theta_{in}}{\hat{v}, v}}}{\dist{\theta_{in}}{\hat{v}, v} \leq \epsilon},
\end{equation*}
where $\theta_{\mathrm{in}}$ and $\theta_{\mathrm{out}}$ range through possible combinations of distance used to measure input and output perturbations.
Note that $\hat{v} = v$ is allowed in the definition above; together with the conventions regarding the division by zero, this lets us omit the standard assumptions of $v \neq 0$ and $f(v) \neq 0$ by allowing the condition numbers to be infinite.
We shall say that a condition number is \emph{normwise} (or \emph{componentwise}) if both input and output perturbations are measured with normwise (or componentwise) distances; all other combinations are referred to as \emph{mixed}.

When the function $f$ is Fr\'echet differentiable at $v$, its condition numbers are expressed explicitly in terms of the Jacobian.

\begin{proposition}
\label{proposition:condition_numbers}
Let $f$ be Fr\'echet differentiable at $v \in V$, and let $\Norm{v}{p} = \Norm{\Layer{C}_V v}{p}$ and $\Norm{z}{p} = \Norm{\Layer{C}_Z z}{p}$ be norms on $V$ and $Z$ for any $1 \leq p \leq \infty$.
Then the normwise condition number of $f$ is equal to
\begin{equation*}
    \cond{V,Z}{f,v} = \Norm{\Layer{D}_f(v)}{V,Z} \frac{\Norm{v}{V}}{\Norm{f(v)}{Z}},
\end{equation*}
its mixed condition numbers are equal to
\begin{equation*}
    \cond{p,c}{f,v} = \Norm{\diag{\Layer{C}_Z f(v)}^{-1} \Matrix{J}_f(v)}{p,\infty} \Norm{v}{p}, \quad
    \cond{c,p}{f,v} = \Norm{\Matrix{J}_f(v) \diag{\Layer{C}_V v}}{\infty,p} \frac{1}{\Norm{f(v)}{p}},
\end{equation*}
and its componentwise condition number equals 
\begin{equation*}
    \cond{c,c}{f,v} = \Norm{\diag{\Layer{C}_Z f(v)}^{-1} \Matrix{J}_f(v) \diag{\Layer{C}_V v}}{\infty,\infty}.
\end{equation*}
\end{proposition}
\begin{proof}
Follows from the definition in analogy with \cite{rice1966theory, gohberg1993mixed}.
\end{proof}

The following statement rigorously describes how the condition numbers reflect the sensitivity of a function to perturbations.
It requires the function to be more than just Fr\'echet differentiable \citep[\S4.2]{zeidler1995applied}, which holds for all functions considered in our work.

\begin{proposition}
\label{proposition:sensitivity_bound}
Let $f$ be twice continuously Fr\'echet differentiable on an open set $\hat{V} \subseteq V$ and let $v \in \hat{V}$.
Let $\theta_{\mathrm{in}}$ and $\theta_{\mathrm{out}}$ describe the distances used for input and output perturbations.
Let $t > 0$ and consider a distance ball $\mathbb{B}_{\theta_{\mathrm{in}}}(v, t) = \set{\hat{v} \in V}{\dist{\theta_{in}}{\hat{v}, v} \leq t}$.
For every $t$ such that $\mathbb{B}_{\theta_{\mathrm{in}}}(v, t) \subset \hat{V}$, there exists a constant $C_{\theta_{\mathrm{in}}, \theta_{\mathrm{out}}}(v, t) \geq 0$ such that
\begin{equation*}
    \dist{\theta_{out}}{f(\hat{v}), f(v)} \leq \cond{\theta_{in}, \theta_{out}}{f,v} \dist{\theta_{in}}{\hat{v},v} + C_{\theta_{\mathrm{in}}, \theta_{\mathrm{out}}}(v, t) \dist{\theta_{in}}{\hat{v},v}^2, \quad \hat{v} \in \mathbb{B}_{\theta_{\mathrm{in}}}(v, t).
\end{equation*}
If the condition number is finite then $C_{\theta_{\mathrm{in}}, \theta_{\mathrm{out}}}(v, t) < \infty$.
If $f$ is an affine function then $C_{\theta_{\mathrm{in}}, \theta_{\mathrm{out}}}(v, t) = 0$.
\end{proposition}
\begin{proof}
Follows from Taylor's theorem \citep[\S4.5]{zeidler1995applied} since $\mathbb{B}_{\theta_{\mathrm{in}}}(v, t)$ is compact and convex.
\end{proof}

\subsection{Rounding error analysis}
\label{subsec:rounding_error_analysis}
Here, we recall the basics of FP arithmetic and classical deterministic rounding error analysis \citep{higham2002accuracy, muller2018handbook}.
Consider an FP number system $\F$ with base 2 and $\mu \in \N$ mantissa bits; for example, this could be FP64 ($\mu = 52$), FP32 ($\mu = 23$), TF32 ($\mu = 10$), or BF16 ($\mu = 7$).
The \emph{unit round-off} of $\F$ is defined as $\uh = 2^{-\mu-1}$.
Any number $x \in \Real \setminus \F$ can be rounded to the nearest FP number,\footnote{In this work, we do not take into account the dynamic range of FP number systems and the issues related to overflow.} denoted by $\fl{x} \in \F$, which satisfies the fundamental rounding property \citep[Thm.~2.2]{higham2002accuracy}:
\begin{equation}
\label{eq:correct_rounding}
    \fl{x} = x(1 + \delta), \quad |\delta| \leq \uh.
\end{equation}

As is common in numerical analysis, we will assume that arithmetic operations and the elementary functions relevant to LLMs (namely, $\sqrt{\cdot}$ and $\exp$) are \emph{correctly rounded}; that is, their computed values satisfy the relative bound \eqref{eq:correct_rounding}.
Therefore, our rounding error analysis will rely on the following theoretical model of FP arithmetic, where $\fl{\cdot}$ denotes the value of a function evaluated according to a certain algorithm in FP arithmetic.

\begin{model}
\label{model:fp_arithmetic}
For every arithmetic operation $\mathrm{op} \in \{ +, -, \times, / \}$ and every elementary function $f$,
\begin{alignat*}{3}
    &\fl{x~\mathrm{op}~y} &&= (x~\mathrm{op}~y)(1 + \delta_1), &&\quad |\delta_1| \leq \uh, \\
    &\fl{f(x)} &&= f(x) (1 + \delta_2), &&\quad |\delta_2| \leq \uh.
\end{alignat*}
\end{model}

When several consecutive elementary functions are computed, the resulting relative error is typically described by a product $\prod_{k = 1}^{n} (1 + \delta_k)$, which can be simplified as in \cite[Lem.~3.1]{higham2002accuracy}.

\begin{lemma}
\label{lemma:theta_gamma}
Let $|\delta_k| \leq \uh$ and $\rho_k = \pm 1$ for $k = 1, \ldots, n$. If $n \uh < 1$ then
\begin{equation*}
    \prod_{k = 1}^{n} (1 + \delta_k)^{\rho_k} = 1 + \theta_n, \quad |\theta_n| \leq \gamma(n,\uh) = \frac{n\uh}{1 - n\uh} = n\uh + \Oh{\uh^2}.
\end{equation*}
\end{lemma}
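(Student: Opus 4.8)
The plan is a straightforward induction on $n$, with the inductive step split according to whether $\rho_n = +1$ or $\rho_n = -1$; the only substantive content is a pair of elementary inequalities comparing $\gamma_{n-1}$, $\gamma_n$ and $\uh$, so the work is essentially bookkeeping.

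For the base case $n = 1$: if $\rho_1 = 1$ take $\theta_1 = \delta_1$, and if $\rho_1 = -1$ write $(1+\delta_1)^{-1} = 1 + \theta_1$ with $\theta_1 = -\delta_1/(1+\delta_1)$; in either case $|\theta_1| \leq \uh/(1-\uh) = \gamma_1$, where one uses that $n\uh < 1$ forces $|\delta_1| \leq \uh < 1$ and hence $1 + \delta_1 \geq 1 - \uh > 0$. For the inductive step, assume $\prod_{k=1}^{n-1}(1+\delta_k)^{\rho_k} = 1 + \theta_{n-1}$ with $|\theta_{n-1}| \leq \gamma_{n-1}$ (the hypothesis applies since $(n-1)\uh \leq n\uh < 1$). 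If $\rho_n = 1$ then $\theta_n = \theta_{n-1} + \delta_n + \theta_{n-1}\delta_n$, so $|\theta_n| \leq \gamma_{n-1}(1+\uh) + \uh$; substituting $\gamma_{n-1} = (n-1)\uh/(1-(n-1)\uh)$ one checks $\gamma_{n-1}(1+\uh)+\uh = n\uh/(1-(n-1)\uh) \leq n\uh/(1-n\uh) = \gamma_n$, the last step because $0 < 1-n\uh \leq 1-(n-1)\uh$. If $\rho_n = -1$ then $\theta_n = (\theta_{n-1}-\delta_n)/(1+\delta_n)$, so $|\theta_n| \leq (\gamma_{n-1}+\uh)/(1-\uh)$, and clearing the (positive) denominators reduces $(\gamma_{n-1}+\uh)/(1-\uh) \leq \gamma_n$ to the inequality $0 \leq (n-1)\uh^2$.

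The asymptotic identity $\gamma_n = n\uh + \Oh{\uh^2}$ then follows by expanding $n\uh/(1-n\uh)$ as the geometric series $n\uh \sum_{j \geq 0}(n\uh)^j = n\uh + n^2\uh^2 + \cdots$, whose tail is $\Oh{\uh^2}$. I expect no real obstacle here beyond keeping track of signs and of the positivity of every denominator, all of which is guaranteed by the standing assumption $n\uh < 1$; since the statement is classical (it is Lemma~3.1 of \cite{higham2002accuracy}), one could alternatively just cite it, but the induction above is short and self-contained.
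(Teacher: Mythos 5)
Your induction is correct, and all the inequalities check out: in the $\rho_n = 1$ case the numerator collapses to $n\uh$ exactly as you say, and in the $\rho_n = -1$ case cross-multiplying does indeed reduce to $0 \leq (n-1)\uh^2$. Note that the paper does not give its own proof of this lemma at all---it is stated as a citation to Higham's book---so there is nothing to compare against beyond observing that your argument is the standard textbook induction (essentially the one in the cited source), and it is complete as written.
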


The purpose of our analysis is to bound the \emph{forward} error.
The most well-studied algorithms in this regard concern summation and matrix products.
When based on \emph{recursive summation}, they satisfy
\begin{equation*}
    \Big|\fl[\Big]{\sum\nolimits_{i = 1}^d x_i} - \sum\nolimits_{i = 1}^d x_i\Big| \leq \gamma(d-1,\uh) \Norm{\Matrix{x}}{1}, \quad |\fl{\Matrix{W x}} - \Matrix{W x}| \leq \gamma(d,\uh) \Matrix{|W| |x|}, \quad \Matrix{x} \in \Real^{d}, \quad \Matrix{W} \in \Real^{D \times d}.
\end{equation*}
However, modern accelerators used in deep learning do not compute GEMMs via recursive summation.
To account for this discrepancy without sacrificing the clarity of presentation, we introduce a theoretical model of numerical GEMMs that abstracts away the specific algorithm.
In addition, the model below explicitly incorporates the quantisation of GEMM inputs (cf. \cite{blanchard2020mixed}).

\begin{model}
\label{model:fp_gemm}
Let $\Matrix{W} \in \Real^{D \times d}$ and $\Matrix{X} \in \Real^{d \times N}$, and let $\Matrix{Y} \in \Real^{D \times N}$ be stored in precision $\uh_{a}$. Then
\begin{equation*}
    |\fl{\Matrix{Y} + \Matrix{W X}} - (\Matrix{Y} + \Matrix{W X})| \leq \uh_{a} |\Matrix{Y}| + \Big( \gamma_{MM}(d, \uh_{a}) + \gamma(2, \uh_{q}) + \gamma_{MM}(d, \uh_{a})\gamma(2, \uh_{q}) \Big) \Matrix{|W| |X|}.
\end{equation*}
\end{model}

Model~\ref{model:fp_gemm} contains two precisions: the quantisation precision $\uh_{q}$ and the accumulation precision $\uh_{a}$.
Our analysis will also include a working precision $\uh_{w}$ used to compute nonlinear functions and a residual precision $\uh_{r}$ used to store the state variable $\Matrix{X}$ in the residual stream between blocks.
We assume that these precisions satisfy $\uh_{w} \leq \uh_{a} \leq \uh_{r} \ll \uh_{q}$ and will write $\gamma_{MM}(d) = \gamma_{MM}(d, \uh_a)$ for brevity.

Let us note that we assume \emph{fused accumulation} for residual updates $\Matrix{X} + f(\Matrix{X})$.
That is, the output of $f(\Matrix{X})$---which concludes with a GEMM for both FF and attention blocks---is not rounded to residual precision $\uh_{r}$ for intermediate storage.
Instead, it is kept in accumulation precision $\uh_{a}$ and immediately added to $\Matrix{X}$ (via a so-called GEMM epilogue).
Only then is $\Matrix{X} + f(\Matrix{X})$ rounded to residual precision $\uh_{r}$.
\section{Analysis of layer normalisation}
\label{sec:layernorm}

\subsection{Condition numbers: Derivation}
We begin by studying the condition numbers of the $\Layer{RN}$ normalisation function defined in \eqref{eq:rms-layernorm} as
\begin{equation*}
    \Layer{RN}(\Matrix{x}) = \frac{\Matrix{g} \hap \Matrix{x}}{\sqrt{\Norm{\Matrix{x}}{2}^2 + \epsilon}}.
\end{equation*}

\begin{theorem}
\label{theorem:cond-rms-layernorm}
Let $\Matrix{x} \in \Real^d$.
Normwise condition numbers of $\Layer{RN}$ satisfy
\begin{gather*}
    \cond{2,2}{\Layer{RN}, \Matrix{x}} \leq \frac{\Norm{\Matrix{g}}{\infty} \Norm{\Matrix{x}}{2}}{\Norm{\Matrix{g} \hap \Matrix{x}}{2}}, \quad
    \cond{2,\infty}{\Layer{RN}, \Matrix{x}} = \max_{1 \leq i \leq d} |g_i| \sqrt{1 - \bigg( 1 + \frac{\epsilon}{\Norm{\Matrix{x}}{2}^2 + \epsilon} \bigg) \frac{x_i^2}{\Norm{\Matrix{x}}{2}^2 + \epsilon}} \frac{\Norm{\Matrix{x}}{2}}{\Norm{\Matrix{g} \hap \Matrix{x}}{\infty}}, \\
    \cond{\infty,2}{\Layer{RN}, \Matrix{x}} \leq \max_{\Matrix{v} \in \{ -1,1 \}^d} \sqrt{d - \bigg( 1 + \frac{\epsilon}{\Norm{\Matrix{x}}{2}^2 + \epsilon} \bigg) \frac{(\Matrix{v}^\trans \Matrix{x})^2}{\Norm{\Matrix{x}}{2}^2 + \epsilon}} \frac{\Norm{\Matrix{g}}{\infty} \Norm{\Matrix{x}}{\infty}}{\Norm{\Matrix{g} \hap \Matrix{x}}{2}},\\
    \cond{\infty,\infty}{\Layer{RN}, \Matrix{x}} = \max_{1 \leq i \leq d} |g_i| \left( 1 + \frac{ |x_i| ( \Norm{\Matrix{x}}{1} - 2 |x_i| )}{\Norm{\Matrix{x}}{2}^2 + \epsilon} \right) \frac{\Norm{\Matrix{x}}{\infty}}{\Norm{\Matrix{g} \hap \Matrix{x}}{\infty}},
\end{gather*}
its mixed condition numbers satisfy
\begin{gather*}
    \cond{c,\infty}{\Layer{RN}, \Matrix{x}} = 2 \max_{1 \leq i \leq d} |g_i x_i| \bigg( 1 - \frac{x_i^2 + \epsilon/2}{\Norm{\Matrix{x}}{2}^2 + \epsilon} \bigg) \frac{1}{\Norm{\Matrix{g} \hap \Matrix{x}}{\infty}}, \\
    \cond{\infty,c}{\Layer{RN}, \Matrix{x}} = \left( \frac{1}{\Norm{\Matrix{x}}{-\infty}} + \frac{ \Norm{\Matrix{x}}{1} - 2 \Norm{\Matrix{x}}{-\infty} }{\Norm{\Matrix{x}}{2}^2 + \epsilon} \right) \Norm{\Matrix{x}}{\infty}, \\
    \cond{c,2}{\Layer{RN}, \Matrix{x}} \leq \max_{\Matrix{v} \in \{ -1,1 \}^d} \sqrt{\Norm{\Matrix{x}}{2}^2 - \bigg( 1 + \frac{\epsilon}{\Norm{\Matrix{x}}{2}^2 + \epsilon} \bigg) \frac{(\Matrix{v}^\trans (\Matrix{x} \hap \Matrix{x}))^2}{\Norm{\Matrix{x}}{2}^2 + \epsilon}} \frac{\Norm{\Matrix{g}}{\infty}}{\Norm{\Matrix{g} \hap \Matrix{x}}{2}}, \\
    \cond{2,c}{\Layer{RN}, \Matrix{x}} = \sqrt{\frac{1}{\Norm{\Matrix{x}}{-\infty}^2} - \bigg( 1 + \frac{\epsilon}{\Norm{\Matrix{x}}{2}^2 + \epsilon} \bigg) \frac{1}{\Norm{\Matrix{x}}{2}^2 + \epsilon}} \Norm{\Matrix{x}}{2},
\end{gather*}
and its componentwise condition number equals
\begin{equation*}
    \cond{c,c}{\Layer{RN}, \Matrix{x}} = 2 \bigg( 1 - \frac{\Norm{\Matrix{x}}{-\infty}^2 + \epsilon/2}{\Norm{\Matrix{x}}{2}^2 + \epsilon} \bigg).
\end{equation*}
The inequalities become equalities when $\Matrix{g} = g \One$.
\end{theorem}
\begin{proof}
Taking the partial derivatives of $\Layer{RN}$, we get the Jacobian and denote
\begin{equation*}
    \Matrix{J}_{\Layer{RN}}(\Matrix{x}) = \frac{\diag{\Matrix{g}}}{\sqrt{\Norm{\Matrix{x}}{2}^2 + \epsilon}} \bigg( \Matrix{I}_d - \frac{\Matrix{x} \Matrix{x}^\trans}{\Norm{\Matrix{x}}{2}^2 + \epsilon} \bigg) \in \Real^{d \times d}, \quad \Matrix{K} = \Matrix{J}_{\Layer{RN}}(\Matrix{x}) \sqrt{\Norm{\Matrix{x}}{2}^2 + \epsilon}.
\end{equation*}

\emph{(Normwise)}
To bound $\cond{2,2}{\Layer{RN}, \Matrix{x}}$ via Proposition~\ref{proposition:condition_numbers}, we bound the spectral norm of
\begin{equation*}
    \Norm{\Matrix{K}}{2,2} \leq \Norm{\Matrix{g}}{\infty} \Norm[\bigg]{\Matrix{I}_d - \frac{\Matrix{x} \Matrix{x}^\trans}{\Norm{\Matrix{x}}{2}^2 + \epsilon}}{2,2} = \Norm{\Matrix{g}}{\infty}.
\end{equation*}
To compute $\cond{\infty,\infty}{\Layer{RN}, \Matrix{x}}$, we consider
\begin{align*}
    \Norm{\Matrix{K}}{\infty,\infty} = \max_{i} |g_i| \sum_{j} \bigg|\delta_{i,j} - \frac{x_i x_j}{\Norm{\Matrix{x}}{2}^2 + \epsilon}\bigg| &= \max_{i} |g_i| \left[ \bigg( 1 - \frac{x_i^2}{\Norm{\Matrix{x}}{2}^2 + \epsilon} \bigg) + \sum_{j \neq i} \frac{|x_i x_j|}{\Norm{\Matrix{x}}{2}^2 + \epsilon} \right] \\
    &= \max_{i} |g_i| \left[ \bigg( 1 - \frac{x_i^2}{\Norm{\Matrix{x}}{2}^2 + \epsilon} \bigg) + \frac{|x_i| (\Norm{\Matrix{x}}{1} - |x_i|)}{\Norm{\Matrix{x}}{2}^2 + \epsilon} \right].
\end{align*}
Similarly for $\cond{2,\infty}{\Layer{RN}, \Matrix{x}}$, we have
\begin{align*}
    \Norm{\Matrix{K}}{2,\infty} &= \max_{i} |g_i| \sqrt{\sum_{j} \bigg|\delta_{i,j} - \frac{x_i x_j}{\Norm{\Matrix{x}}{2}^2 + \epsilon}\bigg|^2} = \max_{i} |g_i| \sqrt{\bigg( 1 - \frac{x_i^2}{\Norm{\Matrix{x}}{2}^2 + \epsilon} \bigg) + \frac{x_i^2}{\Norm{\Matrix{x}}{2}^2 + \epsilon} \frac{\Norm{\Matrix{x}}{2}^2 - |x_i|^2}{\Norm{\Matrix{x}}{2}^2 + \epsilon}},
\end{align*}
and the following yields the bound on $\cond{\infty,2}{\Layer{RN}, \Matrix{x}}$:
\begin{align*}
    \Norm{\Matrix{K}}{\infty,2} = \max_{\Matrix{v} \in \{ -1,1 \}^d} \Norm{\Matrix{K} \Matrix{v}}{2} &\leq \Norm{\Matrix{g}}{\infty} \max_{\Matrix{v}} \Norm[\bigg]{\Matrix{v} - \Matrix{x} \frac{\Matrix{x}^\trans \Matrix{v}}{\Norm{\Matrix{x}}{2}^2 + \epsilon}}{2} \\
    &= \Norm{\Matrix{g}}{\infty} \max_{\Matrix{v}} \sqrt{\Norm{\Matrix{v}}{2}^2 - 2 \frac{(\Matrix{x}^\trans \Matrix{v})^2}{\Norm{\Matrix{x}}{2}^2 + \epsilon} + \frac{\Norm{\Matrix{x}}{2}^2 (\Matrix{x}^\trans \Matrix{v})^2}{(\Norm{\Matrix{x}}{2}^2 + \epsilon)^2}}.
\end{align*}

\emph{(Mixed)}
The expression for $\cond{\infty,c}{\Layer{RN}, \Matrix{x}}$ is obtained with the same argument as $\cond{\infty,\infty}{\Layer{RN}, \Matrix{x}}$.
To compute $\cond{c,\infty}{\Layer{RN}, \Matrix{x}}$, consider
\begin{align*}
    \Norm{\Matrix{K}\diag{\Matrix{x}}}{\infty,\infty} &= \max_{i} |g_i| \sum_{j} \bigg|\delta_{i,j} - \frac{x_i x_j}{\Norm{\Matrix{x}}{2}^2 + \epsilon}\bigg| |x_j| = \max_{i} |g_i x_i| \left[ \bigg( 1 - \frac{x_i^2}{\Norm{\Matrix{x}}{2}^2 + \epsilon} \bigg) + \frac{\Norm{\Matrix{x}}{2}^2 - x_i^2}{\Norm{\Matrix{x}}{2}^2 + \epsilon} \right].
\end{align*}
The bound on $\cond{c,2}{\Layer{RN}, \Matrix{x}}$ is derived along the same lines as in the case of $\cond{\infty,2}{\Layer{RN}, \Matrix{x}}$, and likewise $\cond{2,c}{\Layer{RN}, \Matrix{x}}$ is akin to $\cond{2,\infty}{\Layer{RN}, \Matrix{x}}$.

\emph{(Componentwise)}
The derivation of $\cond{c,c}{\Layer{RN}, \Matrix{x}}$ is similar to the case of $\cond{\infty,c}{\Layer{RN}, \Matrix{x}}$.
\end{proof}

Next, we bound the condition numbers of the $\Layer{LN}$ normalisation function defined in \eqref{eq:layernorm} as
\begin{equation*}
    \Layer{LN}(\Matrix{x}) = \Layer{RN}(\Matrix{y}), \quad \Matrix{y} = \left(\Matrix{I}_d - \tfrac{1}{d} \One \One^\trans \right) \Matrix{x}.
\end{equation*}

\begin{theorem}
\label{theorem:cond-layernorm}
Let $\Matrix{x} \in \Real^d$.
Normwise condition numbers of $\Layer{LN}$ satisfy
\begin{gather*}
    \cond{2,2}{\Layer{LN}, \Matrix{x}} \leq \frac{\Norm{\Matrix{g}}{\infty} \Norm{\Matrix{x}}{2}}{\Norm{\Matrix{g} \hap \Matrix{y}}{2}}, \quad
    \cond{2,\infty}{\Layer{LN}, \Matrix{x}} = \max_{1 \leq i \leq d} |g_i| \sqrt{1 - \frac{1}{d} - \bigg( 1 + \frac{\epsilon}{\Norm{\Matrix{y}}{2}^2 + \epsilon} \bigg) \frac{y_i^2}{\Norm{\Matrix{y}}{2}^2 + \epsilon}} \frac{\Norm{\Matrix{x}}{2}}{\Norm{\Matrix{g} \hap \Matrix{y}}{\infty}}, \\
    \cond{\infty,2}{\Layer{LN}, \Matrix{x}} \leq \max_{\Matrix{v} \in \{ -1,1 \}^d} \sqrt{d - \frac{(\Matrix{v}^\trans \One)^2}{d} - \bigg( 1 + \frac{\epsilon}{\Norm{\Matrix{y}}{2}^2 + \epsilon} \bigg) \frac{(\Matrix{v}^\trans \Matrix{y})^2}{\Norm{\Matrix{y}}{2}^2 + \epsilon}} \frac{\Norm{\Matrix{g}}{\infty} \Norm{\Matrix{x}}{\infty}}{\Norm{\Matrix{g} \hap \Matrix{y}}{2}}, \\
    \cond{\infty,\infty}{\Layer{LN}, \Matrix{x}} = \max_{1 \leq i \leq d} |g_i| \left(  1 - \frac{1}{d} - \frac{y_i^2}{\Norm{\Matrix{y}}{2}^2 + \epsilon} + \sum_{j \neq i} \bigg|\frac{1}{d} + \frac{y_i y_j}{\Norm{\Matrix{y}}{2}^2 + \epsilon} \bigg| \right) \frac{\Norm{\Matrix{x}}{\infty}}{\Norm{\Matrix{g} \hap \Matrix{y}}{\infty}},
\end{gather*}
its mixed condition numbers satisfy
\begin{gather*}
    \cond{c,\infty}{\Layer{LN}, \Matrix{x}} = \max_{1 \leq i \leq d} |g_i| \left(  \bigg[1 - \frac{1}{d} - \frac{y_i^2}{\Norm{\Matrix{y}}{2}^2 + \epsilon} \bigg] |x_i| + \sum_{j \neq i} \bigg|\frac{1}{d} + \frac{y_i y_j}{\Norm{\Matrix{y}}{2}^2 + \epsilon} \bigg| |x_j| \right) \frac{1}{\Norm{\Matrix{g} \hap \Matrix{y}}{\infty}}, \\
    \cond{\infty,c}{\Layer{LN}, \Matrix{x}} = \max_{1 \leq i \leq d} \left( \frac{d-1}{d |y_i|} - \frac{|y_i|}{\Norm{\Matrix{y}}{2}^2 + \epsilon} + \sum_{j \neq i} \bigg|\frac{1}{d y_i} + \frac{y_j}{\Norm{\Matrix{y}}{2}^2 + \epsilon} \bigg| \right) \Norm{\Matrix{x}}{\infty}, \\
    \cond{c,2}{\Layer{LN}, \Matrix{x}} \leq \max_{\Matrix{v} \in \{ -1,1 \}^d} \sqrt{\Norm{\Matrix{x}}{2}^2 - \frac{(\Matrix{v}^\trans \Matrix{x})^2}{d} - \bigg( 1 + \frac{\epsilon}{\Norm{\Matrix{y}}{2}^2 + \epsilon} \bigg) \frac{(\Matrix{v}^\trans (\Matrix{y} \hap \Matrix{x}))^2}{\Norm{\Matrix{y}}{2}^2 + \epsilon}} \frac{\Norm{\Matrix{g}}{\infty}}{\Norm{\Matrix{g} \hap \Matrix{y}}{2}}, \\
    \cond{2,c}{\Layer{LN}, \Matrix{x}} = \sqrt{\frac{d-1}{d \Norm{\Matrix{y}}{-\infty}^2} - \bigg( 1 + \frac{\epsilon}{\Norm{\Matrix{y}}{2}^2 + \epsilon} \bigg) \frac{1}{\Norm{\Matrix{y}}{2}^2 + \epsilon}} \Norm{\Matrix{x}}{2},
\end{gather*}
and its componentwise condition number equals
\begin{equation*}
    \cond{c,c}{\Layer{LN}, \Matrix{x}} = \max_{1 \leq i \leq d} \left(  \bigg[\frac{d-1}{d |y_i|} - \frac{|y_i|}{\Norm{\Matrix{y}}{2}^2 + \epsilon} \bigg] |x_i| + \sum_{j \neq i} \bigg|\frac{1}{d y_i} + \frac{y_j}{\Norm{\Matrix{y}}{2}^2 + \epsilon} \bigg| |x_j| \right).
\end{equation*}
The inequalities become equalities when $\Matrix{g} = g \One$.
\end{theorem}
\begin{proof}
Using the orthogonality $\One^\trans \Matrix{y} = 0$, we compute the Jacobian
\begin{equation*}
    \Matrix{J}_{\Layer{LN}}(\Matrix{x}) = \frac{\diag{\Matrix{g}}}{\sqrt{\Norm{\Matrix{y}}{2}^2 + \epsilon}} \bigg( \Matrix{I}_d - \frac{\Matrix{y} \Matrix{y}^\trans}{\Norm{\Matrix{y}}{2}^2 + \epsilon} - \frac{\One \One^\trans}{d} \bigg) \in \Real^{d \times d}, \quad \Matrix{K} = \Matrix{J}_{\Layer{LN}}(\Matrix{x}) \sqrt{\Norm{\Matrix{y}}{2}^2 + \epsilon}.
\end{equation*}

\emph{(Normwise)}
The bound on $\cond{2,2}{\Layer{LN}, \Matrix{x}}$ is derived as in Theorem~\ref{theorem:cond-rms-layernorm}.
To compute $\cond{\infty,\infty}{\Layer{LN}, \Matrix{x}}$ via Proposition~\ref{proposition:condition_numbers}, we consider
\begin{align*}
    \Norm{\Matrix{K}}{\infty,\infty} &= \max_{i} |g_i| \sum_{j} \bigg|\delta_{i,j} - \frac{y_i y_j}{\Norm{\Matrix{y}}{2}^2 + \epsilon} - \frac{1}{d} \bigg| = \max_{i} |g_i| \left( \left| 1 - \frac{1}{d} - \frac{y_i^2}{\Norm{\Matrix{y}}{2}^2 + \epsilon} \right| + \sum_{j \neq i} \bigg|\frac{1}{d} + \frac{y_i y_j}{\Norm{\Matrix{y}}{2}^2 + \epsilon} \bigg| \right)
\end{align*}
and observe that $\Matrix{y} \Matrix{y}^\trans / \Norm{\Matrix{y}}{2}^2 + \One \One^\trans / d$ is an orthogonal projection matrix, hence its diagonal entries lie between zero and one.
To derive $\cond{2,\infty}{\Layer{LN}, \Matrix{x}}$, it suffices to collect the terms in
\begin{align*}
    \Norm{\Matrix{K}}{2,\infty} = \max_{i} |g_i| \sqrt{\left( 1 - \frac{1}{d} - \frac{y_i^2}{\Norm{\Matrix{y}}{2}^2 + \epsilon} \right)^2 + \sum_{j \neq i} \bigg(\frac{1}{d} + \frac{y_i y_j}{\Norm{\Matrix{y}}{2}^2 + \epsilon} \bigg)^2}.
\end{align*}
The bound on $\cond{\infty,2}{\Layer{LN}, \Matrix{x}}$ is a result of the following, where we rely on the orthogonality $\One^\trans \Matrix{y} = 0$:
\begin{align*}
    \Norm{\Matrix{K}}{\infty,2} = \max_{\Matrix{v} \in \{ -1,1 \}^d} \Norm{\Matrix{K} \Matrix{v}}{2} &\leq \Norm{\Matrix{g}}{\infty} \max_{\Matrix{v}} \Norm[\bigg]{\Matrix{v} - \Matrix{y} \frac{\Matrix{y}^\trans \Matrix{v}}{\Norm{\Matrix{y}}{2}^2 + \epsilon} - \One \frac{\One^\trans \Matrix{v}}{d}}{2} \\
    &= \Norm{\Matrix{g}}{\infty} \max_{\Matrix{v}} \sqrt{\Norm{\Matrix{v}}{2}^2 - 2 \frac{(\Matrix{y}^\trans \Matrix{v})^2}{\Norm{\Matrix{y}}{2}^2 + \epsilon} + \frac{\Norm{\Matrix{y}}{2}^2 (\Matrix{y}^\trans \Matrix{v})^2}{(\Norm{\Matrix{y}}{2}^2 + \epsilon)^2} - \frac{(\One^\trans \Matrix{v})^2}{d}}.
\end{align*}

\emph{(Mixed and componentiwse)}
The derivation of the remaining condition numbers is analogous.
\end{proof}

\subsection{Condition numbers: Comparison}
\label{subsec:layernorm_comparison}
Let us compare the conditioning of the normalisation functions $\Layer{RN}$ and $\Layer{LN}$.
Mean subtraction can have a significant impact on the numerical stability of normalisation, which we demonstrate with a practically important example of a vector with a massive outlier---the presence of such outliers is characteristic for the inference of modern LLMs.
We assume that $\Matrix{g} = g\One$ and $\epsilon < 1/d$ and focus on four specific condition numbers.
Intermediate steps required to obtain the formulas are provided in Appendix~\ref{appendix:layernorm-cond-comparison}.

\emph{Massive outlier with zero-variance background.}
Let $\Matrix{x} = [1~\alpha~\cdots~\alpha]^\trans$ with $|\alpha| < 1$.
We shall consider an `extreme' scenario with $|\alpha| < \epsilon/2$, a `realistic' scenario with $\alpha = \beta / d$ and moderate $|\beta| \geq 1$, and another `realistic' scenario with $\alpha = \gamma / \sqrt{d}$ and moderate $|\gamma| \geq 1$.
The condition numbers of $\Layer{RN}$ and $\Layer{LN}$ are compared in Table~\ref{tab:rn_ln_zero_variance}.
In the first `realistic' case, the two functions exhibit similar conditioning behaviour overall.
Meanwhile, in the second `realistic' case, $\Layer{LN}$ is strictly better (worse) conditioned than $\Layer{RN}$ when the output errors are measured in the normwise (resp., componentwise) sense.
Note also that normwise input perturbations of the background entries lead to a blow up of their componentwise output perturbations for both normalisation functions.

\begin{table}[t]
\caption{Condition numbers of the normalisation functions $\Layer{RN}$ \eqref{eq:rms-layernorm} and $\Layer{LN}$ \eqref{eq:layernorm} in the presence of a massive outlier with zero-variance background.
The row-maximum in the definition of condition numbers marked with $\dagger$ is achieved at the outlier index ($i = 1$).}
\label{tab:rn_ln_zero_variance}
\begin{tabular}{@{}llcc@{}}
\toprule
Type & Scenario & $\Layer{RN}$ & $\Layer{LN}$ \\
\midrule
$\cond{\infty,\infty}{f,\Matrix{x}}$
& Extreme
& $1 + |\alpha| + \Oh{\epsilon|\alpha| + d \alpha^2}$
& $2(1 + \alpha) + \Oh[\Big]{\frac{1}{d^2} + \alpha^2}$ \\
& Realistic $\Big(\tfrac{\beta}{d}\Big)$
& $|\beta| - \frac{|\beta| + \beta^2(|\beta| - 1)}{d} + \Oh[\Big]{\epsilon + \frac{1}{d^2}}^\dagger$
& $2\Big(1 + \frac{\beta}{d}\Big) + \Oh[\Big]{\frac{1}{d^2}}$ \\
& Realistic $\Big(\tfrac{\gamma}{\sqrt{d}}\Big)$
& $\frac{|\gamma|\sqrt{d} + \gamma^2}{1+\gamma^2} + \Oh[\Big]{\frac{1}{\sqrt{d}}}^\dagger$
& $2\Big(1 + \frac{\gamma}{\sqrt{d}}\Big) + \Oh[\Big]{\frac{1}{d}}$ \\
\midrule
$\cond{c,\infty}{f,\Matrix{x}}$
& Extreme
& $\epsilon + 2d\alpha^2 + \Oh{\epsilon^2}^\dagger$
& $\epsilon + \frac{\epsilon}{d} + \Oh{\epsilon^2}^\dagger$ \\
& Realistic $\Big(\tfrac{\beta}{d}\Big)$
& $\epsilon + \frac{2\beta^2}{d} + \Oh[\Big]{\frac{1}{d^2}}^\dagger$
& $\frac{2|\beta| + \epsilon}{d} + \Oh[\Big]{\frac{1}{d^2}}$ \\
& Realistic $\Big(\tfrac{\gamma}{\sqrt{d}}\Big)$
& $\frac{2\gamma^2}{1+\gamma^2} - \frac{2\gamma^2}{d(1+\gamma^2)^2} + \Oh[\Big]{\epsilon + \frac{1}{d^2}}^\dagger$
& $\frac{2|\gamma|}{\sqrt{d}} + \frac{2\gamma|\gamma|}{d} + \Oh[\Big]{\frac{1}{d^{3/2}}}$ \\
\midrule
$\cond{\infty,c}{f,\Matrix{x}}$
& Extreme
& $\frac{1}{|\alpha|} + 1 + \Oh{\epsilon + d|\alpha|}$
& $2(d-1)(1 + \alpha) + \Oh[\Big]{\frac{1}{d}}$ \\
& Realistic $\Big(\tfrac{\beta}{d}\Big)$
& $\frac{d}{|\beta|} + 1 + |\beta| + \Oh[\Big]{\frac{1}{d}}$
& $2(d-1+\beta) + \Oh[\Big]{\frac{1}{d}}$ \\
& Realistic $\Big(\tfrac{\gamma}{\sqrt{d}}\Big)$
& $\left(\frac{1}{|\gamma|} + \frac{|\gamma|}{1+\gamma^2}\right)\sqrt{d} + \frac{1}{1+\gamma^2} + \Oh[\Big]{\frac{1}{\sqrt{d}}}$
& $2(d - 1 + \gamma\sqrt{d} + \gamma^2) + \Oh[\Big]{\frac{1}{\sqrt{d}}}$ \\
\midrule
$\cond{c,c}{f,\Matrix{x}}$
& Extreme
& $2 - \epsilon + \Oh{\epsilon^2}$
& $\epsilon + 2d|\alpha| + \Oh[\Big]{|\alpha| + \frac{\epsilon}{d}}$ \\
& Realistic $\Big(\tfrac{\beta}{d}\Big)$
& $2 - \epsilon + \Oh[\Big]{\frac{1}{d^2}}$
& $2|\beta| + \frac{2|\beta|(\beta-1)}{d} + \Oh[\Big]{\epsilon + \frac{1}{d^2}}$ \\
& Realistic $\Big(\tfrac{\gamma}{\sqrt{d}}\Big)$
& $2 - \frac{2\gamma^2}{d(1+\gamma^2)} + \Oh[\Big]{\epsilon + \frac{1}{d^2}}$
& $2|\gamma|\sqrt{d} + 2\gamma|\gamma| + \Oh[\Big]{\frac{1}{\sqrt{d}}}$ \\
\midrule
\end{tabular}
\end{table}

\emph{Massive outlier with zero-mean background.}
Assume that $d$ is odd and let $\Matrix{x} = [1~\alpha~-\alpha~\cdots~\alpha~-\alpha]^\trans$ with $0 \leq \alpha < 1 - 2/d$.
We consider an `extreme' scenario with $\alpha < \epsilon/2$, a `realistic' scenario with $\alpha = \beta / d$ and moderate $\beta \geq 3$, and another `realistic' scenario with $\alpha = \gamma / \sqrt{d}$ and moderate $\gamma \geq 1$.
The condition numbers are presented in Table~\ref{tab:rn_ln_zero_mean}.
They show that the the conditioning of $\Layer{RN}$ and $\Layer{LN}$ is almost identical when the output errors are measured in the normwise sense.
At the same time, the condition numbers of $\Layer{RN}$ are smaller than those of $\Layer{LN}$ in the `realistic' cases when the output errors are measured componentwise.
Again, the mixed $\ell_\infty$-to-componentwise condition numbers of both functions blow up.

\begin{table}[t]
\caption{Condition numbers of the normalisation functions $\Layer{RN}$ \eqref{eq:rms-layernorm} and $\Layer{LN}$ \eqref{eq:layernorm} in the presence of a massive outlier with zero-mean background.
The row-maximum in the definition of condition numbers marked with $\dagger$ is achieved at the outlier index ($i = 1$).}
\label{tab:rn_ln_zero_mean}
\begin{tabular}{@{}llcc@{}}
\toprule
Type & Scenario & $\Layer{RN}$ & $\Layer{LN}$ \\
\midrule
$\cond{\infty,\infty}{f,\mathbf{x}}$
& Extreme
& $1 + \alpha + \Oh{\epsilon\alpha + d \alpha^2}$
& $2 - \frac{2}{d^2} + \Oh[\Big]{\frac{\alpha}{d} + \frac{1}{d^3}}$ \\
& Realistic $\Big(\tfrac{\beta}{d}\Big)$
& $\beta - \frac{\beta + \beta^2(\beta - 1)}{d} + \Oh[\Big]{\epsilon + \frac{1}{d^2}}^\dagger$
& $\beta - \frac{\beta^2(\beta-1)}{d} + \Oh[\Big]{\epsilon + \frac{1}{d^2}}^\dagger$ \\
& Realistic $\Big(\tfrac{\gamma}{\sqrt{d}}\Big)$
& $\frac{\gamma\sqrt{d} + \gamma^2}{1+\gamma^2} + \Oh[\Big]{\frac{1}{\sqrt{d}}}^\dagger$
& $\frac{\gamma\sqrt{d} + \gamma^2}{1+\gamma^2} + \Oh[\Big]{\frac{1}{\sqrt{d}}}^\dagger$ \\
\midrule
$\cond{c,\infty}{f,\mathbf{x}}$
& Extreme
& $\epsilon + 2d\alpha^2 + \Oh{\epsilon^2}^\dagger$
& $\epsilon + d\alpha^2 + \Oh[\Big]{\frac{\epsilon}{d}}^\dagger$ \\
& Realistic $\Big(\tfrac{\beta}{d}\Big)$
& $\epsilon + \frac{2\beta^2}{d} + \Oh[\Big]{\frac{1}{d^2}}^\dagger$
& $\epsilon + \frac{2\beta^2}{d} + \Oh[\Big]{\frac{1}{d^2}}^\dagger$ \\
& Realistic $\Big(\tfrac{\gamma}{\sqrt{d}}\Big)$
& $\frac{2\gamma^2}{1+\gamma^2} - \frac{2\gamma^2}{d(1+\gamma^2)^2} + \Oh[\Big]{\epsilon + \frac{1}{d^2}}^\dagger$
& $\frac{2\gamma^2}{1+\gamma^2} + \frac{1 - \gamma^2}{(1+\gamma^2)^2}\epsilon + \Oh[\Big]{\frac{\epsilon}{d}}^\dagger$ \\
\midrule
$\cond{\infty,c}{f,\mathbf{x}}$
& Extreme
& $\frac{1}{\alpha} + 1 + \Oh{\epsilon + d|\alpha|}$
& $\frac{2(d-1)}{1-d\alpha} + \Oh[\Big]{\frac{1}{d}}$ \\
& Realistic $\Big(\tfrac{\beta}{d}\Big)$
& $\frac{d}{\beta} + 1 + \beta + \Oh[\Big]{\frac{1}{d}}$
& $\frac{2d}{\beta-1} - \frac{2}{\beta-1} + \Oh[\Big]{\frac{1}{d}}$ \\
& Realistic $\Big(\tfrac{\gamma}{\sqrt{d}}\Big)$
& $\left(\frac{1}{\gamma} + \frac{\gamma}{1+\gamma^2}\right)\sqrt{d} + \frac{1}{1+\gamma^2} + \Oh[\Big]{\frac{1}{\sqrt{d}}}$
& $\frac{2\sqrt{d}}{\gamma} + \frac{2}{\gamma^2} + \Oh[\Big]{\frac{1}{\sqrt{d}}}$ \\
\midrule
$\cond{c,c}{f,\mathbf{x}}$
& Extreme
& $2 - \epsilon + \Oh{\epsilon^2}$
& $\epsilon + 3d\alpha + \Oh[\Big]{d^2\alpha^2 + \alpha + \frac{\epsilon}{d}}$ \\
& Realistic $\Big(\tfrac{\beta}{d}\Big)$
& $2 - \epsilon + \Oh[\Big]{\frac{1}{d^2}}$
& $\frac{3\beta}{\beta-1} - \frac{3\beta}{d(\beta-1)} + \Oh[\Big]{\epsilon + \frac{1}{d^2}}$ \\
& Realistic $\Big(\tfrac{\gamma}{\sqrt{d}}\Big)$
& $2 - \frac{2\gamma^2}{d(1+\gamma^2)} + \Oh[\Big]{\epsilon + \frac{1}{d^2}}$
& $3 + \frac{3}{\gamma \sqrt{d}} + \Oh[\Big]{\frac{1}{d}}$ \\
\midrule
\end{tabular}
\end{table}

\subsection{Forward error}
Let us bound the rounding error of layer normalisation computed in FP arithmetic.
In the analysis for both $\Layer{RN}$ \eqref{eq:rms-layernorm} and $\Layer{LN}$ \eqref{eq:layernorm}, we assume that the summations required to compute the $\ell_2$ norm and the mean are calculated using the same accumulation algorithm as in Model~\ref{model:fp_gemm}, excluding the quantisation terms.
This allows us to reuse the constant $\gamma_{MM}$ for consistency and clarity.

\begin{theorem}
\label{theorem:rounding-both-layernorm}
For every $\Matrix{x} \in \Real^d$, the value of $\Layer{RN}(\Matrix{x})$ computed in FP arithmetic satisfies
\begin{equation*}
    | \fl{\Layer{RN}(\Matrix{x})} - \Layer{RN}(\Matrix{x}) | \leq \Big( \tfrac{7}{2} \uh_{w} + \tfrac{1}{2} \gamma_{MM}(d, \uh_w) + \Oh{\uh_{w}^2} \Big) |\Layer{RN}(\Matrix{x})|.
\end{equation*}
If the mean-centred $\Matrix{y} = (\Matrix{I}_d - \tfrac{1}{d} \One \One^\trans) \Matrix{x}$ is non-zero, the value of $\Layer{LN}(\Matrix{x})$ computed in FP arithmetic satisfies
\begin{multline*}
    | \fl{\Layer{LN}(\Matrix{x})} - \Layer{LN}(\Matrix{x}) | \leq \Big(\tfrac{7}{2} \uh_{w} + \tfrac{1}{2} \gamma_{MM}(d, \uh_w) + + \Oh{\uh_w^2} \Big) |\Layer{LN}(\Matrix{x})| \\
    + \cond{\infty,\infty}{\Layer{RN},\Matrix{y}} \bigg( \uh_{w} + \Big(\uh_{w} + \gamma_{MM}(d, \uh_w)\Big) \frac{\Norm{\Matrix{x}}{1}}{d \Norm{\Matrix{y}}{\infty}} \bigg) \Norm{\Layer{LN}(\Matrix{x})}{\infty} \One.
\end{multline*}
\end{theorem}
\begin{proof}
For the $\Layer{RN}$ function, we begin by accumulating the squared normalisation factor:
\begin{equation*}
    | \fl{\Norm{\Matrix{x}}{2}^2 + \epsilon} - (\Norm{\Matrix{x}}{2}^2 + \epsilon) | \leq \big(\uh_{w} + \gamma_{MM}(d,\uh_w)\big) (\Norm{\Matrix{x}}{2}^2 + \epsilon).
\end{equation*}
The square root, the Hadamard product with gain $\Matrix{g}$, and the division by the denominator are correctly rounded to precision $\uh_{w}$ according to Model~\ref{model:fp_arithmetic}, whence the bound follows.

For the $\Layer{LN}$ function, denote by $\Matrix{\hat{y}} \in \Real^d$ the computed value of $\Matrix{y}$.
By triangle inequality,
\begin{equation*}
    | \fl{\Layer{LN}(\Matrix{x})} - \Layer{LN}(\Matrix{x}) | \leq | \fl{\Layer{RN}(\Matrix{\hat{y}})} - \Layer{RN}(\Matrix{\hat{y}}) | + | \Layer{RN}(\Matrix{\hat{y}}) - \Layer{RN}(\Matrix{y}) |.
\end{equation*}
As we have just shown, the first term is bounded by
\begin{align*}
    | \fl{\Layer{RN}(\Matrix{\hat{y}})} - \Layer{RN}(\Matrix{\hat{y}}) | &\leq \Big( \tfrac{7}{2} \uh_{w} + \tfrac{1}{2} \gamma_{MM}(d, \uh_w) + \Oh{\uh_{w}^2} \Big) |\Layer{RN}(\Matrix{\hat{y}})| \\
    &\leq \Big( \tfrac{7}{2} \uh_{w} + \tfrac{1}{2} \gamma_{MM}(d, \uh_w) + \Oh{\uh_{w}^2} \Big) \Big(|\Layer{RN}(\Matrix{y})| + |\Layer{RN}(\Matrix{\hat{y}}) - \Layer{RN}(\Matrix{y})|\Big).
\end{align*}
We bound the second term using Proposition~\ref{proposition:sensitivity_bound} in $\ell_\infty$-normwise case:
\begin{equation*}
    |\Layer{RN}(\Matrix{\hat{y}}) - \Layer{RN}(\Matrix{y})| \leq \Big( \cond{\infty,\infty}{\Layer{RN},\Matrix{y}} \dist{\infty}{\Matrix{\hat{y}}, \Matrix{y}} + \Oh[\big]{\dist{\infty}{\Matrix{\hat{y}}, \Matrix{y}}^2} \Big) \Norm{\Layer{RN}(\Matrix{y})}{\infty} \One.
\end{equation*}
Finally, the computed mean-centred vector $\Matrix{\hat{y}}$ satisfies
\begin{equation*}
    | \Matrix{\hat{y}} - \Matrix{y} | \leq \uh_{w} |\Matrix{y}| + \big( \uh_{w} + \gamma_{MM}(d, \uh_w) \big) \tfrac{1}{d} \Norm{\Matrix{x}}{1} \One.
\end{equation*}
We obtain the result by combining these bounds.
\end{proof}

Since $\uh_w \leq \uh_a$ by assumption, the bounds in Theorem~\ref{theorem:rounding-both-layernorm} can be simplified to
\begin{gather*}
    | \fl{\Layer{RN}(\Matrix{x})} - \Layer{RN}(\Matrix{x}) | \lesssim \gamma_{MM}(d,\uh_w) |\Layer{RN}(\Matrix{x})|, \\
    | \fl{\Layer{LN}(\Matrix{x})} - \Layer{LN}(\Matrix{x}) | \lesssim \gamma_{MM}(d,\uh_w) |\Layer{LN}(\Matrix{x})| + \cond{\infty,\infty}{\Layer{RN},\Matrix{y}} \bigg( \uh_{w} + \gamma_{MM}(d,\uh_w) \frac{\Norm{\Matrix{x}}{1}}{d \Norm{\Matrix{y}}{\infty}} \bigg) \Norm{\Layer{LN}(\Matrix{x})}{\infty} \One.
\end{gather*}
The FP computation of $\Layer{RN}$ is unconditionally forward stable with forward error governed by the chosen summation algorithm.
The case of $\Layer{LN}$ is more subtle since $\cond{\infty,\infty}{\Layer{RN},\Matrix{y}}$ is upper bounded by
\begin{equation}
\label{eq:cond-rn-universal-bound}
    \cond{\infty,\infty}{\Layer{RN},\Matrix{y}} \leq \frac{\sqrt{d} + 1}{2} \frac{\Norm{\Matrix{g}}{\infty} \Norm{\Matrix{y}}{\infty}}{\Norm{\Matrix{g} \hap \Matrix{y}}{\infty}}.
\end{equation}
When $\epsilon = 0$ and $\Matrix{g} = \One$, this bound is attained at vectors $\Matrix{y}$ of the form
\begin{equation*}
    |y_1| = 1, \quad |y_2| = \cdots = |y_d| = \frac{1}{\sqrt{d} - 1},
\end{equation*}
which falls precisely within our second `realistic' model of massive outliers.
Assuming that $\Matrix{x}$ has zero mean and $\Matrix{x} = \Matrix{y}$, we conclude that the rounding error bound for this example is
\begin{equation*}
    | \fl{\Layer{LN}(\Matrix{x})} - \Layer{LN}(\Matrix{x}) | \lesssim \Big( \sqrt{d} \uh_{w} + \gamma_{MM}(d,\uh_w) \Big) \Norm{\Layer{LN}(\Matrix{x})}{\infty} \One.
\end{equation*}
Regardless of the summation algorithm used to subtract the mean, our bound on the rounding error scales at least as $\sqrt{d} \uh_{w}$ when $\Matrix{x}$ has a massive outlier with background oscillations of order $1 / \sqrt{d}$.

In contrast, when the background oscillations about the mean are of order $1 / d$, Table~\ref{tab:rn_ln_zero_mean} shows that the condition number satisfies $\cond{\infty,\infty}{\Layer{RN},\Matrix{y}} = \Oh{1}$, which yields
\begin{equation*}
    | \fl{\Layer{LN}(\Matrix{x})} - \Layer{LN}(\Matrix{x}) | \lesssim \gamma_{MM}(d,\uh_w) |\Layer{LN}(\Matrix{x})| + \uh_{w} \Norm{\Layer{LN}(\Matrix{x})}{\infty} \One.
\end{equation*}
\section{Analysis of feedforward mechanisms}
\label{sec:ffn}

\subsection{Two-layer feedforward mechanism}
Next, we consider the two-layer FF mechanism as defined in \eqref{eq:ffn-gelu}:
\begin{equation*}
    \Layer{F}(\Matrix{x}) = \Matrix{W}_{down} \sigma^\ast(\Matrix{W}_{up} \Matrix{x}), \quad \Matrix{W}_{up} \in \Real^{D \times d}, \quad \Matrix{W}_{down} \in \Real^{d \times D}.
\end{equation*}
Recall that $\sigma^\ast$ denotes the componentwise application of the activation function $\sigma : \Real \to \Real$, which we assume to be a \emph{smooth ReLU-like function}.
We say that a smooth function $\sigma$ is ReLU-like if for every $\epsilon > 0$ there exists $\tau_{\epsilon} > 0$ such that
\begin{equation*}
    \max\{|\sigma(x)|, |\sigma'(x)|\} < \epsilon, \quad x < -\tau_{\epsilon}, \qquad
    \max\{|\sigma(x) / x - 1|, |\sigma'(x) - 1| < \epsilon, \quad x > \tau_{\epsilon}.
\end{equation*}
The ReLU function itself satisfies this property with $\tau_{\epsilon} = 0$ for every $\epsilon$, though it is not differentiable at zero.
By definition, the derivative of every smooth ReLU-like function is uniformly bounded:
\begin{equation*}
    \Norm{\sigma'}{C(\Real)} = \sup_{x \in \Real} |\sigma'(x)| \leq \max\left\{ 1 + \epsilon, \Norm{\sigma'}{C([\pm \tau_{\epsilon}])} \right\}, \quad [\pm \tau_{\epsilon}] = [-\tau_\epsilon, \tau_\epsilon].
\end{equation*}
A specific example to have in mind is the GELU function \citep{hendrycks2016gaussian}.

\begin{theorem}
\label{theorem:cond-ffn-gelu}
Let $\sigma$ be a smooth ReLU-like function, and let $\Matrix{x} \in \Real^d$.
For every $\epsilon > 0$ and every triplet $1 \leq p,q,r \leq \infty$, normwise condition numbers of $\Layer{F}$ are bounded by
\begin{equation*}
    \cond{p,q}{\Layer{F}, \Matrix{x}} \leq \Big( \epsilon \nu_{-} + \Norm{\sigma'}{C([\pm \tau_{\epsilon}])} \nu_{0} + (1 + \epsilon) \nu_{+} \Big) \frac{\Norm{\Matrix{x}}{p}}{\Norm{\Layer{F}(\Matrix{x})}{q}},
\end{equation*}
where $\nu_{\theta} = \Norm{\Matrix{W}_{down}^{(\theta)}}{r,q} \Norm{\Matrix{W}_{up}^{(\theta)}}{p,r}$ are defined for $\theta \in \{ -, 0, + \}$ based on $\Matrix{y} = \Matrix{W}_{up} \Matrix{x}$ as
\begin{gather*}
    \Matrix{W}_{down}^{(\theta)} = \Matrix{W}_{down}(:, \Omega_{\theta}), \quad \Matrix{W}_{up}^{(\theta)} = \Matrix{W}_{up}(\Omega_{\theta}, :), \\
    \Omega_{-} = \set{i}{y_i < -\tau_{\epsilon}}, \quad \Omega_{0} = \set{i}{|y_i| \leq \tau_{\epsilon}}, \quad \Omega_{+} = \set{i}{y_i > \tau_{\epsilon}}.
\end{gather*}
Its mixed condition numbers are bounded by
\begin{gather*}
    \cond{c,q}{\Layer{F}, \Matrix{x}} \leq \Big( \epsilon \xi_{-} + \Norm{\sigma'}{C([\pm \tau_{\epsilon}])} \xi_{0} + (1 + \epsilon) \xi_{+} \Big) \frac{1}{\Norm{\Layer{F}(\Matrix{x})}{q}}, \\
    \cond{p,c}{\Layer{F}, \Matrix{x}} \leq \Big( \epsilon \eta_{-} + \Norm{\sigma'}{C([\pm \tau_{\epsilon}])} \eta_{0} + (1 + \epsilon) \eta_{+} \Big) \Norm{\Matrix{x}}{p},
\end{gather*}
where
\begin{equation*}
    \xi_{\theta} = \Norm{\Matrix{W}_{down}^{(\theta)}}{r,q} \Norm{\Matrix{W}_{up}^{(\theta)} \diag{\Matrix{x}}}{\infty,r}, \quad \eta_{\theta} = \Norm{\diag{\Layer{F}(\Matrix{x})}^{-1} \Matrix{W}_{down}^{(\theta)}}{r,\infty} \Norm{\Matrix{W}_{up}^{(\theta)}}{p,r}.
\end{equation*}
The componentwise condition number is bounded by
\begin{equation*}
    \cond{c,c}{\Layer{F}, \Matrix{x}} \leq \Big( \epsilon \chi_{-} + \Norm{\sigma'}{C([\pm \tau_{\epsilon}])} \chi_{0} + (1 + \epsilon) \chi_{+} \Big),
\end{equation*}
where $\chi_{\theta} = \Norm{\diag{\Layer{F}(\Matrix{x})}^{-1} \Matrix{W}_{down}^{(\theta)}}{r,\infty} \Norm{\Matrix{W}_{up}^{(\theta)} \diag{\Matrix{x}}}{\infty,r}$.
\end{theorem}
\begin{proof}
The Jacobian of $\Layer{F}$ is given by $\Matrix{J}_{\Layer{F}}(\Matrix{x}) = \Matrix{W}_{down} \diag[\big]{{\sigma'}^{\ast}(\Matrix{y})} \Matrix{W}_{up}$.
The proof then follows from Proposition~\ref{proposition:condition_numbers}, the triangle inequality, and the submultiplicativity of matrix operator norms.
\end{proof}

Note that Theorem~\ref{theorem:cond-ffn-gelu} holds for the ReLU activation function provided that $\Matrix{W}_{up} \Matrix{x}$ contains no zeros.
In this case, we select $\epsilon = 0$ and $\tau_{\epsilon} = 0$, and only the third term remains in each bound.
Furthermore, the theorem holds for any $\tau_\epsilon$ from the definition of a smooth ReLU-like function; in the limit of $\tau_\epsilon \to \infty$,
\begin{equation}
\label{eq:cond-ffn-gelu-universal}
    \cond{p,q}{\Layer{F}, \Matrix{x}} \leq \Norm{\sigma'}{C(\Real)} \Norm{\Matrix{W}_{down}}{r,q} \Norm{\Matrix{W}_{up}}{p,r} \frac{\Norm{\Matrix{x}}{p}}{\Norm{\Layer{F}(\Matrix{x})}{q}}
\end{equation}
is a universal upper bound, and similar bounds hold for other condition numbers.

In practice, the GELU activation function is typically replaced with its $\tanh$-approximation
\begin{equation}
\label{eq:gelu-tanh}
    \sigma(x) = \frac{x}{2} \left[ 1 + \tanh\left( \sqrt{\frac{2}{\pi}} (x + 0.044715 x^3) \right) \right],
\end{equation}
which is also a smooth ReLU-like function.
The following theorem explicitly concerns \eqref{eq:gelu-tanh}.

\begin{theorem}
\label{theorem:rounding-ffn-gelu-tanh}
For every $\Matrix{x} \in \Real^d$ and the $\tanh$-approximation of the GELU activation function \eqref{eq:gelu-tanh}, the value of $\Layer{F}(\Matrix{x})$ computed in FP arithmetic satisfies
\begin{equation*}
    |\fl{\Layer{F}(\Matrix{x})} - \Layer{F}(\Matrix{x})| \leq \Big( \tfrac{21}{4} \uh_{w} + \tfrac{9}{2} \uh_{q} + \tfrac{5}{4} \gamma_{MM}(d) + \gamma_{MM}(D) + \Oh{\uh_{q}^2} \Big) |\Matrix{W}_{down}| |\Matrix{W}_{up}| |\Matrix{x}|.
\end{equation*}
\end{theorem}
\begin{proof}
According to Model~\ref{model:fp_gemm}, we have
\begin{equation*}
    |\fl{\Matrix{W}_{up} \Matrix{x}} - \Matrix{W}_{up} \Matrix{x}| \leq \Big( 2 \uh_{q} + \gamma_{MM}(d) + \Oh{\uh_{q}^2} \Big) |\Matrix{W}_{up}| |\Matrix{x}|.
\end{equation*}
Let $f(y) = \sqrt{2 / \pi} (y + 0.044715 \cdot y^3)$ and $g(y) = \tfrac{1}{2} (1 + \tanh f(y))$.
Model~\ref{model:fp_arithmetic} yields
\begin{equation*}
    |\fl{f^\ast(\Matrix{W}_{up} \Matrix{x})} - f^\ast(\Matrix{W}_{up} \Matrix{x})| \leq 5 \uh_{w} |f^\ast(\Matrix{W}_{up} \Matrix{x})| + |f'^\ast(\Matrix{W}_{up} \Matrix{x})| \Big( 2 \uh_{q} + \gamma_{MM}(d) \Big) |\Matrix{W}_{up}| |\Matrix{x}| + \Oh{\uh_{q}^2},
\end{equation*}
which we use to derive
\begin{multline*}
    |\fl{\sigma^\ast(\Matrix{W}_{up} \Matrix{x})} - \sigma^\ast(\Matrix{W}_{up} \Matrix{x})| \leq 4 \uh_{w} |\sigma^\ast(\Matrix{W}_{up} \Matrix{x})| + 5 \uh_{w} \frac{|\Matrix{W}_{up} \Matrix{x}|}{2} \tanh'^{\ast}\Big(f^\ast(\Matrix{W}_{up} \Matrix{x})\Big) |f^\ast(\Matrix{W}_{up} \Matrix{x})| \\
    + \Big( 2 \uh_{q} + \gamma_{MM}(d) \Big) \Big( g^\ast(\Matrix{W}_{up} \Matrix{x}) + |\Matrix{W}_{up} \Matrix{x}| g'^\ast(\Matrix{W}_{up} \Matrix{x}) \Big) |\Matrix{W}_{up}| |\Matrix{x}| + \Oh{\uh_{q}^2}.
\end{multline*}
In the second and third terms, the function $\tanh'(y) |y|$ is bounded by $1/2$ from above for every $y \in \Real$, and $g(y) + |y| g'(y)$ is upper bounded by $8/7$.
(Both are rational upper bounds of the numerically computed maxima.)
This results in a simplified bound
\begin{equation*}
    |\fl{\sigma^\ast(\Matrix{W}_{up} \Matrix{x})} - \sigma^\ast(\Matrix{W}_{up} \Matrix{x})| \leq 4 \uh_{w} |\sigma^\ast(\Matrix{W}_{up} \Matrix{x})| + \tfrac{5}{4} \Big( \uh_{w} + 2 \uh_{q} + \gamma_{MM}(d) \Big) |\Matrix{W}_{up}| |\Matrix{x}| + \Oh{\uh_{q}^2}.
\end{equation*}
Applying Model~\ref{model:fp_gemm} again and using $|\sigma^\ast(\Matrix{W}_{up} \Matrix{x})| \leq |\Matrix{W}_{up} \Matrix{x}| \leq |\Matrix{W}_{up}| |\Matrix{x}|$, we obtain the final bound.
\end{proof}

Theorem~\ref{theorem:cond-ffn-gelu} shows that the FP computation of the (approximate) GELU FF mechanism function is forward stable.
Under our assumption on the ordering of precisions, the forward error is determined primarily by the two GEMMs and the quantisation of their operands.

\subsection{Three-layer gated feedforward mechanism}
Recall the definition of the three-layer gated FF mechanism \eqref{eq:ffn-swiglu}:
\begin{equation*}
    \Layer{FG}(\Matrix{x}) = \Matrix{W}_{down} \Big( \sigma^\ast(\Matrix{W}_{gate}\Matrix{x}) \hap \Matrix{W}_{up} \Matrix{x} \Big), \quad \Matrix{W}_{up} \in \Real^{D \times d}, \quad \Matrix{W}_{gate} \in \Real^{D \times d}, \quad \Matrix{W}_{down} \in \Real^{d \times D}.
\end{equation*}
We assume again that the gating function $\sigma$ is a smooth ReLU-like function.

\begin{theorem}
\label{theorem:cond-ffn-swiglu}
Let $\sigma$ be a smooth ReLU-like function, and let $\Matrix{x} \in \Real^d$.
For every $\epsilon > 0$ and every triplet $1 \leq p,q,r \leq \infty$, normwise condition numbers of $\Layer{FG}$ are bounded by
\begin{equation*}
    \cond{p,q}{\Layer{FG}, \Matrix{x}} \leq \Big( \epsilon (\nu_{-} + \tilde{\nu}_{-}) + \Norm{\sigma'}{C([\pm \tau_{\epsilon}])} \nu_{0} + \Norm{\sigma}{C([\pm \tau_{\epsilon}])} \tilde{\nu}_{0} + (1 + \epsilon) (\nu_{+} + \Norm{\Matrix{W}_g^{(+)} \Matrix{x}}{\infty} \tilde{\nu}_{+}) \Big) \frac{\Norm{\Matrix{x}}{p}}{\Norm{\Layer{FG}(\Matrix{x})}{q}},
\end{equation*}
where
\begin{equation*}
    \nu_{\theta} = \Norm{\Matrix{W}_{down}^{(\theta)}}{r,q} \Norm{\diag{\Matrix{W}_{up}^{(\theta)} \Matrix{x}} \Matrix{W}_{gate}^{(\theta)}}{p,r}, \quad \tilde{\nu}_{\theta} = \Norm{\Matrix{W}_{down}^{(\theta)}}{r,q} \Norm{\Matrix{W}_{up}^{(\theta)}}{p,r}
\end{equation*}
are defined for $\theta \in \{ -, 0, + \}$ based on $\Matrix{y} = \Matrix{W}_{gate} \Matrix{x}$ as
\begin{gather*}
    \Matrix{W}_{down}^{(\theta)} = \Matrix{W}_{down}(:, \Omega_{\theta}), \quad \Matrix{W}_{up}^{(\theta)} = \Matrix{W}_{up}(\Omega_{\theta}, :), \quad \Matrix{W}_{gate}^{(\theta)} = \Matrix{W}_{gate}(\Omega_{\theta}, :), \\
    \Omega_{-} = \set{i}{y_i < -\tau_{\epsilon}}, \quad \Omega_{0} = \set{i}{|y_i| \leq \tau_{\epsilon}}, \quad \Omega_{+} = \set{i}{y_i > \tau_{\epsilon}}.
\end{gather*}
Its mixed condition numbers are bounded by
\begin{gather*}
    \cond{c,q}{\Layer{F}, \Matrix{x}} \leq \Big( \epsilon (\xi_{-} + \tilde{\xi}_{-}) + \Norm{\sigma'}{C([\pm \tau_{\epsilon}])} \xi_{0} + \Norm{\sigma}{C([\pm \tau_{\epsilon}])} \tilde{\xi}_{0} + (1 + \epsilon) (\xi_{+} + \Norm{\Matrix{W}_g^{(+)} \Matrix{x}}{\infty} \tilde{\xi}_{+}) \Big) \frac{1}{\Norm{\Layer{FG}(\Matrix{x})}{q}}, \\
    \cond{p,c}{\Layer{F}, \Matrix{x}} \leq \Big( \epsilon (\eta_{-} + \tilde{\eta}_{-}) + \Norm{\sigma'}{C([\pm \tau_{\epsilon}])} \eta_{0} + \Norm{\sigma}{C([\pm \tau_{\epsilon}])} \tilde{\eta}_{0} + (1 + \epsilon) (\eta_{+} + \Norm{\Matrix{W}_g^{(+)} \Matrix{x}}{\infty} \tilde{\eta}_{+}) \Big) \Norm{\Matrix{x}}{p},
\end{gather*}
where
\begin{gather*}
    \xi_{\theta} = \Norm{\Matrix{W}_{down}^{(\theta)}}{r,q} \Norm{\diag{\Matrix{W}_{up}^{(\theta)} \Matrix{x}} \Matrix{W}_{gate}^{(\theta)} \diag{\Matrix{x}}}{\infty,r}, \quad \tilde{\xi}_{\theta} = \Norm{\Matrix{W}_{down}^{(\theta)}}{r,q} \Norm{\Matrix{W}_{up}^{(\theta)} \diag{\Matrix{x}}}{\infty,r}, \\
    \eta_{\theta} = \Norm{\diag{\Layer{FG}(\Matrix{x})}^{-1} \Matrix{W}_{down}^{(\theta)}}{r,\infty} \Norm{\diag{\Matrix{W}_{up}^{(\theta)} \Matrix{x}} \Matrix{W}_{gate}^{(\theta)}}{p,r}, \quad \tilde{\eta}_{\theta} = \Norm{\diag{\Layer{FG}(\Matrix{x})}^{-1} \Matrix{W}_{down}^{(\theta)}}{r,\infty} \Norm{\Matrix{W}_{up}^{(\theta)}}{p,r}.
\end{gather*}
The componentwise condition number is bounded by
\begin{equation*}
    \cond{c,c}{\Layer{F}, \Matrix{x}} \leq \Big( \epsilon (\chi_{-} + \tilde{\chi}_{-}) + \Norm{\sigma'}{C([\pm \tau_{\epsilon}])} \chi_{0} + \Norm{\sigma}{C([\pm \tau_{\epsilon}])} \tilde{\chi}_{0} + (1 + \epsilon) (\chi_{+} + \Norm{\Matrix{W}_g^{(+)} \Matrix{x}}{\infty} \tilde{\chi}_{+}) \Big),
\end{equation*}
where
\begin{gather*}
    \chi_{\theta} = \Norm{\diag{\Layer{FG}(\Matrix{x})}^{-1} \Matrix{W}_{down}^{(\theta)}}{r,\infty} \Norm{\diag{\Matrix{W}_{up}^{(\theta)} \Matrix{x}} \Matrix{W}_{gate}^{(\theta)} \diag{\Matrix{x}}}{\infty,r}, \\
    \tilde{\chi}_{\theta} = \Norm{\diag{\Layer{FG}(\Matrix{x})}^{-1} \Matrix{W}_{down}^{(\theta)}}{r,\infty} \Norm{\Matrix{W}_{up}^{(\theta)} \diag{\Matrix{x}}}{\infty,r}.
\end{gather*}
\end{theorem}
\begin{proof}
The Jacobian of $\Layer{FG}$ at $\Matrix{x}$ equals
\begin{equation*}
    \Matrix{J}_{\Layer{FG}}(\Matrix{x}) = \Matrix{W}_{down} \diag[\Big]{\sigma'^\ast (\Matrix{W}_{gate} \Matrix{x})} \diag{\Matrix{W}_{up} \Matrix{x}} \Matrix{W}_{gate} + \Matrix{W}_{down} \diag[\Big]{\sigma^\ast (\Matrix{W}_{gate} \Matrix{x})} \Matrix{W}_{up}.
\end{equation*}
The argument then repeats the proof of Theorem~\ref{theorem:cond-ffn-gelu}.
\end{proof}

A comment regarding the ReLU function and the limit of $\tau_\epsilon \to \infty$, similar to the one related to Theorem~\ref{theorem:cond-ffn-gelu}, applies to Theorem~\ref{theorem:cond-ffn-swiglu}.

Most practical implementations of \eqref{eq:ffn-swiglu} use the SiLU gating function $\sigma(x) = x / (1 + \exp(-x))$. 
This results in the SwiGLU mechanism \citep{shazeer2020glu}, and we analyse its forward error.

\begin{theorem}
\label{theorem:rounding-ffn-swiglu}
For every $\Matrix{x} \in \Real^d$ and the SiLU gating function, the value of $\Layer{FG}(\Matrix{x})$ computed in FP arithmetic satisfies
\begin{equation*}
    |\fl{\Layer{FG}(\Matrix{x})} - \Layer{FG}(\Matrix{x})| \leq \Big( 4 \uh_{w} + \tfrac{13}{2} \uh_{q} + \tfrac{9}{4} \gamma_{MM}(d) + \gamma_{MM}(D) + \Oh{\uh_{q}^2} \Big) |\Matrix{W}_{down}| \Big( |\Matrix{W}_{gate}| |\Matrix{x}| \hap |\Matrix{W}_{up}| |\Matrix{x}| \Big).
\end{equation*}
\end{theorem}
\begin{proof}
By Model~\ref{model:fp_gemm}, we have for $\theta \in \{ up, gate \}$:
\begin{equation*}
    |\fl{\Matrix{W}_{\theta} \Matrix{x}} - \Matrix{W}_{\theta} \Matrix{x}| \leq \Big( 2 \uh_{q} + \gamma_{MM}(d) + \Oh{\uh_{q}^2} \Big) |\Matrix{W}_{\theta}| |\Matrix{x}|.
\end{equation*}
The derivative of the SiLU function is uniformly bounded by $\Norm{\sigma'}{C(\Real)} < 5/4$, and hence
\begin{equation*}
    |\fl{\sigma^\ast(\Matrix{W}_{gate} \Matrix{x})} - \sigma^\ast(\Matrix{W}_{gate} \Matrix{x})| \leq 3 \uh_{w} |\sigma^\ast(\Matrix{W}_{gate} \Matrix{x})| + \tfrac{5}{4} \Big( 2 \uh_{q} + \gamma_{MM}(d) \Big) |\Matrix{W}_{gate}| |\Matrix{x}| + \Oh{\uh_{q}^2}.
\end{equation*}
Let $\Matrix{y} = \sigma^\ast(\Matrix{W}_{gate} \Matrix{x}) \hap \Matrix{W}_{up} \Matrix{x}$ and denote by $\hat{\Matrix{y}}$ its computed value; they satisfy
\begin{align*}
    |\hat{\Matrix{y}} - \Matrix{y}| \leq \uh_{w} |\Matrix{y}| &+ \Big( 3 \uh_{w} + 2\uh_{q} + \gamma_{MM}(d) \Big) |\sigma^\ast(\Matrix{W}_{gate} \Matrix{x})| \hap |\Matrix{W}_{up}| |\Matrix{x}| \\
    &+ \tfrac{5}{4} \Big( 2 \uh_{q} + \gamma_{MM}(d) \Big) |\Matrix{W}_{gate}| |\Matrix{x}| \hap |\Matrix{W}_{up} \Matrix{x}| + \Oh{\uh_{q}^2}.
\end{align*}
Since the absolute value of SiLU is upper bounded by the absolute value of its argument,
\begin{equation*}
    |\hat{\Matrix{y}} - \Matrix{y}| \leq \Big( 4 \uh_{w} + \tfrac{9}{2} \uh_{q} + \tfrac{9}{4} \gamma_{MM}(d) \Big) |\Matrix{W}_{gate}| |\Matrix{x}| \hap |\Matrix{W}_{up}| |\Matrix{x}| + \Oh{\uh_{q}^2}.
\end{equation*}
Finally, we use Model~\ref{model:fp_gemm} again.
\end{proof}

As with the GELU mechanism, the FP computation of the SwiGLU mechanism is forward stable with forward error dominated by three GEMMs and their quantisations errors.
\section{Analysis of softmax}
\label{sec:softmax}

\subsection{Unshifted evaluation}
The key non-linearity of the self-attention mechanism is the softmax function defined in \eqref{eq:softmax} as
\begin{equation*}
    \Layer{S}(\Matrix{x}) = \frac{1}{\sum_{i = 1}^n \exp(x_i)} \begin{bmatrix}
        \exp(x_1) & \cdots & \exp(x_n)
    \end{bmatrix}^\trans, \quad \Matrix{x} \in \Real^n.
\end{equation*}
Below, we write $\Matrix{s} = \Layer{S}(\Matrix{x})$ and denote by $s_{[i]}$ the $i$th largest entry of $\Matrix{s}$, e.g, $s_{[1]} = \Norm{\Matrix{s}}{\infty}$ and $s_{[n]} = \Norm{\Matrix{s}}{-\infty}$.

\begin{theorem}
\label{theorem:cond-softmax}
Let $\Matrix{x} \in \Real^n$.
Normwise condition numbers of $\Layer{S}$ satisfy
\begin{gather*}
    \cond{2,2}{\Layer{S}, \Matrix{x}} \leq s_{[1]} (1 - s_{[1]} + s_{[2]}) \frac{\Norm{\Matrix{x}}{2}}{\Norm{\Matrix{s}}{2}}, \\
    \cond{1,1}{\Layer{S}, \Matrix{x}} = 2 \max_{1 \leq i \leq n} s_i (1 - s_i) \Norm{\Matrix{x}}{1}, \quad \cond{\infty,\infty}{\Layer{S}, \Matrix{x}} = 2 \max_{1 \leq i \leq n} s_i (1 - s_i) \frac{\Norm{\Matrix{x}}{\infty}}{\Norm{\Matrix{s}}{\infty}}, \\
    \cond{1,\infty}{\Layer{S}, \Matrix{x}} = \max_{1 \leq i \leq n} s_i (1 - s_i) \frac{\Norm{\Matrix{x}}{1}}{\Norm{\Matrix{s}}{\infty}}, \quad \cond{\infty,1}{\Layer{S}, \Matrix{x}} = 4 \max_{\Omega \subseteq [1,n]} \bigg( \sum_{i \in \Omega} s_i \bigg) \bigg( \sum_{j \not\in \Omega} s_j \bigg) \Norm{\Matrix{x}}{\infty},
\end{gather*}
its mixed condition numbers equal
\begin{gather*}
    \cond{c,\infty}{\Layer{S}, \Matrix{x}} = \frac{1}{\Norm{\Matrix{s}}{\infty}} \max_{1 \leq i \leq n} s_i \big( \Matrix{s}^\trans |\Matrix{x}| + |x_i| (1 - 2 s_i) \big), \quad \cond{c,1}{\Layer{S}, \Matrix{x}} = 2 \sum_{i = 1}^n s_i (1 - s_i) |x_i|, \\
    \cond{c,2}{\Layer{S}, \Matrix{x}} = \frac{1}{\Norm{\Matrix{s}}{2}} \sqrt{\sum_{i = 1}^n s_i^2 \big( \Matrix{s}^\trans |\Matrix{x}| + |x_i| (1 - 2 s_i) \big)^2}, \quad \cond{2,c}{\Layer{S}, \Matrix{x}} = \sqrt{1 - 2 \Norm{\Matrix{s}}{-\infty} + \Norm{\Matrix{s}}{2}^2} \Norm{\Matrix{x}}{2}, \\
    \cond{\infty, c}{\Layer{S}, \Matrix{x}} = 2 (1 - \Norm{\Matrix{s}}{-\infty}) \Norm{\Matrix{x}}{\infty}, \quad \cond{1, c}{\Layer{S}, \Matrix{x}} = (1 - \Norm{\Matrix{s}}{-\infty}) \Norm{\Matrix{x}}{1},
\end{gather*}
and its componentwise condition number equals
\begin{equation*}
    \cond{c,c}{\Layer{S}, \Matrix{x}} = \Matrix{s}^\trans |\Matrix{x}| + \max_{1 \leq i \leq n} |x_i| (1 - 2 s_i). 
\end{equation*}
\end{theorem}
\begin{proof}
The Jacobian is $\Matrix{J}_{\Layer{S}}(\Matrix{x}) = \diag{\Matrix{s}} - \Matrix{s} \Matrix{s}^\trans$, and we express the condition numbers via Proposition~\ref{proposition:condition_numbers}.
The $\ell_2$-normwise bound follows from the bound on $\Norm{\Matrix{J}_{\Layer{S}}(\Matrix{x})}{2,2}$ proved in \cite{yudin2025pay}.
To derive $\cond{\infty,1}{\Layer{S}, \Matrix{x}}$, we denote $\Omega_{\Matrix{v}} = \{ i : v_i \geq 0 \}$.
Then
\begin{equation*}
    \Norm{\Matrix{J}_{\Layer{S}}(\Matrix{x})}{\infty,1} = \max_{\Matrix{v} \in \{-1,1\}^{n}} \sum_{i} s_i \Big| (1-s_i)v_i - \sum_{j \neq i} s_j v_j \Big| = \max_{\Matrix{v}} \bigg( 2 \sum_{i \in \Omega_{\Matrix{v}}} s_i \sum_{j \not\in \Omega_{\Matrix{v}}} s_j + 2 \sum_{i \not\in \Omega_{\Matrix{v}}} s_i \sum_{j \in \Omega_{\Matrix{v}}} s_j \bigg).
\end{equation*}
To obtain the remaining normwise condition numbers, we analyse the norms of the Jacobian:
\begin{gather*}
    \Norm{\Matrix{J}_{\Layer{S}}(\Matrix{x})}{1,1} = \Norm{\Matrix{J}_{\Layer{S}}(\Matrix{x})}{\infty,\infty} = \max_i s_i \Big( 1 - s_i + \sum_{j \neq i} s_j \Big) = 2 \max_i s_i (1 - s_i), \\
    \Norm{\Matrix{J}_{\Layer{S}}(\Matrix{x})}{1,\infty} = \max_{i,j} |\delta_{i,j} s_i - s_i s_j| = \max_i s_i \max\{ 1 - s_i, \max_{j \neq i} s_j \} = \max_i s_i (1 - s_i),
\end{gather*}
where we rely on $1 - s_i = \sum_{j \neq i} s_j$ in both equalities.
For the mixed condition number, we compute
\begin{equation*}
    \Norm{\Matrix{J}_{\Layer{S}}(\Matrix{x}) \diag{\Matrix{x}}}{\infty,\infty} = \max_i s_i \sum_{j} |\delta_{i,j} - s_j| |x_j| = \max_i s_i \Big( (1 - s_i) |x_i| + \sum_{j \neq i} s_j |x_j| \Big),
\end{equation*}
and the componentwise condition number is obtained similarly.
\end{proof}

Since the components of $\Layer{S}(\Matrix{x})$ are uniformly bounded, softmax is generally ill-conditioned when the norm of $\Matrix{x}$ is large: small relative perturbations of the input lead to substantial changes in the resulting probability distribution.
As a specific illustration, consider $\Matrix{x} = [x~x]^\trans$ and let $\hat{\Matrix{x}} = [(1 + \delta)x,~(1-\delta)x]^\trans$ be its perturbation with relative componentwise distance of $\delta > 0$.
Then
\begin{equation*}
    \Layer{S}(\Matrix{x}) = \begin{bmatrix}
        0.5 & 0.5
    \end{bmatrix}^\trans, \quad \lim_{x \to +\infty} \Layer{S}(\hat{\Matrix{x}}) = \begin{bmatrix}
        1 & 0
    \end{bmatrix}^\trans.
\end{equation*}

Note that $\max_i s_i (1 - s_i)$ and $\max_{\Omega} (\sum_{i \in \Omega} s_i) (\sum_{j \not\in \Omega} s_j)$ appearing in the normwise condition numbers are maximised when both factors are as close to $1/2$ as possible.
That is, when one $s_i$ carries about half of the probability or when the probability mass can be split into two almost equal parts, respectively.

The rounding error analysis of the softmax function was carried out in \cite{blanchard2021accurately}.
Its FP evaluation was shown to be unconditionally forward stable with a forward error,
\begin{equation*}
    |\fl{\Layer{S}(\Matrix{x})} - \Layer{S}(\Matrix{x})| \leq \big( (n+3) \uh_{w} + \Oh{\uh_{w}^2} \big) \Layer{S}(\Matrix{x}).
\end{equation*}
This bound was obtained for recursive summation, whereas on modern GPUs the normalisation constant is computed via pairwise or block summation.
To account for this discrepancy, we reuse our notation:
\begin{equation}
\label{eq:error-softmax}
    |\fl{\Layer{S}(\Matrix{x})} - \Layer{S}(\Matrix{x})| \leq \big( 3\uh_{w} + \gamma_{MM}(n,\uh_w)\big) \Layer{S}(\Matrix{x}).
\end{equation}
The proof of this bound is analogous to \cite{blanchard2021accurately}.

\subsection{Shifted evaluation}
The evaluation algorithm whose forward error was just presented computes the normalisation constant in a straightforward manner.
However, the summation of a long list of potentially large exponentials is likely to overflow in FP arithmetic.
Practical algorithms for softmax rely on its shift-invariance property $\Layer{S}(\Matrix{x} + c \One) = \Layer{S}(\Matrix{x})$ and subtract $(\max_i x_i) \One$ from $\Matrix{x}$ to ensure that every exponential in the sum is bounded by one.
Such shifted evaluation is also forward stable, though its forward error,
\begin{equation*}
    |\fl{\Layer{S}(\Matrix{x})} - \Layer{S}(\Matrix{x})| \leq \Big( \big(n+2 + 2 \max_{1 \leq i \leq n} x_i - 2 \min_{1 \leq j \leq n} x_j\big) \uh_{w} + \Oh{\uh_{w}^2} \Big) \Layer{S}(\Matrix{x}),
\end{equation*}
now depends on the range of components of $\Matrix{x}$ \citep{blanchard2021accurately}.

The shifted evaluation algorithm also necessitates a reconsideration of the sensitivity analysis.
Let $\Matrix{x}, \hat{\Matrix{x}} \in \Real^n$ attain their maxima at the last component, i.e., $x_n = \max_i x_i$ and $\hat{x}_n = \max_i \hat{x}_i$.
Then softmax is evaluated according to $\Matrix{s} = \Layer{S}(\Matrix{x} - x_n \One)$ and $\hat{\Matrix{s}} = \Layer{S}(\hat{\Matrix{x}} - \hat{x}_n \One)$.
Both shifted arguments vanish exactly at the last component, so we consider new inputs $\Matrix{y}, \hat{\Matrix{y}} \in \Real^{n-1}$ given by
\begin{equation*}
    \Matrix{y} = \begin{bmatrix}
        x_1 - x_n & \cdots & x_{n-1} - x_n
    \end{bmatrix}^\trans, \quad \hat{\Matrix{y}} = \begin{bmatrix}
        \hat{x}_1 - \hat{x}_n & \cdots & \hat{x}_{n-1} - \hat{x}_n
    \end{bmatrix}^\trans.
\end{equation*}
We then introduce an auxiliary `reduced' softmax function
\begin{equation*}
    \Layer{Z}(\Matrix{y}) = \frac{1}{1 + \sum_{i = 1}^{n-1} \exp(y_i)} \begin{bmatrix}
        \exp(y_1) & \cdots & \exp(y_{n-1})
    \end{bmatrix}^\trans
\end{equation*}
and use $\Matrix{z} = \Layer{Z}(\Matrix{y})$ to express the original softmax as
\begin{equation*}
    \Matrix{s} = \begin{bmatrix}
       \Matrix{z}^\trans & 1 - \sum_{i = 1}^{n-1} z_i
    \end{bmatrix}^\trans.
\end{equation*}
Let us estimate the perturbation in $\Matrix{s}$ induced by the perturbation in $\Matrix{y}$.
Focusing on the $\ell_1$ norm, which is twice the total-variation distance and measures the probability mass shift, the triangle inequality gives
\begin{equation*}
    \Norm{\hat{\Matrix{s}} - \Matrix{s}}{1} \leq 2 \Norm{\hat{\Matrix{z}} - \Matrix{z}}{1}.
\end{equation*}
To bound the right-hand side, we rely on the \emph{absolute} condition number $\condabs{\infty,1}{\Layer{Z}, \Matrix{y}} = \Norm{\Matrix{J}_{\Layer{Z}}(\Matrix{y})}{\infty,1}$, which measures how absolute output perturbations depend on absolute input perturbations \citep{rice1966theory}.
An analogue of Proposition~\ref{proposition:sensitivity_bound} immediately yields
\begin{equation*}
    \Norm{\hat{\Matrix{s}} - \Matrix{s}}{1} \leq 2\condabs{\infty,1}{\Layer{Z}, \Matrix{y}} \Norm{\hat{\Matrix{y}} - \Matrix{y}}{\infty} + 2C \Norm{\hat{\Matrix{y}} - \Matrix{y}}{\infty}^2 \leq 4\condabs{\infty,1}{\Layer{Z}, \Matrix{y}} \Norm{\hat{\Matrix{x}} - \Matrix{x}}{\infty} + 8C \Norm{\hat{\Matrix{x}} - \Matrix{x}}{\infty}^2.
\end{equation*}

\begin{proposition}
\label{proposition:condabs-softmax-reduced}
Let $\Matrix{y} \in \Real^{n-1}$. The absolute normwise condition number of $\Layer{Z}$ equals
\begin{equation*}
    \condabs{\infty,1}{\Layer{Z}, \Matrix{y}} = \bigg( \sum_{i = 1}^{n-1} z_i \bigg) \bigg( 1 - \sum_{i = 1}^{n-1} z_i \bigg) + 4 \max_{\Omega \subseteq [1,n-1]} \bigg( \sum_{i \in \Omega} z_i \bigg) \bigg( \sum_{j \not\in \Omega} z_j \bigg).
\end{equation*}
\end{proposition}
\begin{proof}
Similarly to $\Layer{S}$, the Jacobian is given by $\Matrix{J}_{\Layer{Z}}(\Matrix{y}) = \diag{\Matrix{z}} - \Matrix{z} \Matrix{z}^\trans$.
As in the proof of Theorem~\ref{theorem:cond-softmax},
\begin{align*}
    \Norm{\Matrix{J}_{\Layer{Z}}(\Matrix{y})}{\infty,1} &= \max_{\Matrix{v} \in \{ -1,1 \}^{n-1}} \bigg( \sum_{i \in \Omega_{\Matrix{v}}} z_i \Big( 1 + \sum_{k \not\in \Omega_{\Matrix{v}}} z_k - \sum_{j \in \Omega_{\Matrix{v}}} z_j \Big) + \sum_{i \not\in \Omega_{\Matrix{v}}} z_i \Big( 1 - \sum_{k \not\in \Omega_{\Matrix{v}}} z_k + \sum_{j \in \Omega_{\Matrix{v}}} z_j \Big) \bigg) \\
    &= \max_{\Matrix{v}} \bigg( \sum_{i = 1}^{n-1} z_i - \Big(\sum_{k \not\in \Omega_{\Matrix{v}}} z_k - \sum_{j \in \Omega_{\Matrix{v}}} z_j\Big)^2 \bigg) \\
    &= \max_{\Matrix{v}} \bigg( \sum_{i = 1}^{n-1} z_i + 4 \Big( \sum_{k \not\in \Omega_{\Matrix{v}}} z_k \Big) \Big( \sum_{j \in \Omega_{\Matrix{v}}} z_j \Big) - \Big( \sum_{i = 1}^{n-1} z_i \Big)^2 \bigg),
\end{align*}
and the only difference is that now $\sum_{i = 1}^{n-1} z_i \neq 1$.
\end{proof}

To compare the sensitivity of softmax when computed with a shifted and unshifted algorithm,\footnote{\cite{blanchard2021accurately} mention that the shift does not alter the condition numbers of softmax $\Layer{S}$. While true for its \emph{absolute} condition numbers, it was overlooked that the absolute condition number of a different function $\Layer{Z}$ determines the sensitivity of shifted softmax evaluation to input perturbations.} we consider the ratio of absolute condition numbers times the constant factor from the error bound:
\begin{equation*}
    \zeta(\Matrix{x}) = \frac{4\condabs{\infty,1}{\Layer{Z}, \Matrix{y}}}{\condabs{\infty,1}{\Layer{S}, \Matrix{x}}}.
\end{equation*}
Note that the numerator can be bounded via
\begin{equation*}
    \condabs{\infty,1}{\Layer{Z}, \Matrix{y}} \leq (1 - s_n) s_n + 4 \left( \frac{1 - s_n}{2} \right)^2 = 1 - s_n.
\end{equation*}
The denominator satisfies a bound $\condabs{\infty,1}{\Layer{S}, \Matrix{x}} \geq 4 s_n (1 - s_n)$ corresponding to $\Omega = \{ n \}$.
At the same time, for every $0 \leq c \leq 1$, there exists $\Omega$ such that $| \sum_{i \in \Omega} s_i - c | \leq s_n / 2$.
Choosing $c = 1/2$, we obtain
\begin{equation*}
    \condabs{\infty,1}{\Layer{S}, \Matrix{x}} \geq 4 \Big( \frac{1 - s_n}{2} \Big) \Big( \frac{1 + s_n}{2} \Big) = 1 - s_n^2.
\end{equation*}
Combining the three bounds together, we get a monotonically decaying envelope of the ratio
\begin{equation*}
    \zeta(\Matrix{x}) \leq \begin{cases}
        \frac{4}{1 + s_n}, & 0 < s_n \leq 1/3, \\
        \frac{1}{s_n}, & 1/3 < s_n \leq 1.
    \end{cases}
\end{equation*}
When the probability distribution $\Matrix{s}$ is highly concentrated with $s_n \approx 1$, shifted evaluation preserves the `basic' sensitivity of softmax, which actively dampens input perturbations:
\begin{equation*}
    \condabs{\infty,1}{\Layer{S}, \Matrix{x}} \approx 4\condabs{\infty,1}{\Layer{Z}, \Matrix{y}} \leq 4(1 - s_n).
\end{equation*}
In the other extreme case of flat distributions, the shift increases the sensitivity by at most a factor of 4, and this scaling factor is achieved exactly at $\lim_{n \to \infty} \zeta(\One) = 4$.
Therefore, our analysis confirms that the shifted evaluation of softmax introduces no ill-conditioning.

\subsection{Concentrated distributions and attention sinks}
The forward-error bounds developed in \cite{blanchard2021accurately} for the unshifted and shifted evaluation of softmax do not impose any assumptions on its input.
Meanwhile, pre-trained LLMs typically generate highly concentrated softmax probability distributions, where almost the entire probability mass resides in a small fraction of the entries and the majority of exponentials fall below the absorption threshold.
We analyse the unshifted algorithm in this case, using recursive summation and assuming that the maximum input entry is the first one; overflows and underflows are not taken into account.

\begin{theorem}
\label{theorem:rounding-softmax-absorption}
Let $\Matrix{x} \in \Real^{n}$ satisfy $x_1 = \max_{1 \leq i \leq n} x_i$, and let
\begin{equation*}
    \Omega = \set[\Big]{1 \leq i \leq n}{\exp(x_i - x_1) < \frac{\uh_w}{2} \frac{1 - \uh_w}{1 + \uh_w}}.
\end{equation*}
The value of $\Matrix{s} = \Layer{S}(\Matrix{x})$ computed in FP arithmetic satisfies
\begin{equation*}
    |\fl{\Matrix{s}} - \Matrix{s}| \leq \Big( (n - |\Omega| + 2) \uh_w + \sum_{i \in \Omega} s_i + \Oh{\uh_w^2} \Big) \Matrix{s}.
\end{equation*}
\end{theorem}
\begin{proof}
Let $\hat{p}_0 = 0$. Define $p_{k} = \hat{p}_{k-1} + \fl{\exp x_{k}}$ and $\hat{p}_{k} = \fl{p_{k}}$.
First of all, note that since $\fl{\exp x_{k}} > 0$ by Model~\ref{model:fp_arithmetic}, we have by the monotonicity properties of rounding that
\begin{equation*}
    \hat{p}_k \geq \cdots \geq \hat{p}_1 = \fl{\exp x_{1}} \geq (1 - \uh_w) \exp(x_{1}) > 0.
\end{equation*}
When $k \not\in \Omega$, Model~\ref{model:fp_arithmetic} yields $|\hat{p}_{k} - p_k| \leq \uh_w (|\hat{p}_{k-1}| + |\fl{\exp x_{k}}|) = \uh_w p_k$.
Suppose that $k \in \Omega$, then
\begin{equation*}
    \fl{\exp x_{k}} \leq (1 + \uh_w) \exp(x_{k}) < \frac{\uh_w}{2} (1 - \uh_w) \exp(x_1) \leq \frac{\uh_w}{2} \fl{\exp x_1} \leq \frac{\uh_w}{2} \hat{p}_{k-1}.
\end{equation*}
This means that $\fl{\exp x_{k}}$ falls below the absorption threshold, and hence $\hat{p}_k = \hat{p}_{k-1}$.
Consequently, the recursive summation of the normalisation constant of softmax satisfies
\begin{equation*}
    \hat{p}_n = \fl[\Big]{\sum_{k = 1}^{n} \exp x_k} = \fl[\Big]{\sum_{k \not\in \Omega} \exp x_k},
\end{equation*}
and thus $|\hat{p}_n - \sum_{k = 1}^{n} \exp(x_k)|$ is bounded by
\begin{align*}
    \Big( (n - |\Omega|) \uh_w + \Oh{\uh_w^2} \Big) \sum_{k \not\in \Omega} \exp(x_k) + \sum_{i \in \Omega} \exp(x_i) \leq \Big( (n - |\Omega|) \uh_w + \sum_{i \in \Omega} s_i + \Oh{\uh_w^2} \Big) \sum_{k = 1}^{n} \exp(x_k).
\end{align*}
It remains to compute $\exp(x_k) / \hat{p}_n$ in FP arithmetic, resulting in the final rounding error bound.
\end{proof}

When all of the exponentials $\exp(x_i - x_1)$ for $i \in \Omega$ are close to $\uh_w / 2$, the rounding-error bound in Theorem~\ref{theorem:rounding-softmax-absorption} reduces to
\begin{equation*}
    |\fl{\Matrix{s}} - \Matrix{s}| \lesssim \Big( (n - |\Omega|) \uh_w + \sum_{i \in \Omega} s_i \Big) \Matrix{s} \leq \Big( (n - |\Omega|) \uh_w + \sum_{i \in \Omega} \exp(x_i - x_1) \Big) \Matrix{s} \lesssim \Big( (n - |\Omega|) \uh_w + \frac{|\Omega|}{2} \uh_w \Big) \Matrix{s},
\end{equation*}
effectively recovering \cite[Theorem~3.4]{blanchard2021accurately}.
Our bound leads to improvement when a stronger inequality $\exp(x_i - x_1) \ll \uh_w$ holds.

A modification of Theorem~\ref{theorem:rounding-softmax-absorption} to the shifted algorithm follows analogously, introducing the gap $x_1 - \min_j x_j$ into the bound as in \cite[Theorem~4.3]{blanchard2021accurately}.
Suppose that $x_i = x_{\min}$ for all $i \in \Omega$ and $\exp(x_{\min} - x_1) = \epsilon \uh_w$.
Then the relative error of the shifted algorithm is of order
\begin{equation*}
    (n - |\Omega| + x_1 - x_{\min}) \uh_w + \sum_{i \in \Omega} s_i \leq \Big(n - |\Omega| + \ln(\uh_w^{-1}) + \ln(\epsilon^{-1}) + |\Omega| \epsilon \Big) \uh_w.
\end{equation*}
In long-context inference of modern LLMs, it is typical to have, for example, $n = 10^6$ and $n - \Omega = 10^2$ with $\epsilon = 10^{-10}$.
When single precision is used, the parentheses approximately equal $140$, guaranteeing the loss of accuracy of only two orders of magnitude for recursive summation.

The assumption $x_1 = \max_{1 \leq i \leq n} x_i$ is not particularly stringent in long-context inference.
It ensures that the running sum becomes sufficiently large early on, allowing absorptions to occur.
If the maximum is attained at index $k$, the error bound would still hold with a restricted absorption index set $\Omega \cap \{ i \geq k \}$.
Crucially, in the context of attention, massive pre-trained LLMs exhibit an \emph{attention-sink} phenomenon \citep{xiao2024efficient}, whereby an attention head systematically routes most of the probability mass to the first few tokens in the sequence.
Thus, our theorem naturally captures the behaviour of attention sinks.

In the absence of an attention sink, the maximum is typically attained at $k \approx n$, because queries generally attend more strongly to recent keys.
In this case, we can reverse the order of summation to achieve the same improvement in the rounding error bound.

Finally, note that Theorem~\ref{theorem:rounding-softmax-absorption} relies on the specific accumulation ordering of recursive summation.
For a different summation algorithm, the absorption dynamics would need to be studied separately.
\section{Analysis of self-attention mechanism}
\label{sec:attention}

\subsection{Condition numbers}
Recall the definition of causal multi-head self-attention with RoPE given in \eqref{eq:sh-attention}-\eqref{eq:mh-attention} as
\begin{gather*}
    \Layer{A}(\Matrix{X}) = \Matrix{W}_O \begin{bmatrix}
        \Matrix{a}_{1,1} & \cdots & \Matrix{a}_{1,N} \\
        \vdots & \ddots & \vdots \\
        \Matrix{a}_{n_{head},1} & \cdots & \Matrix{a}_{n_{head},N}
    \end{bmatrix} \in \Real^{d \times N}, \quad \Matrix{X} \in \Real^{d \times N}, \\
    \Matrix{a}_{h,n} = \begin{bmatrix}
        \Matrix{v}_{h,1} & \cdots & \Matrix{v}_{h,n}
    \end{bmatrix} \Layer{S}(\Matrix{b}_{h,n}) \in \Real^{d_{head}}, \quad \Matrix{b}_{h,n} = \frac{1}{\sqrt{d_{head}}}
    \begin{bmatrix}
        \Matrix{R}_{1}\Matrix{k}_{h,1} & \cdots & \Matrix{R}_{n}\Matrix{k}_{h,n}
    \end{bmatrix}^\trans \Matrix{R}_n \Matrix{q}_{h,n} \in \Real^n,
\end{gather*}
where $\Matrix{R}_t \in \Real^{d_{head} \times d_{head}}$ are orthogonal and $\Matrix{v}_{h,t} \in \Real^{d_{head}}$ are obtained according to
\begin{equation*}
    \begin{bmatrix}
        \Matrix{V}_{1} \\
        \vdots \\
        \Matrix{V}_{n_{head}}
    \end{bmatrix} = \begin{bmatrix}
        \Matrix{v}_{1,1} & \cdots & \Matrix{v}_{1,N} \\
        \vdots & \ddots & \vdots \\
        \Matrix{v}_{n_{head},1} & \cdots & \Matrix{v}_{n_{head},N} \\
    \end{bmatrix} = \begin{bmatrix}
        \Matrix{W}_{V,1} \\
        \vdots \\
        \Matrix{W}_{V,n_{head}}
    \end{bmatrix} \Matrix{X} \in \Real^{d \times N},
\end{equation*}
and likewise for $\Matrix{q}_{h,t}, \Matrix{k}_{h,t} \in \Real^{d_{head}}$.
To analyse the conditioning of $\Layer{A}$, we begin by computing the Jacobian of every individual $\Matrix{a}_{h,n}$.
We have $\Matrix{J}_{\Matrix{a}_{h,n}}(\Matrix{X}) = [\Matrix{\Matrix{J}}_{h,n}~0] \in \Real^{d_{head} \times dN}$ with
\begin{multline*}
    \Matrix{\Matrix{J}}_{h,n} = \Matrix{s}_{h,n}^\trans \krp \Matrix{W}_{V,h} + \tfrac{1}{\sqrt{d_{head}}} \Matrix{V}_{h,[1,n]} \Matrix{J}_{\Layer{S}}(\Matrix{b}_{h,n}) \Big( (\Matrix{I}_n \krp \Matrix{q}_{h,n}^\trans \Matrix{R}_n^\trans) \diag{\Matrix{R}_1, \ldots, \Matrix{R}_n} (\Matrix{I}_n \krp \Matrix{W}_{K,h}) \\
    + \begin{bmatrix}
        \Matrix{R}_{1}\Matrix{k}_{h,1} & \cdots & \Matrix{R}_{n}\Matrix{k}_{h,n}
    \end{bmatrix}^\trans \Matrix{R}_n \Matrix{W}_{Q,h} (\Matrix{e}_n^\trans \krp \Matrix{I}_d) \Big) \in \Real^{d_{head} \times dn},
\end{multline*}
where $\Matrix{s}_{h,n} = \Layer{S}(\Matrix{b}_{h,n})$.
Vectorising the input and output of $\Layer{A}$, we get its block lower triangular Jacobian
\begin{equation*}
    \Matrix{J}_{\Layer{A}}(\Matrix{X}) = (\Matrix{I}_N \krp \Matrix{W}_{O}) 
    \begin{bmatrix}
        \Matrix{J}_{\Matrix{a}_{1,1}}(\Matrix{X})^\trans & \cdots & \Matrix{J}_{\Matrix{a}_{n_{head},1}}(\Matrix{X})^\trans & \Matrix{J}_{\Matrix{a}_{1,2}}(\Matrix{X})^\trans & \cdots & \Matrix{J}_{\Matrix{a}_{n_{head},N}}(\Matrix{X})^\trans
    \end{bmatrix}^\trans \in \Real^{dN \times dN}.
\end{equation*}

We bound absolute normwise condition numbers of causal self-attention for $\ell_1 \to \ell_p$ operator norms applied to input and output matrix perturbations.\footnote{Theorem~\ref{theorem:condabs-attention} below and its proof extend verbatim to non-causal self-attention, grouped-query and multi-query modifications of attention. We assume unshifted evaluation of softmax.}
By Proposition~\ref{proposition:condition_numbers}, this amounts to computing the norm of the Fr\'echet derivative, or equivalently the $(1,p) \to (1,p)$ norm of the Jacobian with
\begin{equation*}
    \Norm{\Vect{\Matrix{X}}}{(1,p)} = \Norm{\Matrix{X}}{1,p} = \max_{1 \leq n \leq N} \Norm{\Matrix{x}_n}{p}.
\end{equation*}

\begin{lemma}
\label{lemma:1p-norm}
Let $1 \leq p, q \leq \infty$.
\begin{enumerate}
    \item If $\Matrix{B} \in \Real^{d_{head} \times d}$ and $\Matrix{U} \in \Real^{m \times N}$ then $\Norm{\Matrix{U} \krp \Matrix{B}}{(1,p),(1,q)} = \Norm{\Matrix{U}}{\infty,\infty} \Norm{\Matrix{B}}{p,q}$.
    \item If $\Matrix{B}_1, \ldots, \Matrix{B}_m \in \Real^{d_{head} \times dN}$ then $\Norm{[\Matrix{B}_1^\trans~\cdots~\Matrix{B}_m^\trans]^\trans}{(1,p),(1,q)} = \max_{1 \leq i \leq m} \Norm{\Matrix{B}_i}{(1,p),q}$.
    \item If $\Matrix{B}_1, \ldots, \Matrix{B}_N \in \Real^{k \times d}$ then $\Norm{\diag{\Matrix{B}_1, \ldots, \Matrix{B}_N}}{(1,p),(1,q)} = \max_{1 \leq n \leq N} \Norm{\Matrix{B}_n}{p,q}$.
\end{enumerate}
\end{lemma}
\begin{proof}
By the definition of operator norms, we maximise $\Norm{(\Matrix{U} \krp \Matrix{B}) \Vect{\Matrix{Y}}}{(1,q)}$ over matrices $\Matrix{Y} \in \Real^{d \times N}$ with $\Norm{\Matrix{Y}}{1,p} = \max_{1 \leq n \leq N} \Norm{\Matrix{y}_n}{p} = 1$.
It holds that
\begin{equation*}
    \Norm{(\Matrix{U} \krp \Matrix{B}) \Vect{\Matrix{Y}}}{(1,q)} = \max_{1 \leq i \leq m} \Norm[\Big]{\sum_{n} u_{i,n} \Matrix{B} \Matrix{y}_n}{q} \leq \max_{i} \sum_{n} |u_{i,n}| \Norm{\Matrix{B} \Matrix{y}_n}{q} \leq \max_{i} \sum_{n} |u_{i,n}| \Norm{\Matrix{B}}{p,q}.
\end{equation*}
Likewise for the second statement, we obtain
\begin{equation*}
    \Norm{[\Matrix{B}_1^\trans~\cdots~\Matrix{B}_m^\trans]^\trans \Vect{\Matrix{Y}}}{(1,q)} = \max_{1 \leq i \leq m} \Norm{\Matrix{B}_i \Vect{\Matrix{Y}}}{q} \leq \max_i \Norm{\Matrix{B}_i}{(1,p),q}.
\end{equation*}
And analogously for the third statement:
\begin{equation*}
    \Norm{\diag{\Matrix{B}_1, \ldots, \Matrix{B}_N} \Vect{\Matrix{Y}}}{(1,q)} = \max_{1 \leq n \leq N} \Norm{\Matrix{B}_n \Matrix{y}_n}{q} \leq \max_{n} \Norm{\Matrix{B}_n}{p,q}.
\end{equation*}
All inequalities can be attained with a properly chosen $\Matrix{Y}$.
\end{proof}

We consider two cases: a general input matrix and a matrix with a \emph{featurewise} massive outlier.
The latter is a typical scenario in massive LLMs, where massive outliers across individual tokens (columns) are localised at the same feature (row).

\begin{theorem}
\label{theorem:condabs-attention}
Let $\Matrix{X} \in \Real^{d \times N}$.
\begin{enumerate}
    \item Let $1 \leq p,q \leq \infty$.
    The absolute normwise condition number of $\Layer{A}$ satisfies
    \begin{equation*}
        \condabs{(1,p),(1,q)}{\Layer{A}, \Matrix{X}} \leq \Norm{\Matrix{W}_O}{(1,q),q} \max_{1 \leq h \leq n_{head}} \tilde{\kappa}_{h,p,q}(\Layer{A}), \quad \tilde{\kappa}_{h,p,q}(\Layer{A}) = \max_{1 \leq n \leq N} \Norm{\Matrix{J}_{h,n}(\Matrix{X})}{(1,p),q}.
    \end{equation*}
    Denote $\tilde{\kappa}_{h}(\Layer{S}) = \max_{1 \leq n \leq N} \Norm{\Matrix{J}_{\Layer{S}}(\Matrix{b}_{h,n})}{\infty,1}$.
    Then
    \begin{equation*}
        \tilde{\kappa}_{h,p,q}(\Layer{A}) \leq \Norm{\Matrix{W}_{V,h}}{p,q} \Big( 1 + \tilde{\kappa}_{h}(\Layer{S}) M_{p} \Norm{\Matrix{X}}{1,p}^2 \Big), \quad M_{p} = 2 d_{head}^{-1/2} \Norm{\Matrix{W}_{Q,h}}{p,2} \Norm{\Matrix{W}_{K,h}}{p,2}.
    \end{equation*}
    \item Denote by $\Matrix{f}_i \in \Real^N$ the $i$th row of $\Matrix{X}$.
    If $\Norm{\Matrix{f}_{i}}{\infty} \leq \vartheta < \Norm{\Matrix{f}_1}{\infty}$ for $i \geq 2$, then
    \begin{gather*}
        \tilde{\kappa}_{h,\infty,\infty}(\Layer{A}) \leq \Norm{\Matrix{W}_{V,h}}{\infty,\infty} + \tilde{\kappa}_{h}(\Layer{S}) \Big( \Norm{\Matrix{w}_{V,h,1}}{\infty} \Norm{\Matrix{X}}{1,\infty} + \Norm{\Matrix{W}_{V,h}}{\infty,\infty} \vartheta \Big) \Big( \mu_\infty \Norm{\Matrix{X}}{1,\infty} + M_\infty \vartheta \Big), \\
        \mu_\infty = d_{head}^{-1/2} (\Norm{\Matrix{W}_{K,h}}{\infty,2} \Norm{\Matrix{w}_{Q,h,1}}{2} + \Norm{\Matrix{W}_{Q,h}}{\infty,2} \Norm{\Matrix{w}_{K,h,1}}{2}).
    \end{gather*}
    \item In the absence of RoPE, the constants can be improved to
    \begin{gather*}
        M_{p} = 2 d_{head}^{-1/2} \Norm{\Matrix{W}_{Q,h}^\trans \Matrix{W}_{K,h}}{p,p^*}, \quad \tfrac{1}{p} + \tfrac{1}{p^*} = 1, \\
        \mu_\infty = d_{head}^{-1/2} (\Norm{\Matrix{W}_{K,h}^\trans \Matrix{w}_{Q,h,1}}{1} + \Norm{\Matrix{W}_{Q,h}^\trans \Matrix{w}_{K,h,1}}{1}).
    \end{gather*}
\end{enumerate}
\end{theorem}
\begin{proof}
By Lemma~\ref{lemma:1p-norm} and the definition of $(1,p)$ norms,
\begin{equation*}
    \Norm{\Matrix{J}_{\Layer{A}}(\Matrix{X})}{(1,p),(1,q)} \leq \Norm{\Matrix{W}_O}{(1,q),q} \max_{1 \leq h \leq n_{head}, 1 \leq n \leq N} \Norm{\Matrix{J}_{\Matrix{a}_{h,n}}(\Matrix{X})}{(1,p),q} = \Norm{\Matrix{W}_O}{(1,q),q} \max_{h,n} \Norm{\Matrix{J}_{h,n}(\Matrix{X})}{(1,p),q}.
\end{equation*}
We bound $\Norm{\Matrix{J}_{h,n}(\Matrix{X})}{(1,p),q}$ using the triangle inequality.
First, we have by Lemma~\ref{lemma:1p-norm} that
\begin{equation*}
    \Norm{\Matrix{s}_{h,n}^\trans \krp \Matrix{W}_{V,h}}{(1,p),q} = \Norm{\Matrix{s}_{h,n}^\trans}{\infty,\infty} \Norm{\Matrix{W}_{V,h}}{p,q} = \Norm{\Matrix{s}_{h,n}}{1} \Norm{\Matrix{W}_{V,h}}{p,q} = \Norm{\Matrix{W}_{V,h}}{p,q}.
\end{equation*}
We refer to Theorem~\ref{theorem:cond-softmax} to bound
\begin{equation*}
    \Norm{\Matrix{V}_{h,[1,n]} \Matrix{J}_{\Layer{S}}(\Matrix{b}_{h,n})}{\infty,q} \leq \Norm{\Matrix{V}_{h,[1,n]}}{1,q} \Norm{\Matrix{J}_{\Layer{S}}(\Matrix{b}_{h,n})}{\infty,1} \leq \tilde{\kappa}_{h}(\Layer{S}) \Norm{\Matrix{W}_{V,h}}{p,q} \Norm{\Matrix{X}}{1,p}.
\end{equation*}
Next, according to Lemma~\ref{lemma:1p-norm} and the structure of RoPE matrices,
\begin{align*}
    \Norm{\diag{\Matrix{q}_{h,n}^\trans \Matrix{R}_1^\trans \Matrix{R}_{n} \Matrix{W}_{K,h}, \ldots, \Matrix{q}_{h,n}^\trans \Matrix{R}_n^\trans \Matrix{R}_{n} \Matrix{W}_{K,h}}}{(1,p),\infty} &= \max_t \Norm{\Matrix{q}_{h,n}^\trans \Matrix{R}_t^\trans \Matrix{R}_{n} \Matrix{W}_{K,h}}{p,\infty} \\
    &\leq \Norm{\Matrix{q}_{h,n}}{2} \Norm{\Matrix{W}_{K,h}}{p,2} \\
    &\leq \Norm{\Matrix{W}_{Q,h}}{p,2} \Norm{\Matrix{x}_n}{p} \Norm{\Matrix{W}_{K,h}}{p,2}.
\end{align*}
Finally, with the help of Lemma~\ref{lemma:1p-norm}, we get
\begin{align*}
    \Norm[\big]{\begin{bmatrix}
        \Matrix{R}_{1}\Matrix{k}_{h,1} & \cdots & \Matrix{R}_{n}\Matrix{k}_{h,n}
    \end{bmatrix}^\trans \Matrix{R}_n \Matrix{W}_{Q,h} (\Matrix{e}_n^\trans \krp \Matrix{I}_d)}{(1,p),\infty} &\leq \Norm{\Matrix{K}_{h,[1,n]}}{1,2} \Norm{\Matrix{W}_{Q,h} (\Matrix{e}_n^\trans \krp \Matrix{I}_d)}{(1,p),2} \\
    &\leq \Norm{\Matrix{W}_{K,h}}{p,2} \Norm{\Matrix{X}}{1,p} \Norm{\Matrix{W}_{Q,h} (\Matrix{e}_n^\trans \krp \Matrix{I}_d)}{(1,p),2} \\
    &= \Norm{\Matrix{W}_{K,h}}{p,2} \Norm{\Matrix{X}}{1,p} \Norm{\Matrix{e}_n^\trans \krp \Matrix{W}_{Q,h}}{(1,p),2} \\
    &= \Norm{\Matrix{W}_{K,h}}{p,2} \Norm{\Matrix{X}}{1,p} \Norm{\Matrix{W}_{Q,h}}{p,2}.
\end{align*}
Combining these partial bounds yields the final result.

To obtain a bound in the specific case of a featurewise massive outlier, it suffices to incorporate the following triangle-inequality bounds in the derivation:
\begin{gather*}
    \Norm{\Matrix{V}_{h,[1,n]}}{1,\infty} \leq \Norm{\Matrix{w}_{V,h,1}}{\infty} \Norm{\Matrix{X}}{1,\infty} + \Norm{\Matrix{W}_{V,h}}{\infty,\infty} \vartheta, \\
    \Norm{\Matrix{q}_{h,n}}{2} \leq \Norm{\Matrix{w}_{Q,h,1}}{2} \Norm{\Matrix{X}}{1,\infty} + \Norm{\Matrix{W}_{Q,h}}{\infty,2} \vartheta, \\
    \Norm{\Matrix{K}_{h,[1,n]}}{1,2} \leq \Norm{\Matrix{w}_{K,h,1}}{2} \Norm{\Matrix{X}}{1,\infty} + \Norm{\Matrix{W}_{K,h}}{\infty,2} \vartheta.
\end{gather*}

Without RoPE, the product $\Matrix{W}_{K,h}^\trans \Matrix{W}_{Q,h}$ need not be broken apart, leading to smaller constants.
\end{proof}

We highlight two properties of the bounds in Theorem~\ref{theorem:condabs-attention}.
First, they are independent of the internal structure of orthogonal RoPE matrices because $\ell_p \to \ell_2$ operator norms were chosen for the key and query weight matrices.

Second, the bounds are independent of the sequence length $N$ because $\Matrix{X} \in \Real^{d \times N}$ enters them with the $\ell_1 \to \ell_p$ norm (i.e., the maximum columnwise $\ell_p$ norm), while the weight matrices are structurally independent of $N$.
The key to achieving sequence-length independence lies in using the $\ell_\infty \to \ell_1$ absolute condition number of softmax; note also that it is uniformly bounded by one.

Previously, a bound on the local Lipschitz constant (i.e., absolute condition number) of mean-field self-attention that is independent of $N$ was derived in \cite{castin2024smooth}.
In the discrete case, their bound corresponds to $p = q = 2$; however, it contains a multiplicative factor $\exp \Oh{\Norm{\Matrix{X}}{1,2}^2}$, whereas our bound is strictly quadratic in the norm of $\Matrix{X}$.

We note that after independently developing Theorem~\ref{theorem:condabs-attention} during the revision of our manuscript, we became aware of concurrent work \cite{emadi2026exact} establishing the special $p = q = 2$ case without RoPE.

Let us compare the general and specialised bounds from Theorem~\ref{theorem:condabs-attention} for $p = q = \infty$ in the `extreme' case of $\vartheta = 0$; the latter can be rewritten as
\begin{equation*}
    \tilde{\kappa}_{h,\infty,\infty}(\Layer{A}) \leq \Norm{\Matrix{W}_{V,h}}{\infty,\infty} \Big( 1 + \lambda \tilde{\kappa}_{h}(\Layer{S}) M_{\infty} \Norm{\Matrix{X}}{1,\infty}^2 \Big)
\end{equation*}
with an improvement factor $\lambda$ bounded by
\begin{equation*}
    \lambda \leq \frac{1}{2} \frac{\Norm{\Matrix{W}_{V,h}}{1,\infty}}{\Norm{\Matrix{W}_{V,h}}{\infty,\infty}} \bigg( \frac{\Norm{\Matrix{W}_{Q,h}}{1,2}}{\Norm{\Matrix{W}_{Q,h}}{\infty,2}} + \frac{\Norm{\Matrix{W}_{K,h}}{1,2}}{\Norm{\Matrix{W}_{K,h}}{\infty,2}} \bigg).
\end{equation*}
The right-hand side ranges from $1 / d^2$ to $1$, quantifying the degree to which the presence of a massive outlier reduces the condition number of an attention head.
Note also that $\Norm{\Matrix{W}_O}{(1,\infty),\infty} = \Norm{\Matrix{W}_O}{\infty,\infty}$, which simplifies the bound on the condition number of multi-head attention.

\subsection{Forward error}
To bound the forward rounding error of self-attention, we need to describe the RoPE matrices $\{ \Matrix{R}_{n} \}_{n \in \Z}$ in more detail.
Let $\tau \in \N$ be large, and let $d_{head}$ be even.
The matrix $\Matrix{R}_n \in \Real^{d_{head} \times d_{head}}$ is defined as
\begin{equation*}
    \Matrix{R}_n = \diag[\Big]{\Matrix{R}_n^{(0)}, \ldots, \Matrix{R}_n^{(d_{head}/2-1)}}, \quad \Matrix{R}_n^{(k)} = \begin{bmatrix}
        \cos(n \omega_k) & -\sin(n \omega_k) \\
        \sin(n \omega_k) &  \cos(n \omega_k)
    \end{bmatrix}, \quad \omega_k = \tau^{-2k/d_{head}}.
\end{equation*}
Thus, every RoPE matrix is a block-diagonal rotation matrix.
In standard implementations, the RoPE matrices are precomputed in high precision for a sufficiently wide range of $n$.
To maintain clarity in the forthcoming analysis, we neglect the rounding errors introduced at the generation of $\Matrix{R}_n$.

\begin{theorem}
\label{theorem:rounding-attn}
Let $\Matrix{X} \in \Real^{d \times N}$ and denote
\begin{equation*}
    \Delta_h = \Big(10 \uh_{q} + 5\uh_{a} + \gamma_{MM}(d_{head}) + 2\gamma_{MM}(d) \Big) 2 d_{head}^{-1/2} \Norm{|\Matrix{W}_{K,h}|}{\infty,2} \Norm{|\Matrix{W}_{Q,h}|}{\infty,2}
\end{equation*}
for every $1 \leq h \leq n_{head}$.
The value of $\Layer{A}(\Matrix{X})$ computed in FP arithmetic satisfies
\begin{multline*}
    \Norm{\fl{\Layer{A}(\Matrix{X})} - \Layer{A}(\Matrix{X})}{1,\infty} \leq \Big( 6 \uh_{q} + 2\gamma_{MM}(d) + \gamma_{MM}(N) + 3 \uh_{w} + \gamma_{MM}(N, \uh_w) \\
    + \max_h \tilde{\kappa}_{h}(\Layer{S}) \Delta_h \Norm{\Matrix{X}}{1,\infty}^2 + \Oh[\big]{\uh_q^2 + \uh_q^2 \Norm{\Matrix{X}}{1,\infty}^2} \Big) \Norm{\Matrix{W}_O}{\infty,\infty} \max_h \Norm{\Matrix{W}_{V,h}}{\infty,\infty} \Norm{\Matrix{X}}{1,\infty},
\end{multline*}
where $\tilde{\kappa}_{h}(\Layer{S})$ is defined in Theorem~\ref{theorem:condabs-attention}.
\end{theorem}
\begin{proof}
According to Model~\ref{model:fp_gemm}, it holds for every $h$ and $n$ that
\begin{equation*}
    |\fl{\Matrix{v}_{h,n}} - \Matrix{v}_{h,n}| \leq \Big( 2 \uh_{q} + \gamma_{MM}(d) + \Oh{\uh_{q}^2} \Big) |\Matrix{W}_{V,h}| |\Matrix{x}_n|,
\end{equation*}
and similarly for $\Matrix{k}_{h,n}$ and $\Matrix{q}_{h,n}$.
The application of RoPE matrices is done block by block and results in
\begin{equation*}
    |\fl{\Matrix{R}_{n} \Matrix{q}_{h,n}} - \Matrix{R}_n \Matrix{q}_{h,n}| \leq \Big( 4 \uh_{q} + 2 \uh_a + \gamma_{MM}(d) + \Oh{\uh_{q}^2} \Big) |\Matrix{R}_{n}| |\Matrix{W}_{Q,h}| |\Matrix{x}_n|.
\end{equation*}
and likewise for $\Matrix{R}_t \Matrix{k}_{h,t}$ for $1 \leq t \leq n$.
The rounding error at the $t$th entry of $\Matrix{b}_{h,n}$ is then bounded by
\begin{equation*}
    \Big(10 \uh_{q} + 5\uh_{a} + \gamma_{MM}(d_{head}) + 2\gamma_{MM}(d) + \Oh{\uh_{q}^2}\Big) \frac{\big( |\Matrix{R}_{t}| |\Matrix{W}_{K,h}| |\Matrix{x}_t| \big)^\trans |\Matrix{R}_{n}| |\Matrix{W}_{Q,h}| |\Matrix{x}_n|}{\sqrt{d_{head}}}.
\end{equation*}
Since $\Norm{||\Matrix{R}_t|}{2,2} \leq \sqrt{2}$, this quantity is in turn bounded by $\Delta_h \Norm{\Matrix{X}}{1,\infty}^2 + \Oh{\uh_q^2 \Norm{\Matrix{X}}{1,\infty}^2}$.
Combining \eqref{eq:error-softmax} with triangle inequality and Proposition~\ref{proposition:sensitivity_bound}, we obtain for $\Matrix{s}_{h,n} = \Layer{S}(\Matrix{b}_{h,n})$ that
\begin{equation*}
    \Norm{\fl{\Matrix{s}_{h,n}} - \Matrix{s}_{h,n}}{1} \leq 3 \uh_{w} + \gamma_{MM}(n,\uh_w) + \Norm{\Matrix{J}_{\Layer{S}}(\Matrix{b}_{h,n})}{\infty,1} \Delta_h \Norm{\Matrix{X}}{1,\infty}^2 + \Oh{\uh_{w}^2 + \uh_q^2 \Norm{\Matrix{X}}{1,\infty}^2}.
\end{equation*}
It follows by triangle inequality that $\Norm{\fl{\Matrix{a}_{h,n}} - \Matrix{a}_{h,n}}{\infty}$ is bounded by
\begin{multline*}
    \Big( 4 \uh_{q} + \gamma_{MM}(n) + \gamma_{MM}(d) + 3 \uh_{w} + \gamma_{MM}(n,\uh_w) \\
    + \tilde{\kappa}_{h}(\Layer{S}) \Delta_h \Norm{\Matrix{X}}{1,\infty}^2 + \Oh[\big]{\uh_q^2 + \uh_q^2 \Norm{\Matrix{X}}{1,\infty}^2} \Big) \Norm{\Matrix{W}_{V,h}}{\infty,\infty} \Norm{\Matrix{X}}{1,\infty},
\end{multline*}
which eventually leads to the final error bound after another application of Model~\ref{model:fp_gemm}.
\end{proof}

In Theorem~\ref{theorem:rounding-attn}, we assumed that softmax is evaluated with the unshifted algorithm.
Furthermore, as stated in Section~\ref{sec:background}, we assumed that every GEMM uses the same quantisation precision, and hence that the aggregated quantisation errors $10 \uh_{q}$ and $6 \uh_{q}$ could be tightened with a more fine-grained analysis.
Nevertheless, the structure of our bound already isolates the corresponding amplification factors.

In the long-context scenario with large $N$, the error is dominated by the FP evaluation of softmax and the subsequent matrix product with the value matrix; both worst-case terms result from summations, and so their scaling with respect to $N$ could be reduced with probabilistic analysis.
Crucially, however, the softmax probability distributions generated by pre-trained LLMs are typically concentrated, so that the vast majority of summands in both computations fall below the absorption threshold.
Therefore, the large $N$ could be effectively replaced by a much smaller number (Theorem~\ref{theorem:rounding-softmax-absorption}), justifying the empirical success of long-context inference.

Note that the bound can be tightened when $\Matrix{X}$ has a featurewise massive outlier (cf. Theorem~\ref{theorem:condabs-attention}).
\section{Analysis of the entire transformer architecture}
\label{sec:transformer}

\subsection{Theoretical analysis}
\label{subsec:transformer-theory}
To obtain a readable bound on the forward rounding error for the inference of a deep transformer $\Layer{T}$, we need to aggregate the output errors due to input perturbations and FP evaluation for all of its constituent parts.
For definiteness, let us focus on the architecture \eqref{eq:transformer-RN} with layer-normalisation function $\Layer{RN}$ \eqref{eq:rms-layernorm} and approximate GELU FF mechanism \eqref{eq:gelu-tanh}.

\begin{proposition}
\label{proposition:rounding-ff-block}
Let $\Matrix{x}, \Matrix{\hat{x}} \in \Real^d$.
\begin{enumerate}
    \item The value of $\Layer{RN}(\Matrix{\hat{x}})$ computed in FP arithmetic satisfies
    \begin{gather*}
        \Norm{\fl{\Layer{RN}(\Matrix{\hat{x}})} - \Layer{RN}(\Matrix{x})}{\infty} \leq \delta_{\Layer{RN}} \Norm{\Layer{RN}(\Matrix{x})}{\infty} + (1 + \delta_{\Layer{RN}}) \condabs{\infty,\infty}{\Layer{RN}, \Matrix{x}} \Norm{\Matrix{\hat{x}} - \Matrix{x}}{\infty} + \Oh{\Norm{\Matrix{\hat{x}} - \Matrix{x}}{\infty}^2},\\
        \delta_{\Layer{RN}} = \tfrac{7}{2} \uh_w + \tfrac{1}{2} \gamma_{MM}(d, \uh_w) + \Oh{\uh_w^2}.
    \end{gather*}
    \item The value of $\Layer{F}(\Matrix{\hat{x}})$ computed in FP arithmetic satisfies
    \begin{gather*}
        \Norm{\fl{\Layer{F}(\Matrix{\hat{x}})} - \Layer{F}(\Matrix{x})}{\infty} \leq \delta_{\Layer{F}} \omega_{u,d} \Norm{\Matrix{x}}{\infty} + (\tfrac{8}{7} + \delta_{\Layer{F}}) \omega_{u,d} \Norm{\Matrix{\hat{x}} - \Matrix{x}}{\infty} + \Oh{\Norm{\Matrix{\hat{x}} - \Matrix{x}}{\infty}^2}, \\
        \delta_{\Layer{F}} = \tfrac{21}{4} \uh_{w} + \tfrac{9}{2} \uh_{q} + \tfrac{5}{4} \gamma_{MM}(d) + \gamma_{MM}(D) + \Oh{\uh_{q}^2}, \quad \omega_{u,d} = \Norm{\Matrix{W}_{down}}{\infty,\infty} \Norm{\Matrix{W}_{up}}{\infty,\infty}.
    \end{gather*}
    \item The value of $\Layer{F}(\Layer{RN}(\Matrix{\hat{x}}))$ computed in FP arithmetic satisfies
    \begin{align*}
        \Norm{\fl{\Layer{F}(\Layer{RN}(\Matrix{\hat{x}}))} - \Layer{F}(\Layer{RN}(\Matrix{x}))}{\infty} &\leq \big( \delta_{\Layer{F}} + \delta_{\Layer{RN}} (\tfrac{8}{7} + \delta_{\Layer{F}}) \big) \omega_{u,d} \Norm{\Layer{RN}(\Matrix{x})}{\infty} \\
        &+ (1 + \delta_{\Layer{RN}}) (\tfrac{8}{7} + \delta_{\Layer{F}}) \condabs{\infty,\infty}{\Layer{RN},\Matrix{x}} \omega_{u,d} \Norm{\Matrix{\hat{x}} - \Matrix{x}}{\infty} + \Oh{\Norm{\Matrix{\hat{x}} - \Matrix{x}}{\infty}^2}.
    \end{align*}
    \item The value of $\Layer{B}_{\Layer{F}}(\Matrix{\hat{x}}) = \Matrix{\hat{x}} + \Layer{F}(\Layer{RN}(\Matrix{\hat{x}}))$ computed in FP arithmetic satisfies
    \begin{multline*}
        \Norm{\fl{\Layer{B}_{\Layer{F}}(\Matrix{\hat{x}})} - \Layer{B}_{\Layer{F}}(\Matrix{x})}{\infty} \leq \uh_r \Norm{\Matrix{x}}{\infty} + \Big[ \uh_r + (1 + \uh_r) \big( \delta_{\Layer{F}} + \delta_{\Layer{RN}} (\tfrac{8}{7} + \delta_{\Layer{F}}) \big) \Big] \omega_{u,d} \Norm{\Layer{RN}(\Matrix{x})}{\infty} \\
        + (1 + \uh_r) \Big[ 1 + (1 + \delta_{\Layer{RN}}) (\tfrac{8}{7} + \delta_{\Layer{F}}) \omega_{u,d} \condabs{\infty,\infty}{\Layer{RN},\Matrix{x}} \Big] \Norm{\Matrix{\hat{x}} - \Matrix{x}}{\infty} + \Oh{\Norm{\Matrix{\hat{x}} - \Matrix{x}}{\infty}^2}.
    \end{multline*}
\end{enumerate}
\end{proposition}
\begin{proof}
By triangle inequality and Theorem~\ref{theorem:rounding-both-layernorm},
\begin{align*}
    |\fl{\Layer{RN}(\Matrix{\hat{x}})} - \Layer{RN}(\Matrix{x})| &\leq |\fl{\Layer{RN}(\Matrix{\hat{x}})} - \Layer{RN}(\Matrix{\hat{x}})| + |\Layer{RN}(\Matrix{\hat{x}}) - \Layer{RN}(\Matrix{x})| \\
    &\leq \delta_{\Layer{RN}} |\Layer{RN}(\Matrix{\hat{x}})| + |\Layer{RN}(\Matrix{\hat{x}}) - \Layer{RN}(\Matrix{x})| \\
    &\leq \delta_{\Layer{RN}} |\Layer{RN}(\Matrix{x})| + (1 + \delta_{\Layer{RN}}) |\Matrix{J}_{\Layer{RN}}(\Matrix{x}) (\Matrix{\hat{x}} - \Matrix{x})| + \Oh{\Norm{\Matrix{\hat{x}} - \Matrix{x}}{\infty}^2},
\end{align*}
and we take the $\ell_\infty$ norm of both sides.
Next, we invoke Theorem~\ref{theorem:cond-ffn-gelu} and \eqref{eq:cond-ffn-gelu-universal} to get
\begin{align*}
    \Norm{\fl{\Layer{F}(\Matrix{\hat{x}})} - \Layer{F}(\Matrix{x})}{\infty} \leq \delta_{\Layer{F}} \omega_{u,d} \Norm{\Matrix{\hat{x}}}{\infty} + \Norm{\sigma'}{C(\Real)} \omega_{u,d} \Norm{\Matrix{\hat{x}} - \Matrix{x}}{\infty} + \Oh{\Norm{\Matrix{\hat{x}} - \Matrix{x}}{\infty}^2},
\end{align*}
where we use the invariance of the $\ell_\infty \to \ell_\infty$ norm to the signs of entries,
and note that $\Norm{\sigma'}{C(\Real)} < 8/7$ for the approximate GELU \eqref{eq:gelu-tanh}.
The third statement is obtained by substituting the first bound into the second one.
For the final bound, consider
\begin{equation*}
    |\fl{\Layer{B}_{\Layer{F}}(\Matrix{\hat{x}})} - \Layer{B}_{\Layer{F}}(\Matrix{x})| \leq \uh_{r} \big(|\Matrix{x}| + |\Layer{F}(\Layer{RN}(\Matrix{x}))|\big) + (1 + \uh_r) |\Matrix{\hat{x}} - \Matrix{x}| + (1 + \uh_r) |\fl{\Layer{F}(\Layer{RN}(\Matrix{\hat{x}}))} - \Layer{F}(\Layer{RN}(\Matrix{x}))|,
\end{equation*}
take the $\ell_\infty$ norm, and substitute the previous bounds.
\end{proof}

\begin{proposition}
\label{proposition:rounding-attn-block}
Let $\Matrix{X}, \Matrix{\hat{X}} \in \Real^{d \times N}$.
\begin{enumerate}
    \item The value of $\Layer{A}(\Matrix{\hat{X}})$ computed in FP arithmetic satisfies
    \begin{align*}
        \Norm{\fl{\Layer{A}(\Matrix{\hat{X}})} - \Layer{A}(\Matrix{X})}{1, \infty} &\leq \delta_{\Layer{OVS}} \omega_{o,v} \Norm{\Matrix{X}}{1,\infty} + \delta_{\Layer{KQ}} \omega_{o,v} \omega_{k,q} \Norm{\Matrix{X}}{1,\infty}^3 \\
        &+ \big( 1 + \delta_{\Layer{OVS}} + (1 + 3 \delta_{\Layer{KQ}}) \omega_{k,q} \Norm{\Matrix{X}}{1,\infty}^2 \big) \omega_{o,v} \Norm{\Matrix{\hat{X}} - \Matrix{X}}{1,\infty} + \Oh{\Norm{\Matrix{\hat{X}} - \Matrix{X}}{1,\infty}^2},
    \end{align*}
    where
    \begin{gather*}
        \delta_{\Layer{OVS}} = 6 \uh_{q} + 2\gamma_{MM}(d) + \gamma_{MM}(N) + 3 \uh_{w} + \gamma_{MM}(N, \uh_w) + \Oh{\uh_q^2}, \\
        \delta_{\Layer{KQ}} = 10 \uh_{q} + 5\uh_{a} + \gamma_{MM}(d_{head}) + 2\gamma_{MM}(d) + \Oh{\uh_q^2}, \\
        \omega_{o,v} = \Norm{\Matrix{W}_O}{\infty,\infty} \max_{1 \leq h \leq n_{head}} \Norm{\Matrix{W}_{V,h}}{\infty,\infty}, \quad \omega_{k,q} = 2 d_{head}^{-1/2} \max_{1 \leq h \leq n_{head}} \Norm{|\Matrix{W}_{K,h}|}{\infty,2} \Norm{|\Matrix{W}_{Q,h}|}{\infty,2}.
    \end{gather*}
    \item The value of $\Layer{A}(\Layer{RN}^*(\Matrix{\hat{X}}))$ computed in FP arithmetic satisfies
    \begin{multline*}
        \Norm{\fl{\Layer{A}(\Layer{RN}^*(\Matrix{\hat{X}}))} - \Layer{A}(\Layer{RN}^*(\Matrix{X}))}{1,\infty} \leq \big( \delta_{\Layer{OVS}} + \delta_{\Layer{RN}} (1 + \delta_{\Layer{OVS}}) \big) \omega_{o,v} \Norm{\Matrix{Y}}{1,\infty} \\
        + \big( \delta_{\Layer{KQ}} + \delta_{\Layer{RN}} (1 + 3\delta_{\Layer{KQ}}) \big) \omega_{o,v} \omega_{k,q} \Norm{\Matrix{Y}}{1,\infty}^3 \\
        + \big( 1 + \delta_{\Layer{OVS}} + (1 + 3 \delta_{\Layer{KQ}}) \omega_{k,q} \Norm{\Matrix{Y}}{1,\infty}^2 \big) (1 + \delta_{\Layer{RN}}) \Tilde{\kappa}(\Layer{RN},\Matrix{X}) \omega_{o,v} \Norm{\Matrix{\hat{X}} - \Matrix{X}}{1,\infty} + \Oh{\Norm{\Matrix{\hat{X}} - \Matrix{X}}{1,\infty}^2}.
    \end{multline*}
    where $\Matrix{Y} = \Layer{RN}^*(\Matrix{X})$ and $\Tilde{\kappa}(\Layer{RN},\Matrix{X}) = \max_{1 \leq n \leq N} \condabs{\infty,\infty}{\Layer{RN},\Matrix{x}_n}$.
    \item The value of $\Layer{B}_{\Layer{A}}(\Matrix{\hat{X}}) = \Matrix{\hat{X}} + \Layer{A}(\Layer{RN}^*(\Matrix{\hat{X}}))$ computed in FP arithmetic satisfies
    \begin{multline*}
        \Norm{\fl{\Layer{B}_{\Layer{A}}(\Matrix{\hat{X}})} - \Layer{B}_{\Layer{A}}(\Matrix{X})}{1,\infty} \leq \uh_r \Norm{\Matrix{X}}{1,\infty} + \Big[ \uh_r + (1 + \uh_r) \big( \delta_{\Layer{OVS}} + \delta_{\Layer{RN}} (1 + \delta_{\Layer{OVS}}) \big) \Big] \omega_{o,v} \Norm{\Matrix{Y}}{1,\infty} \\
        + (1 + \uh_r) \big( \delta_{\Layer{KQ}} + \delta_{\Layer{RN}} (1 + 3\delta_{\Layer{KQ}}) \big) \omega_{o,v} \omega_{k,q} \Norm{\Matrix{Y}}{1,\infty}^3 \\
        + (1 + \uh_r) \Big[1 + \big( 1 + \delta_{\Layer{OVS}} + (1 + 3 \delta_{\Layer{KQ}}) \omega_{k,q} \Norm{\Matrix{Y}}{1,\infty}^2 \big) (1 + \delta_{\Layer{RN}}) \Tilde{\kappa}(\Layer{RN},\Matrix{X}) \omega_{o,v} \Big] \Norm{\Matrix{\hat{X}} - \Matrix{X}}{1,\infty} + \Oh{\Norm{\Matrix{\hat{X}} - \Matrix{X}}{1,\infty}^2}.
    \end{multline*}
\end{enumerate}
\end{proposition}
\begin{proof}
The proof proceeds in completely analogy with Proposition~\ref{proposition:rounding-ff-block}, relying on Theorems~\ref{theorem:condabs-attention} and~\ref{theorem:rounding-attn}.
In addition, we bound $\tilde{\kappa}_h(\Layer{S})$ with one from above by Theorem~\ref{theorem:cond-softmax}.
\end{proof}

Let us assume that the attention and FF residual blocks in \eqref{eq:transformer-RN} are identical for all $1 \leq l \leq L$ and that the same gain and stabilisation parameters are used in the two instances of $\Layer{RN}$.
For every $1 \leq m \leq 2L$, denote by $\Matrix{X}_m \in \Real^{d \times N}$ the exact output of the $m$th block in \eqref{eq:transformer-RN}, so that $\Matrix{X}_{2L} = \Layer{T}(\Matrix{X})$, and let $\Matrix{X}_0 = \Matrix{X}$.
Note that $\max_{0 \leq m \leq 2L - 1} \Norm{\Layer{RN}^*(\Matrix{X}_m)}{1,\infty} \leq \Norm{\Matrix{g}}{\infty}$.
Furthermore, it readily holds that
\begin{equation*}
    \Norm{\Matrix{X}_m}{1,\infty} \leq \Norm{\Matrix{X}_0}{1,\infty} + \Big( \Big\lceil \frac{m}{2} \Big\rceil \omega_{o,v} + \Big\lfloor \frac{m}{2} \Big\rfloor \omega_{u,d} \Big) \Norm{\Matrix{g}}{\infty},
\end{equation*}
and hence the norm of the residual stream grows at most linearly with depth.
Meanwhile, to bound the forward error of transformer inference, we also require a \emph{lower} bound on the residual stream in order to be able to bound $\tilde{\kappa}(\Layer{RN}, \Matrix{X}_m)$ from above.
To this end, we introduce the following polynomial model
\begin{equation}
\label{eq:residual-growth-model}
    \min_{1 \leq n \leq N} \Norm{\Matrix{x}_{m,n}}{2} \geq c m^{\upsilon}, \quad c > 0, \quad 0 \leq \upsilon \leq 1, \quad m \geq 1,
\end{equation}
which ensures that every column of $\Matrix{X}_m$ stays non-zero and perhaps grows in norm with depth.
Assuming that \eqref{eq:residual-growth-model} holds, we refer to the bound \eqref{eq:cond-rn-universal-bound} to obtain
\begin{equation*}
    \tilde{\kappa}(\Layer{RN}, \Matrix{X}_m) = \max_n \condabs{\infty,\infty}{\Layer{RN}, \Matrix{x}_{m,n}} \leq \frac{\sqrt{d} + 1}{2} \Norm{\Matrix{g}}{\infty} \max_n \frac{1}{\sqrt{\Norm{\Matrix{x}_{k,n}}{2}^2 + \epsilon}} \leq \frac{\sqrt{d} + 1}{2c} \Norm{\Matrix{g}}{\infty} m^{-\upsilon}.
\end{equation*}

\begin{theorem}
\label{theorem:rounding-deep}
Let $\Matrix{X}_0 \in \Real^{d \times N}$.
Assume that \eqref{eq:residual-growth-model} holds and denote
\begin{gather*}
    \Delta_{\Layer{A}} = (\uh_r + \delta_{\Layer{OVS}} + \delta_{\Layer{RN}}) \Norm{\Matrix{g}}{\infty} + (\delta_{\Layer{KQ}} + \delta_{\Layer{RN}}) \omega_{k,q} \Norm{\Matrix{g}}{\infty}^3, \quad \Delta_{\Layer{F}} = (\uh_r + \delta_{\Layer{F}} + \tfrac{8}{7} \delta_{\Layer{RN}}) \Norm{\Matrix{g}}{\infty}, \\
    C_{o,v} = \frac{\sqrt{d} + 1}{2} \frac{\Norm{\Matrix{g}}{\infty}}{c} (1 + \omega_{k,q} \Norm{\Matrix{g}}{\infty}^2), \quad C_{u,d} = \frac{8}{7} \frac{\sqrt{d} + 1}{2} \frac{\Norm{\Matrix{g}}{\infty}}{c}.
\end{gather*}
Furthermore, for $1 \leq m \leq 2L$, denote 
\begin{gather*}
    \Lambda_m = 1 + m^{-\upsilon} \begin{cases}
        C_{o,v} \omega_{o,v}, & m~\text{is odd}, \\
        C_{u,d} \omega_{u,d}, & m~\text{is even},
    \end{cases} \\
    \Psi_m = \uh_r \Norm{\Matrix{X}_0}{1,\infty} + \uh_r \Big( \Big\lceil \frac{m-1}{2} \Big\rceil \omega_{o,v} + \Big\lfloor \frac{m-1}{2} \Big\rfloor \omega_{u,d} \Big) \Norm{\Matrix{g}}{\infty} + \begin{cases}
        \Delta_{\Layer{A}} \omega_{o,v}, & m~\text{is odd}, \\
        \Delta_{\Layer{F}} \omega_{u,d}, & m~\text{is even}.
    \end{cases}
\end{gather*}
The value of $\Layer{T}(\Matrix{X}_0)$ computed in FP arithmetic satisfies
\begin{equation*}
    \Norm{\fl{\Layer{T}(\Matrix{X}_0)} - \Layer{T}(\Matrix{X}_0)}{1,\infty} \leq \sum_{m = 1}^{2L} \Big( \prod_{l = m + 1}^{2L} \Lambda_l \Big) \Psi_m + \Oh{\uh_q^2}.
\end{equation*}
\end{theorem}
\begin{proof}
Let $\Matrix{\hat{X}}_m$ be the value of $\Matrix{X}_m$ computed in FP arithmetic.
Propositions~\ref{proposition:rounding-ff-block} and~\ref{proposition:rounding-attn-block} yield
\begin{equation*}
    \Norm{\Matrix{\hat{X}}_m - \Matrix{X}_m}{1, \infty} \leq \Psi_m + \Lambda_m \Norm{\Matrix{\hat{X}}_{m-1} - \Matrix{X}_{m-1}}{1,\infty} + \Oh{\uh_q^2}, \quad \Matrix{\hat{X}}_0 = \Matrix{X}_0.
\end{equation*}
Unrolling the recursion, we get the final bound at $m = 2L$.
\end{proof}

We shall now study this global forward-error bound by explicitly taking into account the dependence of $\Lambda_m$ and $\Psi_m$ on $m$.
Let $\chi = \max\{ C_{o,v} \omega_{o,v}, C_{u,d} \omega_{u,d} \}$ and consider the product:
\begin{equation*}
    \prod_{l = m + 1}^{2L} \Lambda_i \leq \prod_{l = m + 1}^{2L} (1 + \chi l^{-\upsilon}) \leq \exp\Big( \chi \sum_{l = m + 1}^{2L} l^{-\upsilon} \Big) \leq \exp \Big( \chi \int_{m}^{2L} x^{-\upsilon} dx \Big),
\end{equation*}
where we use the monotonicity of $x^{-\upsilon}$ in the last inequality.
Consequently, it holds that
\begin{equation*}
    \prod_{l = m + 1}^{2L} \Lambda_l \leq \begin{cases}
        \exp\Big( \frac{\chi}{1 - \upsilon} \big[ (2L)^{1 - \upsilon} - m^{1 - \upsilon} \big] \Big), & \upsilon \neq 1, \\
        \Big( \frac{2L}{m} \Big)^{\chi}, & \upsilon = 1.
    \end{cases}
\end{equation*}

Let us begin with the $\upsilon = 0$ regime, whereby the residual stream does not necessarily grow.
Then $\prod_{l = m + 1}^{2L} \Lambda_l \leq \exp( \chi (2L - m))$, and with $\Psi_m \leq m \psi + \phi$, Theorem~\ref{theorem:rounding-deep} guarantees for $\chi > 0$ that
\begin{equation*}
    \Norm{\Matrix{\hat{X}}_{2L} - \Matrix{X}_{2L}}{1, \infty} \leq e^{2 \chi L} \sum_{m = 1}^{2L} e^{-m \chi} (m \psi + \phi) \leq e^{2 \chi L} \sum_{m = 1}^{\infty} e^{-m \chi} (m \psi + \phi)
\end{equation*}
grows as $\exp(2 \chi L)$.
To counter the exponential growth of the error, it has become a practical rule of thumb to scale the weights in accordance with depth.
The scaling law $\chi = \hat{\chi} / (2L)$ ensures that the error is bounded and applies to any growth rate of the residual stream, and hence is theoretically universal.
However, such a reduction in weight magnitude impedes training and is avoided in practice in favour of relying on the natural growth of the residual stream.

Let $\upsilon = 0.5$, which is typical for transformers at the early stages of training:
\begin{equation*}
    \Norm{\Matrix{\hat{X}}_{2L} - \Matrix{X}_{2L}}{1, \infty} \leq e^{2 \chi \sqrt{2L}} \sum_{m = 1}^{2L} e^{-2 \sqrt{m} \chi} (m \psi + \phi) \leq e^{2 \chi \sqrt{2L}} \sum_{m = 1}^{\infty} e^{-2 \sqrt{m} \chi} (m \psi + \phi).
\end{equation*}
The growth is now sub-exponential and can be tamed with the requirement $\chi = \hat{\chi} / \sqrt{2L}$.
This scaling law was first introduced for the training of GPT-2 \citep{radford2019language} and has become the standard initialisation technique for modern LLMs (Table~\ref{tab:LLM-sizes}).

At $\upsilon = 1$, the lower and upper bounds on the residual stream agree and $\Matrix{X}_m$ gets consistently pushed in a single direction.
(This behaviour is correlated with the emergence of featurewise massive outliers.)
In this case, the amplification of local errors becomes polynomial rather than (sub-)exponential, yielding
\begin{equation*}
    \Norm{\Matrix{\hat{X}}_{2L} - \Matrix{X}_{2L}}{1, \infty} \leq (2L)^\chi \sum_{m = 1}^{2L} m^{-\chi} (m \psi + \phi).
\end{equation*}
We bound the right-hand side by estimating the sums $\sum_m m^{1 - \chi}$ and $\sum_m m^{-\chi}$ with integrals:\footnote{We use $\sum_{m = 1}^{2L} m^p \leq \int_{1}^{2L+1} x^p dx$ when $p \geq 0$ and $\sum_{m = 1}^{2L} m^p \leq 1 + \int_{1}^{2L} x^p dx$ when $p < 0$.}
\begin{equation*}
    \Norm{\Matrix{\hat{X}}_{2L} - \Matrix{X}_{2L}}{1, \infty} \leq \begin{cases}
        (2L)^\chi (\frac{\chi - 1}{\chi - 2} \psi + \frac{\chi}{\chi - 1} \phi), & \chi > 2, \\
        (2L)^2 \big[(1 + \ln(2L)) \psi + (2 - \frac{1}{2L}) \phi\big], & \chi = 2, \\
        \big[(2L)^\chi + \frac{(2L)^2 - (2L)^\chi}{2 - \chi}\big] \psi + \frac{\chi (2L)^{\chi} - 2L}{\chi - 1} \phi, & 1 < \chi < 2, \\
        (2L)^2 \psi + (2L) (1 + \ln(2L)) \phi, & \chi = 1, \\
        (2L + 1)^2 (\frac{1}{2 - \chi} \psi + \frac{2}{1 - \chi} \phi), & \chi < 1.
    \end{cases}
\end{equation*}
Therefore, the error bound grows polynomially with depth; in particular, quadratically when $\chi < 2$.
The case of $\upsilon = 1$ does not lead to a scaling law as the threshold value of $\chi$ is independent of $L$.
At the same time, the growth rate of our error bound cannot be further improved no matter how small $\chi$ is.

Let us examine the constant $c$ from the residual-stream bound, to which the parameter $\chi$ is inversely proportional.
Consider a uniform scaling of the residual-projection weights $\Matrix{W}_O$ and $\Matrix{W}_{down}$ by a scalar $\xi$.
If the residual stream responds linearly $(c \mapsto \xi c)$, the scaling factor perfectly cancels within $\chi$---and hence the absolute forward error scales by $\xi$ through the $\Psi_m$ term, leaving the relative error unchanged.
Therefore, Theorem~\ref{theorem:rounding-deep} suggests that a necessary condition for the scaling to impact the numerical stability of inference is that it must force a qualitative transition in the dynamic of the residual stream.

\subsection{Computational experiments}
To compare the bound in Theorem~\ref{theorem:rounding-deep} with the empirical behaviour of the rounding error in transformer inference, we select the GPT-2 XL model (see \citep{radford2019language} and Appendix~\ref{appendix:llm_comparison}).
Specifically, we consider two instances of the model: with pre-trained and randomly initialised weights.
For the latter, every weight matrix $\Matrix{W} \in \Real^{d_1 \times d_2}$ is populated with independent and identically distributed random Gaussian entries with zero mean and standard deviation of $1 / \sqrt{d_1}$; additionally, the matrices $\Matrix{W}_O$ and $\Matrix{W}_{down}$ are further scaled by $1 / \sqrt{2L}$, the gain of layer normalisation is set to $\One$, and its bias to zero.
Note that standard GPT-2 inference is performed in FP32.\footnote{The code used for the experiments is available at \url{https://github.com/sbudzinskiy/lowprec-llm-inference}.}

\begin{figure}[t]
\centering
\includegraphics[width=0.8\linewidth]{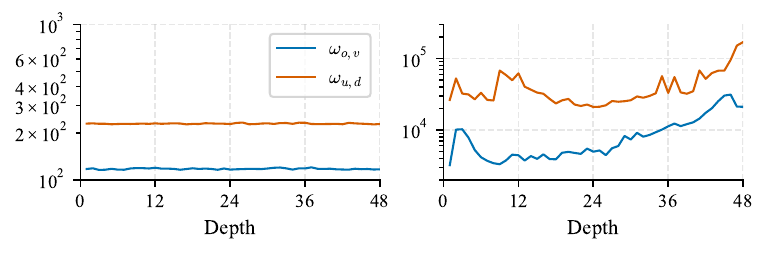}
\caption{Comparison of $\omega_{o,v}$ and $\omega_{u,d}$ from Theorem~\ref{theorem:rounding-attn} for the GPT-2 XL architecture with randomly initialised weights (left) and pre-trained weights (right).}
\label{fig:omega_comparison}
\end{figure}

First, we compute the values of $\omega_{o,v}$ and $\omega_{u,d}$, which now differ across blocks, for the two GPT-2 instances.
The results in Figure~\ref{fig:omega_comparison} show that they are \emph{large}, making our parameter $\chi \gg 1$, and leading to extremely fast amplification of the error in our bounds regardless of the residual-stream growth rate $\upsilon$.

Note that the values of $\omega_{o,v}$ and $\omega_{u,d}$ for the pre-trained model are $1.5$--$2$ orders of magnitude larger than those of the randomly initialised model.
Since pre-trained models tend to operate in the massive-outlier regime, the error bound in Theorem~\ref{theorem:rounding-attn} can be controlled more tightly based on
\begin{equation*}
    \tilde{\omega}_{u,d} = \Norm{\Matrix{W}_{down}}{\infty,\infty} \Norm{\Matrix{W}_{up}}{1,\infty}, \quad \tilde{\omega}_{o,v} = \Norm{\Matrix{W}_O}{\infty,\infty} \max_{1 \leq h \leq n_{head}} \Norm{\Matrix{W}_{V,h}}{1,\infty},
\end{equation*}
by taking the columns of $\Matrix{W}_{up}$ and $\Matrix{W}_{V,h}$ corresponding to the outlier (cf. Theorem~\ref{theorem:condabs-attention}).
Assuming the absolute values of the weights are approximately uniform, we have $\omega_{u,d} \approx d \tilde{\omega}_{u,d}$ and $\omega_{o,v} \approx d_{head} \tilde{\omega}_{o,v}$, bringing the initialised and pre-trained models closer together.
Still, the empirically observed values in Figure~\ref{fig:omega_comparison} render the worst-case upper bound of Theorem~\ref{theorem:rounding-attn} overly pessimistic and practically vacuous.

\begin{figure}[t]
\centering
\includegraphics[width=0.8\linewidth]{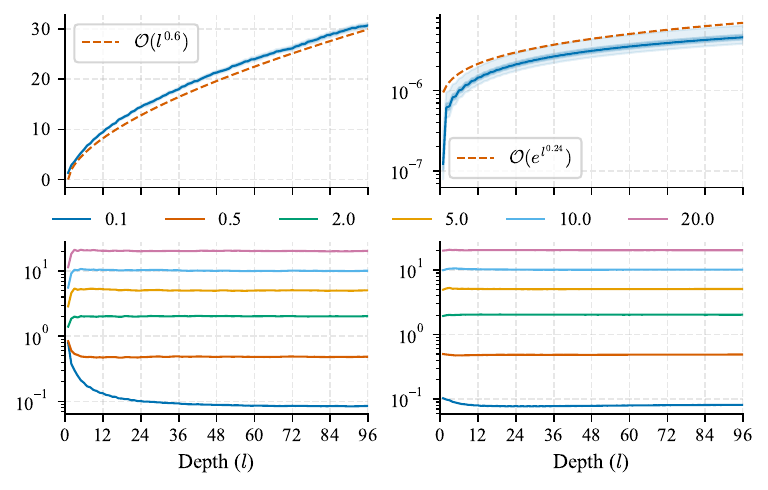}
\caption{Inference dynamics for GPT-2 XL with randomly initialised weights.
Left: magnitude of the residual stream according to \eqref{eq:residual-growth-model}.
Right: absolute $\ell_1 \to \ell_{\infty}$ normwise rounding error.
Top: median, $75$th, and $99$th percentiles over 100 sequences of 1024 tokens.
Bottom: ratio of medians for a model subjected to residual-projection weight scaling relative to the original model.
}
\label{fig:residual_random}
\end{figure}

\begin{figure}[h]
\centering
\includegraphics[width=0.8\linewidth]{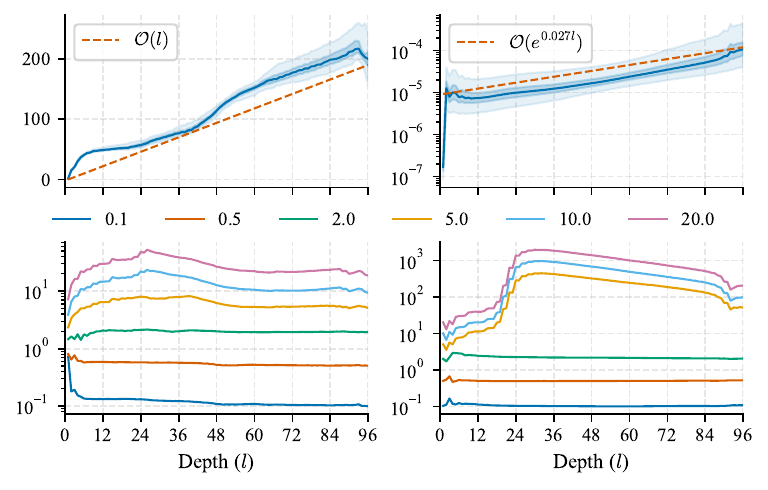}
\caption{Inference dynamics for GPT-2 XL with pre-trained weights.
Left: magnitude of the residual stream according to \eqref{eq:residual-growth-model}.
Right: absolute $\ell_1 \to \ell_{\infty}$ normwise rounding error.
Top: median, $75$th, and $99$th percentiles over 100 sequences of 1024 tokens.
Bottom: ratio of medians for a model subjected to residual-projection weight scaling relative to the original model.
}
\label{fig:residual_trained}
\end{figure}

To evaluate the actual rounding errors of standard GPT-2 inference, we use an FP64 implementation of the model as the ground truth and gather statistics over 100 sequences of 1024 tokens each from the OpenWebText\footnote{\url{https://huggingface.co/datasets/Skylion007/openwebtext}} dataset.
We present the results for the randomly initialised model in Figure~\ref{fig:residual_random} and for the pre-trained model in Figure~\ref{fig:residual_trained}.
In both figures, the top row depicts the growth of the residual stream (left) and of the absolute $\ell_\infty$-norm error (right).
For the initialised model, Theorem~\ref{theorem:rounding-attn} correctly predicts the sub-exponential nature of the error growth, even though the empirical error accumulates at a much milder rate of $\Oh{\exp(l^{0.24})}$ than the theoretical bound of $\Oh{\exp(\chi l^{0.4})}$.
In contrast, the pre-trained model exhibits slow exponential growth of the absolute error; the discrepancy with the polynomial-rate prediction in Theorem~\ref{theorem:rounding-attn} likely stems from the fact that the magnitude of pre-trained weights varies with depth, contrary to the theoretical assumptions of our theorem.

The bottom row in Figures~\ref{fig:residual_random} and~\ref{fig:residual_trained} presents the impact of residual-projection weight scaling, where we multiply $\Matrix{W}_O$ and $\Matrix{W}_{down}$ by a scalar $\xi$ and plot the ratios
\begin{equation*}
    \frac{\min_{1 \leq n \leq N} \Norm{\Matrix{x}_{l,n}^{(\xi)}}{2}}{\min_{1 \leq n \leq N} \Norm{\Matrix{x}_{l,n}^{(1)}}{2}}, \quad \frac{\Norm{\Matrix{\hat{X}}_{l}^{(\xi)} - \Matrix{X}_{l}^{(\xi)}}{1, \infty}}{\Norm{\Matrix{\hat{X}}_{l}^{(1)} - \Matrix{X}_{l}^{(1)}}{1, \infty}}, \quad 1 \leq l \leq 2L,
\end{equation*}
for different values of $\xi$.
In accordance with the structural properties of the error bound in Theorem~\ref{theorem:rounding-attn}, the absolute error scales linearly with $\xi$ for the randomly initialised model.
In contrast, the pre-trained model exhibits a qualitative transition of the residual-stream dynamic between $\xi = 2$ and $\xi = 5$, leading to a non-linear response of the absolute error.
Once the scaling exceeds $\xi = 5$, the model settles into a new regime, and we observe a return to strictly proportional scaling of the absolute error.

\begin{figure}[h]
\centering
\includegraphics[width=0.8\linewidth]{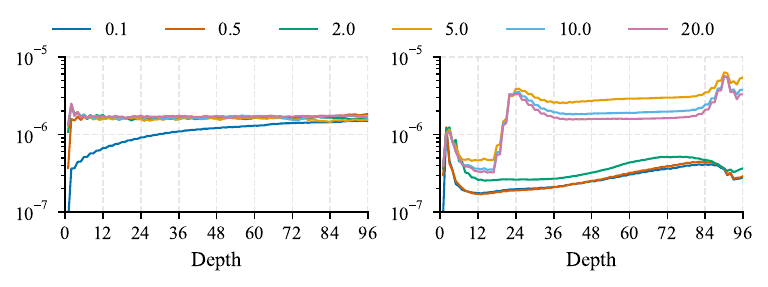}
\caption{Inference dynamics of the relative $\ell_1 \to \ell_{\infty}$ normwise rounding error for GPT-2 XL with randomly initialised weights (left) and pre-trained weights (right).
The median error is computed over 100 sequences of 1024 tokens for models subjected to residual-projection weight scaling.
}
\label{fig:rel_err}
\end{figure}

The results in Figure~\ref{fig:rel_err} validate our theoretical hypothesis formulated at the end of Subsection~\ref{subsec:transformer-theory}: residual-projection weight scaling preserves the dynamics of the relative rounding error of inference, unless the scaling causes the underlying residual stream to bifurcate into a qualitatively distinct regime.

\section*{Acknowledgements}
We are grateful to the referees for motivating us to significantly deepen the analysis.

\section*{Funding}
This work was carried out in the framework of a research project funded by Huawei Technologies Ltd.

\bibliographystyle{abbrvnat}
\bibliography{common/bib}

\begin{thebibliography}{89}
\providecommand{\natexlab}[1]{#1}
\providecommand{\url}[1]{\texttt{#1}}
\expandafter\ifx\csname urlstyle\endcsname\relax
  \providecommand{\doi}[1]{doi: #1}\else
  \providecommand{\doi}{doi: \begingroup \urlstyle{rm}\Url}\fi

\bibitem[Abdelfattah et~al.(2025)Abdelfattah, Dongarra, Fasi, Mikaitis, and Tisseur]{abdelfattah2025analysis}
A.~Abdelfattah, J.~Dongarra, M.~Fasi, M.~Mikaitis, and F.~Tisseur.
\newblock Analysis of floating-point matrix multiplication computed via integer arithmetic.
\newblock \emph{arXiv}, art. 2506.11277, 2025.
\newblock \doi{10.48550/arXiv.2506.11277}.

\bibitem[Ainslie et~al.(2023)Ainslie, Lee-Thorp, de~Jong, Zemlyanskiy, Lebron, and Sanghai]{ainslie2023gqa}
J.~Ainslie, J.~Lee-Thorp, M.~de~Jong, Y.~Zemlyanskiy, F.~Lebron, and S.~Sanghai.
\newblock {GQA}: Training generalized multi-query transformer models from multi-head checkpoints.
\newblock In \emph{EMNLP}, pages 4895--4901, 2023.
\newblock \doi{10.18653/v1/2023.emnlp-main.298}.

\bibitem[Anil et~al.(2019)Anil, Lucas, and Grosse]{anil2019sorting}
C.~Anil, J.~Lucas, and R.~Grosse.
\newblock Sorting out {Lipschitz} function approximation.
\newblock In \emph{ICML}, pages 291--301, 2019.
\newblock \doi{10.48550/arXiv.1811.05381}.

\bibitem[Arai and Ichikawa(2025)]{arai2025quantization}
Y.~Arai and Y.~Ichikawa.
\newblock Quantization error propagation: Revisiting layer-wise post-training quantization.
\newblock In \emph{NeurIPS}, pages 151916--151951, 2025.
\newblock \doi{10.48550/arXiv.2504.09629}.

\bibitem[Ba et~al.(2016)Ba, Kiros, and Hinton]{ba2016layer}
J.~L. Ba, J.~R. Kiros, and G.~E. Hinton.
\newblock Layer normalization.
\newblock \emph{arXiv}, art. 1607.06450, 2016.
\newblock \doi{10.48550/arXiv.1607.06450}.

\bibitem[Bachlechner et~al.(2021)Bachlechner, Majumder, Mao, Cottrell, and McAuley]{bachlechner2021rezero}
T.~Bachlechner, B.~P. Majumder, H.~Mao, G.~Cottrell, and J.~McAuley.
\newblock {ReZero} is all you need: Fast convergence at large depth.
\newblock In \emph{UAI}, pages 1352--1361, 2021.
\newblock \doi{10.48550/arXiv.2003.04887}.

\bibitem[Bartlett et~al.(2017)Bartlett, Foster, and Telgarsky]{bartlett2017spectrally}
P.~L. Bartlett, D.~J. Foster, and M.~J. Telgarsky.
\newblock Spectrally-normalized margin bounds for neural networks.
\newblock In \emph{NIPS}, pages 6240--6249, 2017.
\newblock \doi{10.48550/arXiv.1706.08498}.

\bibitem[Beerens and Higham(2024)]{beerens2024adversarial}
L.~Beerens and D.~J. Higham.
\newblock Adversarial ink: Componentwise backward error attacks on deep learning.
\newblock \emph{IMA J Numer Anal}, 89\penalty0 (1):\penalty0 175--196, 2024.
\newblock \doi{10.1093/imamat/hxad017}.

\bibitem[Beltagy et~al.(2020)Beltagy, Peters, and Cohan]{beltagy2020longformer}
I.~Beltagy, M.~E. Peters, and A.~Cohan.
\newblock Longformer: The long-document transformer.
\newblock \emph{arXiv}, art. 2004.05150, 2020.
\newblock \doi{10.48550/arXiv.2004.05150}.

\bibitem[B{\'e}thune et~al.(2022)B{\'e}thune, Boissin, Serrurier, Mamalet, Friedrich, and Gonzalez~Sanz]{bethune2022pay}
L.~B{\'e}thune, T.~Boissin, M.~Serrurier, F.~Mamalet, C.~Friedrich, and A.~Gonzalez~Sanz.
\newblock Pay attention to your loss: understanding misconceptions about {Lipschitz} neural networks.
\newblock In \emph{NeurIPS}, pages 20077--20091, 2022.
\newblock \doi{10.48550/arXiv.2104.05097}.

\bibitem[Beuzeville et~al.(2021)Beuzeville, Boudier, Buttari, Gratton, Mary, and Pralet]{beuzeville2021adversarial}
T.~Beuzeville, P.~Boudier, A.~Buttari, S.~Gratton, T.~Mary, and S.~Pralet.
\newblock Adversarial attacks via backward error analysis.
\newblock \emph{HAL}, art. hal-03296180, 2021.

\bibitem[Beuzeville et~al.(2026)Beuzeville, Buttari, Gratton, and Mary]{beuzeville2026deterministic}
T.~Beuzeville, A.~Buttari, S.~Gratton, and T.~Mary.
\newblock Deterministic and probabilistic rounding error analysis of neural networks in floating-point arithmetic.
\newblock \emph{IMA J Numer Anal}, art. draf130, 2026.
\newblock \doi{10.1093/imanum/draf130}.

\bibitem[Blanchard et~al.(2020)Blanchard, Higham, Lopez, Mary, and Pranesh]{blanchard2020mixed}
P.~Blanchard, N.~J. Higham, F.~Lopez, T.~Mary, and S.~Pranesh.
\newblock Mixed precision block fused multiply-add: Error analysis and application to {GPU} tensor cores.
\newblock \emph{SIAM J Sci Comput}, 42\penalty0 (3):\penalty0 C124--C141, 2020.
\newblock \doi{10.1137/19M1289546}.

\bibitem[Blanchard et~al.(2021)Blanchard, Higham, and Higham]{blanchard2021accurately}
P.~Blanchard, D.~J. Higham, and N.~J. Higham.
\newblock Accurately computing the log-sum-exp and softmax functions.
\newblock \emph{IMA J Numer Anal}, 41\penalty0 (4):\penalty0 2311--2330, 2021.
\newblock \doi{10.1093/imanum/draa038}.

\bibitem[Budzinskiy et~al.(2026)Budzinskiy, Gloser, Yilmaz, Tham, Lin, Fang, Wu, and Petersen]{budzinskiy2026lamp}
S.~Budzinskiy, M.~Gloser, T.~Yilmaz, Y.~H. Tham, Y.~Lin, W.~Fang, F.~Wu, and P.~Petersen.
\newblock {LAMP}: Look-ahead mixed-precision inference of large language models.
\newblock \emph{arXiv}, art. 2601.21623, 2026.
\newblock \doi{10.48550/arXiv.2601.21623}.

\bibitem[Castin et~al.(2024)Castin, Ablin, and Peyr{\'e}]{castin2024smooth}
V.~Castin, P.~Ablin, and G.~Peyr{\'e}.
\newblock How smooth is attention?
\newblock In \emph{ICML}, pages 5817--5840, 2024.
\newblock \doi{10.48550/arXiv.2312.14820}.

\bibitem[Chowdhery et~al.(2023)Chowdhery, Narang, Devlin, Bosma, Mishra, Roberts, Barham, Chung, Sutton, Gehrmann, et~al.]{chowdhery2023palm}
A.~Chowdhery, S.~Narang, J.~Devlin, M.~Bosma, G.~Mishra, A.~Roberts, P.~Barham, H.~W. Chung, C.~Sutton, S.~Gehrmann, et~al.
\newblock Palm: Scaling language modeling with pathways.
\newblock \emph{J Mach Learn Res}, 24\penalty0 (240):\penalty0 1--113, 2023.
\newblock \doi{10.48550/arXiv.2204.02311}.

\bibitem[Cisse et~al.(2017)Cisse, Bojanowski, Grave, Dauphin, and Usunier]{cisse2017parseval}
M.~Cisse, P.~Bojanowski, E.~Grave, Y.~Dauphin, and N.~Usunier.
\newblock Parseval networks: Improving robustness to adversarial examples.
\newblock In \emph{ICML}, pages 854--863, 2017.
\newblock \doi{10.48550/arXiv.1704.08847}.

\bibitem[Colbert et~al.(2024)Colbert, Grob, Franco, Zhang, and Saab]{colbert2024accumulator}
I.~Colbert, F.~Grob, G.~Franco, J.~Zhang, and R.~Saab.
\newblock Accumulator-aware post-training quantization for large language models.
\newblock \emph{arXiv}, art. 2409.17092, 2024.
\newblock \doi{10.48550/arXiv.2409.17092}.

\bibitem[Croci et~al.(2022)Croci, Fasi, Higham, Mary, and Mikaitis]{croci2022stochastic}
M.~Croci, M.~Fasi, N.~J. Higham, T.~Mary, and M.~Mikaitis.
\newblock Stochastic rounding: implementation, error analysis and applications.
\newblock \emph{R Soc Open Sci}, 9\penalty0 (3):\penalty0 211631, 2022.
\newblock \doi{10.1098/rsos.211631}.

\bibitem[Dai et~al.(2019)Dai, Yang, Yang, Carbonell, Le, and Salakhutdinov]{dai2019transformer}
Z.~Dai, Z.~Yang, Y.~Yang, J.~G. Carbonell, Q.~Le, and R.~Salakhutdinov.
\newblock {Transformer-XL}: Attentive language models beyond a fixed-length context.
\newblock In \emph{ACL}, pages 2978--2988, 2019.
\newblock \doi{10.18653/v1/P19-1285}.

\bibitem[Dettmers et~al.(2022)Dettmers, Lewis, Belkada, and Zettlemoyer]{dettmers2022gpt3}
T.~Dettmers, M.~Lewis, Y.~Belkada, and L.~Zettlemoyer.
\newblock Gpt3.int8(): 8-bit matrix multiplication for transformers at scale.
\newblock In \emph{NeurIPS}, pages 30318--30332, 2022.
\newblock \doi{10.48550/arXiv.2208.07339}.

\bibitem[Devlin et~al.(2019)Devlin, Chang, Lee, and Toutanova]{devlin2019bert}
J.~Devlin, M.-W. Chang, K.~Lee, and K.~Toutanova.
\newblock {BERT}: Pre-training of deep bidirectional transformers for language understanding.
\newblock In \emph{NAACL-HLT}, volume~1, pages 4171--4186, 2019.
\newblock \doi{10.48550/arXiv.1810.04805}.

\bibitem[Dong et~al.(2021)Dong, Cordonnier, and Loukas]{dong2021attention}
Y.~Dong, J.-B. Cordonnier, and A.~Loukas.
\newblock Attention is not all you need: Pure attention loses rank doubly exponentially with depth.
\newblock In \emph{ICML}, pages 2793--2803, 2021.
\newblock \doi{10.48550/arXiv.2103.03404}.

\bibitem[Dosovitskiy(2021)]{dosovitskiy2021image}
A.~Dosovitskiy.
\newblock An image is worth 16x16 words: Transformers for image recognition at scale.
\newblock In \emph{ICLR}, 2021.
\newblock \doi{10.48550/arXiv.2010.11929}.

\bibitem[El~Arar et~al.(2024)El~Arar, Sohier, de~Oliveira~Castro, and Petit]{el2024bounds}
E.-M. El~Arar, D.~Sohier, P.~de~Oliveira~Castro, and E.~Petit.
\newblock Bounds on nonlinear errors for variance computation with stochastic rounding.
\newblock \emph{SIAM J Sci Comput}, 46\penalty0 (5):\penalty0 B579--B599, 2024.
\newblock \doi{10.1137/23M1563001}.

\bibitem[El~Arar et~al.(2025)El~Arar, Filip, Mary, and Riccietti]{el2025mixed}
E.-M. El~Arar, S.-I. Filip, T.~Mary, and E.~Riccietti.
\newblock Mixed precision accumulation for neural network inference guided by componentwise forward error analysis.
\newblock \emph{arXiv}, art. 2503.15568, 2025.
\newblock \doi{10.48550/arXiv.2503.15568}.

\bibitem[Emadi(2026)]{emadi2026exact}
S.~M. Emadi.
\newblock Exact attention sensitivity and the geometry of transformer stability.
\newblock \emph{arXiv}, art. 2602.18849, 2026.
\newblock \doi{10.48550/arXiv.2602.18849}.

\bibitem[Frantar et~al.(2023)Frantar, Ashkboos, Hoefler, and Alistarh]{frantar2023gptq}
E.~Frantar, S.~Ashkboos, T.~Hoefler, and D.~Alistarh.
\newblock {GPTQ}: Accurate post-training quantization for generative pre-trained transformers.
\newblock In \emph{ICLR}, 2023.
\newblock \doi{10.48550/arXiv.2210.17323}.

\bibitem[Frieder et~al.(2023)Frieder, Pinchetti, Griffiths, Salvatori, Lukasiewicz, Petersen, and Berner]{frieder2023mathematical}
S.~Frieder, L.~Pinchetti, R.-R. Griffiths, T.~Salvatori, T.~Lukasiewicz, P.~Petersen, and J.~Berner.
\newblock Mathematical capabilities of {ChatGPT}.
\newblock In \emph{NeurIPS}, pages 27699--27744, 2023.
\newblock \doi{10.48550/arXiv.2301.13867}.

\bibitem[Gholami et~al.(2022)Gholami, Kim, Dong, Yao, Mahoney, and Keutzer]{gholami2022survey}
A.~Gholami, S.~Kim, Z.~Dong, Z.~Yao, M.~W. Mahoney, and K.~Keutzer.
\newblock A survey of quantization methods for efficient neural network inference.
\newblock In \emph{Low-Power Computer Vision}, pages 291--326. Chapman and Hall/CRC, 2022.
\newblock \doi{10.1201/9781003162810-13}.

\bibitem[Gohberg and Koltracht(1993)]{gohberg1993mixed}
I.~Gohberg and I.~Koltracht.
\newblock Mixed, componentwise, and structured condition numbers.
\newblock \emph{SIAM J Matrix Anal Appl}, 14\penalty0 (3):\penalty0 688--704, 1993.
\newblock \doi{10.1137/0614049}.

\bibitem[Gouk et~al.(2021)Gouk, Frank, Pfahringer, and Cree]{gouk2021regularisation}
H.~Gouk, E.~Frank, B.~Pfahringer, and M.~J. Cree.
\newblock Regularisation of neural networks by enforcing {Lipschitz} continuity.
\newblock \emph{Mach Learn}, 110\penalty0 (2):\penalty0 393--416, 2021.
\newblock \doi{10.1007/s10994-020-05929-w}.

\bibitem[Grattafiori et~al.(2024)Grattafiori, Dubey, Jauhri, Pandey, Kadian, Al-Dahle, Letman, Mathur, Schelten, Vaughan, et~al.]{grattafiori2024llama}
A.~Grattafiori, A.~Dubey, A.~Jauhri, A.~Pandey, A.~Kadian, A.~Al-Dahle, A.~Letman, A.~Mathur, A.~Schelten, A.~Vaughan, et~al.
\newblock The {Llama} 3 herd of models.
\newblock \emph{arXiv}, art. 2407.21783, 2024.
\newblock \doi{10.48550/arXiv.2407.21783}.

\bibitem[Hendrycks and Gimpel(2016)]{hendrycks2016gaussian}
D.~Hendrycks and K.~Gimpel.
\newblock Gaussian error linear units {(GELUs)}.
\newblock \emph{arXiv}, art. 1606.08415, 2016.
\newblock \doi{10.48550/arXiv.1606.08415}.

\bibitem[Higham(2002)]{higham2002accuracy}
N.~J. Higham.
\newblock \emph{Accuracy and Stability of Numerical Algorithms}.
\newblock SIAM, 2 edition, 2002.
\newblock \doi{10.1137/1.9780898718027}.

\bibitem[Higham(2008)]{higham2008functions}
N.~J. Higham.
\newblock \emph{Functions of Matrices: Theory and Computation}.
\newblock SIAM, 2008.
\newblock \doi{10.1137/1.9780898717778}.

\bibitem[Horn and Johnson(1994)]{horn1994topics}
R.~A. Horn and C.~R. Johnson.
\newblock \emph{Topics in Matrix Analysis}.
\newblock CUP, 1994.
\newblock \textsc{isbn}: \href{https://www.cambridge.org/us/universitypress/subjects/mathematics/algebra/topics-matrix-analysis}{9780521467131}.

\bibitem[Horn and Johnson(2012)]{horn2012matrix}
R.~A. Horn and C.~R. Johnson.
\newblock \emph{Matrix Analysis}.
\newblock CUP, 2 edition, 2012.
\newblock \textsc{isbn}: \href{https://www.cambridge.org/us/universitypress/subjects/mathematics/algebra/matrix-analysis-2nd-edition}{9780521548236}.

\bibitem[Jacob et~al.(2018)Jacob, Kligys, Chen, Zhu, Tang, Howard, Adam, and Kalenichenko]{jacob2018quantization}
B.~Jacob, S.~Kligys, B.~Chen, M.~Zhu, M.~Tang, A.~Howard, H.~Adam, and D.~Kalenichenko.
\newblock Quantization and training of neural networks for efficient integer-arithmetic-only inference.
\newblock In \emph{IEEE/CVF CVPR}, pages 2704--2713, 2018.
\newblock \doi{10.48550/arXiv.1712.05877}.

\bibitem[Jiang et~al.(2023)Jiang, Sablayrolles, Mensch, Bamford, Chaplot, De~Las~Casas, Bressand, Lengyel, Lample, Saulnier, et~al.]{jiang2023mistral}
A.~Q. Jiang, A.~Sablayrolles, A.~Mensch, C.~Bamford, D.~S. Chaplot, D.~De~Las~Casas, F.~Bressand, G.~Lengyel, G.~Lample, L.~Saulnier, et~al.
\newblock {Mistral 7B}.
\newblock \emph{arXiv}, art. 2310.06825, 2023.
\newblock \doi{10.48550/arXiv.2310.06825}.

\bibitem[Jiang et~al.(2024)Jiang, Sablayrolles, Roux, Mensch, Savary, Bamford, Chaplot, Casas, Hanna, Bressand, et~al.]{jiang2024mixtral}
A.~Q. Jiang, A.~Sablayrolles, A.~Roux, A.~Mensch, B.~Savary, C.~Bamford, D.~S. Chaplot, D.~d.~l. Casas, E.~B. Hanna, F.~Bressand, et~al.
\newblock Mixtral of experts.
\newblock \emph{arXiv}, art. 2401.04088, 2024.
\newblock \doi{10.48550/arXiv.2401.04088}.

\bibitem[Kim et~al.(2021{\natexlab{a}})Kim, Papamakarios, and Mnih]{kim2021lipschitz}
H.~Kim, G.~Papamakarios, and A.~Mnih.
\newblock The {Lipschitz} constant of self-attention.
\newblock In \emph{ICML}, pages 5562--5571, 2021{\natexlab{a}}.
\newblock \doi{10.48550/arXiv.2006.04710}.

\bibitem[Kim et~al.(2021{\natexlab{b}})Kim, Gholami, Yao, Mahoney, and Keutzer]{kim2021bert}
S.~Kim, A.~Gholami, Z.~Yao, M.~W. Mahoney, and K.~Keutzer.
\newblock {I-BERT}: {Integer}-only {BERT} quantization.
\newblock In \emph{ICML}, pages 5506--5518, 2021{\natexlab{b}}.
\newblock \doi{10.48550/arXiv.2101.01321}.

\bibitem[Lancaster and Farahat(1972)]{lancaster1972norms}
P.~Lancaster and H.~K. Farahat.
\newblock Norms on direct sums and tensor products.
\newblock \emph{Math Comput}, 26\penalty0 (118):\penalty0 401--414, 1972.
\newblock \doi{10.1090/S0025-5718-1972-0305099-X}.

\bibitem[Lin et~al.(2016)Lin, Talathi, and Annapureddy]{lin2016fixed}
D.~Lin, S.~Talathi, and S.~Annapureddy.
\newblock Fixed point quantization of deep convolutional networks.
\newblock In \emph{ICML}, pages 2849--2858, 2016.
\newblock \doi{10.48550/arXiv.1511.06393}.

\bibitem[Lin et~al.(2024)Lin, Tang, Tang, Yang, Dang, and Han]{lin2024awq}
J.~Lin, J.~Tang, H.~Tang, S.~Yang, X.~Dang, and S.~Han.
\newblock {AWQ}: Activation-aware weight quantization for on-device {LLM} compression and acceleration.
\newblock In \emph{MLSys}, volume~6, pages 87--100, 2024.
\newblock \doi{10.48550/arXiv.2306.00978}.

\bibitem[Lin et~al.(2023)Lin, Akin, Rao, Hie, Zhu, Lu, Smetanin, Verkuil, Kabeli, Shmueli, et~al.]{lin2023evolutionary}
Z.~Lin, H.~Akin, R.~Rao, B.~Hie, Z.~Zhu, W.~Lu, N.~Smetanin, R.~Verkuil, O.~Kabeli, Y.~Shmueli, et~al.
\newblock Evolutionary-scale prediction of atomic-level protein structure with a language model.
\newblock \emph{Science}, 379\penalty0 (6637):\penalty0 1123--1130, 2023.
\newblock \doi{10.1126/science.ade2574}.

\bibitem[Liu et~al.(2024)Liu, Feng, Xue, Wang, Wu, Lu, Zhao, Deng, Zhang, Ruan, et~al.]{liu2024deepseek}
A.~Liu, B.~Feng, B.~Xue, B.~Wang, B.~Wu, C.~Lu, C.~Zhao, C.~Deng, C.~Zhang, C.~Ruan, et~al.
\newblock Deepseek-v3 {Technical} report.
\newblock \emph{arXiv}, art. 2412.19437, 2024.
\newblock \doi{10.48550/arXiv.2412.19437}.

\bibitem[Liu et~al.(2026)Liu, Khandelwal, Subramanian, Jouault, Rastogi, Sad{\'e}, Jeffares, Jiang, Cahill, Gavaudan, et~al.]{liu2026mistral}
A.~H. Liu, K.~Khandelwal, S.~Subramanian, V.~Jouault, A.~Rastogi, A.~Sad{\'e}, A.~Jeffares, A.~Jiang, A.~Cahill, A.~Gavaudan, et~al.
\newblock Ministral 3.
\newblock \emph{arXiv}, art. 2601.08584, 2026.
\newblock \doi{10.48550/arXiv.2601.08584}.

\bibitem[Magnus(2024)]{magnus2024matrix}
J.~R. Magnus.
\newblock Matrix derivatives: {Why} and where did it go wrong?
\newblock \emph{ILAS IMAGE}, 72:\penalty0 3--8, 2024.
\newblock URL \url{https://ilasic.org/image/}.

\bibitem[Magnus and Neudecker(2019)]{magnus2019matrix}
J.~R. Magnus and H.~Neudecker.
\newblock \emph{Matrix Differential Calculus with Applications in Statistics and Econometrics}.
\newblock Wiley, 3 edition, 2019.
\newblock \doi{10.1002/9781119541219}.

\bibitem[Malartic et~al.(2024)Malartic, Chowdhury, Cojocaru, Farooq, Campesan, Djilali, Narayan, Singh, Velikanov, Boussaha, et~al.]{malartic2024falcon2}
Q.~Malartic, N.~R. Chowdhury, R.~Cojocaru, M.~Farooq, G.~Campesan, Y.~A.~D. Djilali, S.~Narayan, A.~Singh, M.~Velikanov, B.~E.~A. Boussaha, et~al.
\newblock Falcon2-11b technical report.
\newblock \emph{arXiv}, art. 2407.14885, 2024.
\newblock \doi{10.48550/arXiv.2407.14885}.

\bibitem[Malinovskii et~al.(2025)Malinovskii, Panferov, Ilin, Guo, Richt{\'a}rik, and Alistarh]{malinovskii2025higgs}
V.~Malinovskii, A.~Panferov, I.~Ilin, H.~Guo, P.~Richt{\'a}rik, and D.~Alistarh.
\newblock {HIGGS}: Pushing the limits of large language model quantization via the linearity theorem.
\newblock In \emph{NAACL-HLT}, pages 10857--10886, 2025.
\newblock \doi{10.18653/v1/2025.naacl-long.543}.

\bibitem[Markidis et~al.(2018)Markidis, Der~Chien, Laure, Peng, and Vetter]{markidis2018nvidia}
S.~Markidis, S.~W. Der~Chien, E.~Laure, I.~B. Peng, and J.~S. Vetter.
\newblock {NVIDIA} tensor core programmability, performance \& precision.
\newblock In \emph{IPDPSW}, pages 522--531. IEEE, 2018.
\newblock \doi{10.1109/IPDPSW.2018.00091}.

\bibitem[Micikevicius et~al.(2018)Micikevicius, Narang, Alben, Diamos, Elsen, Garcia, Ginsburg, Houston, Kuchaiev, Venkatesh, et~al.]{micikevicius2018mixed}
P.~Micikevicius, S.~Narang, J.~Alben, G.~Diamos, E.~Elsen, D.~Garcia, B.~Ginsburg, M.~Houston, O.~Kuchaiev, G.~Venkatesh, et~al.
\newblock Mixed precision training.
\newblock In \emph{ICLR}, 2018.
\newblock \doi{10.48550/arXiv.1710.03740}.

\bibitem[Muller et~al.(2018)Muller, Brisebarre, De~Dinechin, Jeannerod, Lefevre, Melquiond, Revol, Stehl{\'e}, Torres, et~al.]{muller2018handbook}
J.-M. Muller, N.~Brisebarre, F.~De~Dinechin, C.-P. Jeannerod, V.~Lefevre, G.~Melquiond, N.~Revol, D.~Stehl{\'e}, S.~Torres, et~al.
\newblock \emph{Handbook of Floating-Point Arithmetic}.
\newblock Birkh\"auser, 2 edition, 2018.
\newblock \doi{10.1007/978-3-319-76526-6}.

\bibitem[Nagel et~al.(2020)Nagel, Amjad, Van~Baalen, Louizos, and Blankevoort]{nagel2020up}
M.~Nagel, R.~A. Amjad, M.~Van~Baalen, C.~Louizos, and T.~Blankevoort.
\newblock Up or down? adaptive rounding for post-training quantization.
\newblock In \emph{ICML}, pages 7197--7206, 2020.
\newblock \doi{10.48550/arXiv.2004.10568}.

\bibitem[Nagel et~al.(2021)Nagel, Fournarakis, Amjad, Bondarenko, Van~Baalen, and Blankevoort]{nagel2021white}
M.~Nagel, M.~Fournarakis, R.~A. Amjad, Y.~Bondarenko, M.~Van~Baalen, and T.~Blankevoort.
\newblock A white paper on neural network quantization.
\newblock \emph{arXiv}, art. 2106.08295, 2021.
\newblock \doi{10.48550/arXiv.2106.08295}.

\bibitem[Newhouse et~al.(2025)Newhouse, Hess, Cesista, Zahorodnii, Bernstein, and Isola]{newhouse2025training}
L.~Newhouse, R.~P. Hess, F.~Cesista, A.~Zahorodnii, J.~Bernstein, and P.~Isola.
\newblock Training transformers with enforced {Lipschitz} constants.
\newblock \emph{arXiv}, art. 2507.13338, 2025.
\newblock \doi{10.48550/arXiv.2507.13338}.

\bibitem[Noci et~al.(2022)Noci, Anagnostidis, Biggio, Orvieto, Singh, and Lucchi]{noci2022signal}
L.~Noci, S.~Anagnostidis, L.~Biggio, A.~Orvieto, S.~P. Singh, and A.~Lucchi.
\newblock Signal propagation in transformers: Theoretical perspectives and the role of rank collapse.
\newblock In \emph{NeurIPS}, pages 27198--27211, 2022.
\newblock \doi{10.48550/arXiv.2206.03126}.

\bibitem[Press et~al.(2021)Press, Smith, and Lewis]{press2021train}
O.~Press, N.~A. Smith, and M.~Lewis.
\newblock Train short, test long: Attention with linear biases enables input length extrapolation.
\newblock In \emph{ICLR}, 2021.
\newblock \doi{10.48550/arXiv.2108.12409}.

\bibitem[Qi et~al.(2025)Qi, Liu, Zhou, Pang, Du, Lee, and Lin]{qi2025defeating}
P.~Qi, Z.~Liu, X.~Zhou, T.~Pang, C.~Du, W.~S. Lee, and M.~Lin.
\newblock Defeating the training-inference mismatch via {FP16}.
\newblock \emph{arXiv}, art. 2510.26788, 2025.
\newblock \doi{10.48550/arXiv.2510.26788}.

\bibitem[Radford et~al.(2019)Radford, Wu, Child, Luan, Amodei, and Sutskever]{radford2019language}
A.~Radford, J.~Wu, R.~Child, D.~Luan, D.~Amodei, and I.~Sutskever.
\newblock Language models are unsupervised multitask learners.
\newblock Technical report, OpenAI, 2019.

\bibitem[Rice(1966)]{rice1966theory}
J.~R. Rice.
\newblock A theory of condition.
\newblock \emph{SIAM J Numer Anal}, 3\penalty0 (2):\penalty0 287--310, 1966.
\newblock \doi{10.1137/0703023}.

\bibitem[Riviere et~al.(2024)Riviere, Pathak, Sessa, Hardin, Bhupatiraju, Hussenot, Mesnard, Shahriari, Ram{\'e}, et~al.]{riviere2024gemma}
M.~Riviere, S.~Pathak, P.~G. Sessa, C.~Hardin, S.~Bhupatiraju, L.~Hussenot, T.~Mesnard, B.~Shahriari, A.~Ram{\'e}, et~al.
\newblock Gemma 2: Improving open language models at a practical size.
\newblock \emph{arXiv}, art. 2408.00118, 2024.
\newblock \doi{10.48550/arXiv.2408.00118}.

\bibitem[Rouhani et~al.(2023)Rouhani, Zhao, More, Hall, Khodamoradi, Deng, Choudhary, Cornea, Dellinger, Denolf, et~al.]{rouhani2023microscaling}
B.~D. Rouhani, R.~Zhao, A.~More, M.~Hall, A.~Khodamoradi, S.~Deng, D.~Choudhary, M.~Cornea, E.~Dellinger, K.~Denolf, et~al.
\newblock Microscaling data formats for deep learning.
\newblock \emph{arXiv}, art. 2310.10537, 2023.
\newblock \doi{10.48550/arXiv.2310.10537}.

\bibitem[Sakr et~al.(2017)Sakr, Kim, and Shanbhag]{sakr2017analytical}
C.~Sakr, Y.~Kim, and N.~Shanbhag.
\newblock Analytical guarantees on numerical precision of deep neural networks.
\newblock In \emph{ICML}, pages 3007--3016, 2017.
\newblock \doi{10.5555/3305890.3305992}.

\bibitem[Scao et~al.(2022)Scao, Fan, Akiki, Pavlick, Ili{\'c}, Hesslow, Castagn{\'e}, Luccioni, Yvon, et~al.]{scao2022bloom}
T.~L. Scao, A.~Fan, C.~Akiki, E.~Pavlick, S.~Ili{\'c}, D.~Hesslow, R.~Castagn{\'e}, A.~S. Luccioni, F.~Yvon, et~al.
\newblock {BLOOM}: A 176b-parameter open-access multilingual language model.
\newblock \emph{arXiv}, art. 2211.05100, 2022.
\newblock \doi{10.48550/arXiv.2211.05100}.

\bibitem[Shah et~al.(2024)Shah, Bikshandi, Zhang, Thakkar, Ramani, and Dao]{shah2024flashattention}
J.~Shah, G.~Bikshandi, Y.~Zhang, V.~Thakkar, P.~Ramani, and T.~Dao.
\newblock {FlashAttention-3}: Fast and accurate attention with asynchrony and low-precision.
\newblock In \emph{NeurIPS}, pages 68658--68685, 2024.
\newblock \doi{10.48550/arXiv.2407.08608}.

\bibitem[Shazeer(2019)]{shazeer2019fast}
N.~Shazeer.
\newblock Fast transformer decoding: One write-head is all you need.
\newblock \emph{arXiv}, art. 1911.02150, 2019.
\newblock \doi{10.48550/arXiv.1911.02150}.

\bibitem[Shazeer(2020)]{shazeer2020glu}
N.~Shazeer.
\newblock {GLU} variants improve transformer.
\newblock \emph{arXiv}, art. 2002.05202, 2020.
\newblock \doi{10.48550/arXiv.2002.05202}.

\bibitem[Shazeer et~al.(2017)Shazeer, Mirhoseini, Maziarz, Davis, Le, Hinton, and Dean]{shazeer2017outrageously}
N.~Shazeer, A.~Mirhoseini, K.~Maziarz, A.~Davis, Q.~Le, G.~Hinton, and J.~Dean.
\newblock Outrageously large neural networks: The sparsely-gated mixture-of-experts layer.
\newblock In \emph{ICLR}, 2017.
\newblock \doi{10.48550/arXiv.1701.06538}.

\bibitem[Su et~al.(2024)Su, Ahmed, Lu, Pan, Bo, and Liu]{su2024roformer}
J.~Su, M.~Ahmed, Y.~Lu, S.~Pan, W.~Bo, and Y.~Liu.
\newblock {RoFormer}: Enhanced transformer with rotary position embedding.
\newblock \emph{Neurocomputing}, 568:\penalty0 127063, 2024.
\newblock \doi{10.1016/j.neucom.2023.127063}.

\bibitem[Sze et~al.(2017)Sze, Chen, Yang, and Emer]{sze2017efficient}
V.~Sze, Y.-H. Chen, T.-J. Yang, and J.~S. Emer.
\newblock Efficient processing of deep neural networks: A tutorial and survey.
\newblock \emph{Proc IEEE}, 105\penalty0 (12):\penalty0 2295--2329, 2017.
\newblock \doi{10.1109/JPROC.2017.2761740}.

\bibitem[Tsipras et~al.(2019)Tsipras, Santurkar, Engstrom, Turner, and Madry]{tsipras2018robustness}
D.~Tsipras, S.~Santurkar, L.~Engstrom, A.~Turner, and A.~Madry.
\newblock Robustness may be at odds with accuracy.
\newblock In \emph{ICLR}, 2019.
\newblock \doi{10.48550/arXiv.1805.12152}.

\bibitem[Vaswani et~al.(2017)Vaswani, Shazeer, Parmar, Uszkoreit, Jones, Gomez, Kaiser, and Polosukhin]{vaswani2017attention}
A.~Vaswani, N.~Shazeer, N.~Parmar, J.~Uszkoreit, L.~Jones, A.~N. Gomez, L.~Kaiser, and I.~Polosukhin.
\newblock Attention is all you need.
\newblock In \emph{NIPS}, pages 5998--6008, 2017.
\newblock \doi{10.48550/arXiv.1706.03762}.

\bibitem[Walsh et~al.(2025)Walsh, Soldaini, Groeneveld, Lo, Arora, Bhagia, Gu, Huang, Jordan, et~al.]{walsh20252}
P.~Walsh, L.~Soldaini, D.~Groeneveld, K.~Lo, S.~Arora, A.~Bhagia, Y.~Gu, S.~Huang, M.~Jordan, et~al.
\newblock 2 {OLMo} 2 furious.
\newblock \emph{arXiv}, art. 2501.00656, 2025.
\newblock \doi{10.48550/arXiv.2501.00656}.

\bibitem[Wang et~al.(2024)Wang, Ma, Dong, Huang, Zhang, and Wei]{wang2024deepnet}
H.~Wang, S.~Ma, L.~Dong, S.~Huang, D.~Zhang, and F.~Wei.
\newblock {DeepNet}: Scaling transformers to 1,000 layers.
\newblock \emph{IEEE Trans Pattern Anal Mach Intell}, 46\penalty0 (10):\penalty0 6761--6774, 2024.
\newblock \doi{https://doi.org/10.1109/TPAMI.2024.3386927}.

\bibitem[Weng et~al.(2018)Weng, Zhang, Chen, Yi, Su, Gao, Hsieh, and Daniel]{weng2018evaluating}
T.-W. Weng, H.~Zhang, P.-Y. Chen, J.~Yi, D.~Su, Y.~Gao, C.-J. Hsieh, and L.~Daniel.
\newblock Evaluating the robustness of neural networks: An extreme value theory approach.
\newblock In \emph{ICLR}, 2018.
\newblock \doi{10.48550/arXiv.1801.10578}.

\bibitem[Xiao et~al.(2023)Xiao, Lin, Seznec, Wu, Demouth, and Han]{xiao2023smoothquant}
G.~Xiao, J.~Lin, M.~Seznec, H.~Wu, J.~Demouth, and S.~Han.
\newblock {SmoothQuant}: Accurate and efficient post-training quantization for large language models.
\newblock In \emph{ICML}, pages 38087--38099, 2023.
\newblock \doi{10.48550/arXiv.2211.10438}.

\bibitem[Xiao et~al.(2024)Xiao, Tian, Chen, Han, and Lewis]{xiao2024efficient}
G.~Xiao, Y.~Tian, B.~Chen, S.~Han, and M.~Lewis.
\newblock Efficient streaming language models with attention sinks.
\newblock In \emph{ICLR}, 2024.
\newblock \doi{10.48550/arXiv.2309.17453}.

\bibitem[Xiong et~al.(2020)Xiong, Yang, He, Zheng, Zheng, Xing, Zhang, Lan, Wang, and Liu]{xiong2020layer}
R.~Xiong, Y.~Yang, D.~He, K.~Zheng, S.~Zheng, C.~Xing, H.~Zhang, Y.~Lan, L.~Wang, and T.~Liu.
\newblock On layer normalization in the transformer architecture.
\newblock In \emph{ICML}, pages 10524--10533, 2020.
\newblock \doi{10.48550/arXiv.2002.04745}.

\bibitem[Yang et~al.(2024)Yang, Yang, Zhang, Hui, et~al.]{yang2024qwen25}
A.~Yang, B.~Yang, B.~Zhang, B.~Hui, et~al.
\newblock {Qwen2.5} {Technical} report.
\newblock \emph{arXiv}, art. 2412.15115, 2024.
\newblock \doi{10.48550/arXiv.2412.15115}.

\bibitem[Ying et~al.(2021)Ying, Cai, Luo, Zheng, Ke, He, Wang, and Liu]{ying2021transformers}
C.~Ying, T.~Cai, S.~Luo, S.~Zheng, G.~Ke, D.~He, Y.~Wang, and T.-Y. Liu.
\newblock Do transformers really perform bad for graph representation?
\newblock In \emph{NeurIPS}, pages 28877--28888, 2021.
\newblock \doi{10.48550/arXiv.2106.05234}.

\bibitem[Yudin et~al.(2025)Yudin, Gaponov, Kudriashov, and Rakhuba]{yudin2025pay}
N.~Yudin, A.~Gaponov, S.~Kudriashov, and M.~Rakhuba.
\newblock Pay attention to attention distribution: A new local {Lipschitz} bound for transformers.
\newblock \emph{arXiv}, art. 2507.07814, 2025.
\newblock \doi{10.48550/arXiv.2507.07814}.

\bibitem[Zeidler(1995)]{zeidler1995applied}
E.~Zeidler.
\newblock \emph{Applied Functional Analysis: Main Principles and their Applications}.
\newblock Springer, 1995.
\newblock \doi{10.1007/978-1-4612-0821-1}.

\bibitem[Zhang and Sennrich(2019)]{zhang2019root}
B.~Zhang and R.~Sennrich.
\newblock Root mean square layer normalization.
\newblock In \emph{NeurIPS}, volume~32, 2019.
\newblock \doi{10.48550/arXiv.1910.07467}.

\bibitem[Zhang et~al.(2022)Zhang, Roller, Goyal, Artetxe, Moya, Hwang, Rusak, Lin, Myle, et~al.]{zhang2022opt}
S.~Zhang, S.~Roller, N.~Goyal, M.~Artetxe, S.~Moya, T.~Hwang, K.~Rusak, X.~V. Lin, O.~Myle, et~al.
\newblock {OPT}: Open pre-trained transformer language models.
\newblock \emph{arXiv}, art. 2205.01068, 2022.
\newblock \doi{10.48550/arXiv.2205.01068}.

\end{thebibliography}

\appendix
\renewcommand{\theHsection}{A\arabic{section}}

\section{Comparison of transformer architectures across generations of LLMs}
\label{appendix:llm_comparison}

Architectural details presented in the tables below are based on technical reports and verified against open-source implementations (Hugging Face).
A diverse selection of open-source models is sorted by release date.
Note that Falcon 2 \citep{malartic2024falcon2} is a parallel architecture.
We describe the choice of layer normalisation (Table~\ref{tab:LLM-LN}), FF mechanism (Table~\ref{tab:LLM-FFN}), and attention mechanism (Table~\ref{tab:LLM-ATTN}).
In Table~\ref{tab:LLM-sizes}, the model sizes are presented; note that some models violate the standard design principles described in Section~\ref{sec:background}.
Additionally, we present in Table~\ref{tab:LLM-sizes} the weight-scaling used at initialisation.

\begin{table}[th]
\caption{Layer normalisation in LLMs}
\label{tab:LLM-LN}
\begin{tabular}{@{}lcccc@{}}
\toprule
Model & Variant & Type & Gain & Bias \\
\midrule
Vanilla \citep{vaswani2017attention}  & $\Layer{LN}$ & Post & + & + \\
BERT \citep{devlin2019bert}           & $\Layer{LN}$ & Post & + & + \\
GPT-2 \citep{radford2019language}     & $\Layer{LN}$ & Pre  & + & + \\
OPT \citep{zhang2022opt}              & $\Layer{LN}$ & Pre  & + & + \\
BLOOM \citep{scao2022bloom}           & $\Layer{LN}$ & Pre  & + & + \\
Mixtral 8x7B \citep{jiang2024mixtral} & $\Layer{RN}$ & Pre  & + & - \\
Llama 3 \citep{grattafiori2024llama}  & $\Layer{RN}$ & Pre  & + & - \\
Falcon 2 \citep{malartic2024falcon2}  & $\Layer{LN}$ & Pre  & + & + \\
Gemma 2 \citep{riviere2024gemma}      & $\Layer{RN}$ & Pre \& Post-Sublayer & + & - \\
Qwen2.5 \citep{yang2024qwen25}        & $\Layer{RN}$ & Pre  & + & - \\
OLMo 2 \citep{walsh20252}             & $\Layer{RN}$ & Post-Sublayer & + & - \\
DeepSeek-v3 \citep{liu2024deepseek}   & $\Layer{RN}$ & Pre  & + & - \\
Ministral 3 \citep{liu2026mistral}    & $\Layer{RN}$ & Pre  & + & - \\
\botrule
\end{tabular}
\end{table}

\begin{table}[th]
\caption{Feedforward mechanism in LLMs}
\label{tab:LLM-FFN}
\begin{tabular}{@{}lccc@{}}
\toprule
Model & Variant & Type & Bias \\
\midrule
Vanilla \citep{vaswani2017attention}  & ReLU   & Dense & + \\
BERT \citep{devlin2019bert}           & GELU   & Dense & + \\
GPT-2 \citep{radford2019language}     & GELU   & Dense & + \\
OPT \citep{zhang2022opt}              & ReLU   & Dense & + \\
BLOOM \citep{scao2022bloom}           & GELU   & Dense & + \\
Mixtral 8x7B \citep{jiang2024mixtral} & SwiGLU & MoE   & - \\
Llama 3 \citep{grattafiori2024llama}  & SwiGLU & Dense & - \\
Falcon 2 \citep{malartic2024falcon2}  & SwiGLU & Dense & - \\
Gemma 2 \citep{riviere2024gemma}      & GeGLU  & Dense & - \\
Qwen2.5 \citep{yang2024qwen25}        & SwiGLU & Dense & - \\
OLMo 2 \citep{walsh20252}             & SwiGLU & Dense & - \\
DeepSeek-v3 \citep{liu2024deepseek}   & SwiGLU & MoE   & - \\
Ministral 3 \citep{liu2026mistral}    & SwiGLU & Dense & - \\
\botrule
\end{tabular}
\end{table}

\begin{table}[th]
\caption{Attention mechanism in LLMs}
\label{tab:LLM-ATTN}
\begin{tabular}{@{}lccc@{}}
\toprule
Model & Variant & Pos. Enc. & Bias  \\
\midrule
Vanilla \citep{vaswani2017attention}  & Standard      & Abs.  & + \\
BERT \citep{devlin2019bert}           & Standard      & Abs.  & + \\
GPT-2 \citep{radford2019language}     & Standard      & Abs.  & + \\
OPT \citep{zhang2022opt}              & Standard      & Abs.  & + \\
BLOOM \citep{scao2022bloom}           & Standard      & ALiBi & + \\
Mixtral 8x7B \citep{jiang2024mixtral} & Grouped-Query & RoPE  & - \\
Llama 3 \citep{grattafiori2024llama}  & Grouped-Query & RoPE  & - \\
Falcon 2 \citep{malartic2024falcon2}  & Grouped-Query & RoPE  & - \\
Gemma 2 \citep{riviere2024gemma}      & Grouped-Query & RoPE  & - \\
Qwen2.5 \citep{yang2024qwen25}        & Grouped-Query & RoPE  & + \\
OLMo 2 \citep{walsh20252}             & Grouped-Query & RoPE  & - \\
DeepSeek-v3 \citep{liu2024deepseek}   & Latent        & RoPE  & - \\
Ministral 3 \citep{liu2026mistral}    & Grouped-Query & RoPE  & - \\
\botrule
\end{tabular}
\end{table}

\begin{table}[th]
\caption{Sizes and initialisation scaling of LLMs}
\label{tab:LLM-sizes}
\begin{tabular}{@{}lcccccc@{}}
\toprule
Model & $d$ & $D$ & $n_{head}$ & $d_{head}$ & $2L$ & Scaling of $\Matrix{W}_O$ and $\Matrix{W}_{down}$ \\
\midrule
Vanilla \citep{vaswani2017attention}  & 1024  & 4096  & 16  & 64  & 12  & None \\
BERT \citep{devlin2019bert}           & 1024  & 4096  & 16  & 64  & 48  & None \\
GPT-2 \citep{radford2019language}     & 1600  & 6400  & 25  & 64  & 96  & $1 / \sqrt{2L}$ \\
OPT \citep{zhang2022opt}              & 12288 & 49152 & 96  & 128 & 192 & $1 / \sqrt{2L}$ \\
BLOOM \citep{scao2022bloom}           & 14336 & 57344 & 112 & 128 & 140 & $1 / \sqrt{2L}$ \\
Mixtral 8x7B \citep{jiang2024mixtral} & 4096  & 14336 & 32  & 128 & 64  & $1 / \sqrt{2L}$ \\
Llama 3 \citep{grattafiori2024llama}  & 16384 & 53248 & 128 & 128 & 252 & $1 / \sqrt{2L}$ \\
Falcon 2 \citep{malartic2024falcon2}  & 4096  & 16384 & 32  & 128 & 120 & $1 / \sqrt{2L}$ \\
Gemma 2 \citep{riviere2024gemma}      & 4608  & 36864 & 32  & 128 & 92  & $1 / \sqrt{2L}$ \\
Qwen2.5 \citep{yang2024qwen25}        & 8192  & 29568 & 64  & 128 & 160 & $1 / \sqrt{2L}$ \\
OLMo 2 \citep{walsh20252}             & 5120  & 27648 & 40  & 128 & 128 & $1 / \sqrt{2L}$ \\
DeepSeek-v3 \citep{liu2024deepseek}   & 7168  & 2048  & 128 & 128 & 122 & $1 / \sqrt{2L}$ \\
Ministral 3 \citep{liu2026mistral}    & 5120  & 16384 & 32  & 128 & 80  & $1 / \sqrt{2L}$ \\
\botrule
\end{tabular}
\end{table}

\section{Layer normalisation: Condition number comparison in Subsection~\ref{subsec:layernorm_comparison}}
\label{appendix:layernorm-cond-comparison}

The expressions of condition numbers below are derived based on Theorems~\ref{theorem:cond-rms-layernorm} and~\ref{theorem:cond-layernorm}.
Recall that we set $\Matrix{g} = g \One$ for the two massive-outlier examples.
We mark with $\dagger$ the formulas where the row-maximum in the definition of the corresponding condition number is attained at the outlier index $i = 1$.

\subsection{Massive outlier with zero-variance background}
The exact condition numbers of the $\Layer{RN}$ normalisation function \eqref{eq:rms-layernorm} are
\begin{gather*}
    \cond{\infty, \infty}{\Layer{RN}, \Matrix{x}} = 1 + \begin{cases}
        \frac{(d-3) \alpha^2 + |\alpha|}{1 + (d-1) \alpha^2 + \epsilon}, \quad & |\alpha| \leq \tfrac{1}{d-3}, \\
        \frac{(d-1) |\alpha| - 1}{1 + (d-1) \alpha^2 + \epsilon}, \quad & \text{otherwise},^\dagger
    \end{cases}\\
    \cond{\infty,c}{\Layer{RN},\Matrix{x}} = \frac{1}{|\alpha|} + \frac{1 + (d-3) |\alpha|}{1 + (d-1) \alpha^2 + \epsilon}, \\
    \cond{c,\infty}{\Layer{RN},\Matrix{x}} = \begin{cases}
        2 |\alpha| \frac{1 + (d-2) \alpha^2 + \epsilon/2}{1 + (d-1) \alpha^2 + \epsilon}, & |\alpha| < \frac{1}{d-2}~\text{and}~\epsilon < 2 |\alpha| (1 - (d-2) |\alpha|), \\
        \frac{2(d-1) \alpha^2 + \epsilon}{1 + (d-1) \alpha^2 + \epsilon}, & \text{otherwise},^\dagger
    \end{cases}\\
    \cond{c,c}{\Layer{RN}, \Matrix{x}} = 2 \frac{1 + (d-2) \alpha^2 + \epsilon/2}{1 + (d-1) \alpha^2 + \epsilon}.
\end{gather*}
The exact condition numbers of the $\Layer{LN}$ normalisation function \eqref{eq:layernorm} are
\begin{gather*}
    \cond{\infty,\infty}{\Layer{LN}, \Matrix{x}} = \frac{2d}{(d-1)(1-\alpha)} \frac{(d-2)(1-\alpha)^2 + (d-1)\epsilon}{(d-1)(1-\alpha)^2 + d\epsilon}, \\
    \cond{\infty,c}{\Layer{LN}, \Matrix{x}} = (d-1) \cond{\infty,\infty}{\Layer{LN}, \Matrix{x}}, \\
    \cond{c,\infty}{\Layer{LN}, \Matrix{x}} = \frac{d}{(d-1)(1-\alpha)} \begin{cases}
        \frac{\epsilon + |\alpha| \big[ 2(d-2)(1-\alpha)^2 + (2d-3) \epsilon \big]}{(d-1)(1-\alpha)^2 + d\epsilon}, & \epsilon \leq \frac{2 |\alpha| (1 - \alpha)^2}{1 - |\alpha|}, \\
        \frac{(d-1) (|\alpha| + 1) \epsilon}{(d-1)(1-\alpha)^2 + d\epsilon}, & \text{otherwise},^\dagger
    \end{cases} \\
    \cond{c,c}{\Layer{LN}, \Matrix{x}} = \frac{d}{1-\alpha} \frac{\epsilon + |\alpha| \big[ 2(d-2)(1-\alpha)^2 + (2d-3) \epsilon \big]}{(d-1)(1-\alpha)^2 + d\epsilon}.
\end{gather*}

\subsection{Massive outlier with zero-mean background}
The condition numbers of $\Layer{RN}$ are identical to the respective formulas from the zero-variance example.
To present the condition numbers of $\Layer{LN}$, we introduce auxiliary
\begin{gather*}
    \epsilon_\ast(\alpha) = (d-1) (\alpha - \alpha^2), \\
    P_{\infty,\infty}^{(1)}(\alpha) = d(d^2 - 4d + 1)\alpha^2 - d(d^2 - 2d - 3)\alpha + 2d(d-2) + d(d-1)\epsilon, \\
    P_{\infty,\infty}^{(2)}(\alpha) = d(d^2 - 6d + 3)\alpha^2 - d(d^2 - 4d + 7)\alpha + 2d(d-2) + d(d-3)\epsilon, \\
    P_{\infty,\infty}^{(3)}(\alpha) = (d-1)\alpha^2 - (d-5)\alpha + \epsilon, \\
    P_{c,\infty}(\alpha) = (2d^2 - 7d + 3)\alpha^3 - (2d^2 - 4d - 2)\alpha^2 + [3d - 5 + \epsilon(2d-3)]\alpha - \epsilon(d-2).
\end{gather*}
Then
\begin{gather*}
    \cond{\infty,\infty}{\Layer{LN}, \Matrix{x}} = \begin{cases}
        1 + \frac{(d-1) (d\alpha-1)}{d-1 + d(d-1)\alpha^2 + d\epsilon}, & \epsilon \leq \epsilon_\ast(\alpha)~\text{and}~P_{\infty,\infty}^{(1)}(\alpha) < 0~\text{and}~P_{\infty,\infty}^{(2)}(\alpha) < 0,^\dagger \\
        \frac{2(d-2)}{d-1} + \frac{2(d\alpha + 1)(d - 2 -d\alpha)}{(d-1) (d-1 + d(d-1)\alpha^2 + d\epsilon)}, & \epsilon \leq \epsilon_\ast(\alpha)~\text{and}~P_{\infty,\infty}^{(2)}(\alpha) \geq 0~\text{and}~P_{\infty,\infty}^{(3)}(\alpha) < 0, \\
        2 - \frac{2(d\alpha - 1)^2}{(d-1) (d-1 + d(d-1)\alpha^2 + d\epsilon)}, & \text{otherwise},
    \end{cases}\\
    \cond{\infty,c}{\Layer{LN}, \Matrix{x}} = 
        \left| \frac{2(d-1)}{1-d\alpha} - \frac{2(1-d\alpha)}{d-1 + d(d-1)\alpha^2 + d\epsilon} \right|, \\
    \cond{c,\infty}{\Layer{LN}, \Matrix{x}} = \begin{cases}
        (1 + \alpha) \left( 1 - \frac{d-1}{d-1 + d(d-1)\alpha^2 + d\epsilon} \right), & \epsilon > \epsilon_\ast(\alpha),^\dagger \\
        1 - \frac{(d-1)(1 - d\alpha^2)}{d-1 + d(d-1)\alpha^2 + d\epsilon}, & \epsilon \leq \epsilon_\ast(\alpha)~\text{and}~P_{c,\infty}(\alpha) < 0,^\dagger \\
        \frac{1 + \alpha(2d-3)}{d-1} + \frac{(d\alpha-1)(d-1 + 2\alpha + d(d-3)\alpha^2)}{(d-1)(d-1 + d(d-1)\alpha^2 + d\epsilon)}, & \text{otherwise},
    \end{cases}\\
    \cond{c,c}{\Layer{LN}, \Matrix{x}} = \left| \frac{1 + \alpha(2d-3)}{1 - d\alpha} - \frac{d-1 + 2\alpha + d(d-3)\alpha^2}{d-1 + d(d-1)\alpha^2 + d\epsilon} \right|.
\end{gather*}

\end{document}